\documentclass[11pt]{amsart}
\usepackage{hyperref}
\usepackage{fullpage}
\usepackage{dsfont}
\usepackage{enumitem}
\usepackage{tikz,color}
\usetikzlibrary{shapes.misc}

 \newtheorem{theorem}{Theorem}[section]
 \newtheorem{corollary}[theorem]{Corollary}

 \newtheorem{lemma}[theorem]{Lemma}

 \newtheorem{proposition}[theorem]{Proposition}
 
 \theoremstyle{definition}

 \newtheorem{definition}[theorem]{Definition}
 
 \newtheorem{remark}[theorem]{Remark}

\definecolor{mygreen}{rgb}{0.1,0.8,0.1}
\definecolor{mygray}{rgb}{0.7,0.7,0.7}

\newcommand{\beq}{\begin{equation}}
\newcommand{\eeq}{\end{equation}}

\newcommand{\beqn}{\begin{eqnarray}}
\newcommand{\eeqn}{\end{eqnarray}}

\newcommand{\Confign}{\mathrm{Config}\left( W_n \right)}
\newcommand{\Stablen}{\mathrm{Stable}\left( W_n \right)}
\newcommand{\SSMRecn}{\mathrm{SSMRec}\left( W_n \right)}
\newcommand{\ASMRecn}{\mathrm{ASMRec}\left( W_n \right)}

\newcommand{\SSMMinRecn}{\mathrm{SSMMinRec}\left( W_n \right)}
\newcommand{\ASMMinRecn}{\mathrm{ASMMinRec}\left( W_n \right)}

\newcommand{\Subn}{\mathrm{Sub}\left( C_n \right)}

\newcommand{\level}{\mathrm{level}}
\newcommand{\w}{\mathrm{weight^{01^*}}}

\newcommand{\OO}{\mathcal{O}}
\newcommand{\OM}{\dot{\mathcal{O}}}

\newcommand{\orighta}{\xrightarrow{\OO}}
\newcommand{\olefta}{\xleftarrow{\OO}}

\newcommand{\PMO}{\mathrm{PMO}\left(C_n \right)}
\newcommand{\PMW}{\mathrm{PMW}}

\newcommand{\T}{\mathrm{Topp}}

\newcommand{\Z}{\mathbb{Z}}
\newcommand{\Zp}{\mathbb{Z}_+}
\newcommand{\N}{\mathbb{N}}

\newcommand{\latticegrid}{
\foreach \x in {0,1,2}
  \foreach \y in {0,1}
    \draw [fill=mygray, color=mygray] (\x,\y) circle [radius=0.1];
\draw [fill] (0,0) circle [radius=0.1];
\draw [fill] (2,1) circle [radius=0.1];
}

\hyphenation{
con-fi-gu-ra-tion 
cor-res-pon-dence 
cor-res-pon-ding 
ob-ser-va-tion 
cha-rac-te-ri-sa-tions 
ac-cor-ding 
Foun-da-tion
}


\begin{document}

\title{Combinatorial aspects of sandpile models on wheel and fan graphs}
\author{Thomas Selig}
\address{Department of Computing, School of Advanced Technology, Xi'an Jiaotong-Livepool University} 
\email{Thomas.Selig@xjtlu.edu.cn}
\date{\today}

\begin{abstract}
We study combinatorial aspects of the sandpile model on wheel and fan graphs, seeking bijective characterisations of the model's recurrent configurations on these families. 
For wheel graphs, we exhibit a bijection between these recurrent configurations and the set of subgraphs of the cycle graph which maps the level of the configuration to the number of edges of the subgraph. 
This bijection relies on two key ingredients. The first consists in considering a stochastic variant of the standard Abelian sandpile model (ASM), rather than the ASM itself. The second ingredient is a mapping from a given recurrent state to a canonical minimal recurrent state, exploiting similar ideas to previous studies of the ASM on complete bipartite graphs and Ferrers graphs. 
We also show that on the wheel graph with $2n$ vertices, the number of recurrent states with level $n$ is given by the first differences of the central Delannoy numbers. 
Finally, using similar tools, we exhibit a bijection between the set of recurrent configurations of the ASM on fan graphs and the set of subgraphs of the path graph containing the right-most vertex of the path. We show that these sets are also equinumerous with certain lattice paths, which we name Kimberling paths after the author of the corresponding entry in the Online Encyclopedia of Integer Sequences.
\end{abstract}

\maketitle

%


\section{Introduction}\label{sec:intro}

The sandpile model is a dynamic process whereby grains of sand move around on a graph. Informally (a formal introduction is given in Section~\ref{sec:prelim}), grains of sand are placed on the vertices of the graph (we call this a \emph{configuration}). 
At each step of time, a vertex $v$ is chosen at random, and a grain of sand is added to $v$. If this causes the number of grains at $v$ to exceed a certain threshold (typically the degree of $v$), $v$ is said to be unstable, and topples, sending grains of sand to its neighbours according to some toppling rules. 
This may cause other vertices to become unstable, and topple in turn. There is an exit point for the system called the sink vertex, which absorbs grains, and so the process eventually stabilises. 
The dynamics of the sandpile model are given by this ``add grain at random, and stabilise'' operation.

In the classical so-called Abelian sandpile model (ASM), the toppling rules referred to above are simple (and deterministic): when a vertex topples, it sends one grain to each of its neighbours in the graph. Later in this section we will discuss stochastic variants of this model. 
The ASM was originally introduced on 2-dimensional grids by Bak, Tang and Wiesenfeld in the 1980's~\cite{BTW1, BTW2} as a model exhibiting a property known as \emph{self-organised criticality}. This means that over time, a system will converge to a critical state without needing to fine-tune any of its external parameters. 
The model was then generalised by Dhar~\cite{Dhar1}, who showed the following Abelian property from which its name is derived. If $a_v$ denotes the operator ``add a grain of sand at vertex $v$ and stablilise'', then the operators $(a_v)$ commute, i.e. $a_v a_w = a_w a_v$ for all vertices $v,w$. 
In graph theory, the model is sometimes referred to as the \emph{chip-firing game} (see e.g.~\cite{BLS}). For a recent extensive review of known results on this model, we refer the reader to Klivans's excellent book on the subject~\cite{Kliv}.

Of central importance in ASM research are the so-called \emph{recurrent configurations}, those that appear infinitely often in the long-time running of the model (in chip-firing terminology, these are called \emph{critical configurations}). 
Dhar~\cite{Dhar1} showed that the set of recurrent configurations forms an Abelian group when equipped with the addition operation ``vertex-pointwise addition and stablisation''. He named this the sandpile group of the graph. 
This group, also known as the critical group, graph Jacobian, or Picard group, has been widely studied in algebraic graph and matroid theory. Its most celebrated contribution is perhaps its role in developing discrete Riemann-Roch and Abel-Jacobi theory on graphs~\cite{Baker}. 
The structure of the sandpile group has also been explicitly computed on a variety of graph families, including complete graphs~\cite{Cori}, nearly-complete graphs (complete graphs with a cycle removed)~\cite{Zhou}, wheel graphs~\cite{Biggs1} (see also~\cite{Raza2, Raza1}), Cayley graphs of the dihedral group $D_n$~\cite{DFF}, and many more (see e.g.~\cite{Chen} and the references therein for a more complete list).

In enumerative and bijective combinatorics, a fruitful direction of recent ASM research has also focused on graph families, but instead of looking at the sandpile group, the focus has been on calculating the set of recurrent configurations via bijections to other more easily enumerated or generated combinatorial objects. 
Examples of graph families and related combinatorial objects are:
\begin{itemize}[topsep=2pt, noitemsep]
\item complete graphs~\cite{Cori}, corresponding to parking functions;
\item complete multi-partite graphs with a dominating sink~\cite{CorPou}, corresponding to generalised parking functions (called $(p_1,\ldots,p_k)$-parking functions);
\item complete bipartite graphs where the sink is in one of the two components~\cite{DLB}, corresponding to (labelled) parallelogram polyominoes (see also~\cite{AD, AADB, ALB});
\item complete split graphs~\cite{Dukes}, correspondences to the so-called \emph{tiered} parking functions and Motzkin words;
\item Ferrers graphs~\cite{DSSS1, SSS}, corresponding to (decorated) EW-tableaux (see Section~\ref{subsec:minrec} for more details);
\item permutation graphs~\cite{DSSS2}, corresponding to tiered trees and non-ambiguous binary trees.
\end{itemize}

In this paper, we study combinatorial aspects (in the above sense) of the sandpile model on wheel and fan graphs. 
We show that the recurrent configurations on these graphs are related to a variety of combinatorial objects: subgraphs of the cycle or path graphs, marked words/orientations, and two families of lattice paths known as Delannoy paths and Kimberling paths. 
In the wheel graph case, to exhibit the desired correspondences, it is better to consider a stochastic variant of the standard ASM, where the toppling rules of unstable vertices are random. 
In this model, instead of sending a grain to each neighbour, an unstable vertex chooses a random subset of neighbours to send grains to, and keeps the remaining grains. 
There are a number of such stochastic sandpile models in the literature, although on the whole they have not been nearly as widely studied as the classical Abelian model.
\begin{itemize}
\item In~\cite{Manna}, there are two different types of grain, which cannot occupy the same vertex. When they do, one of the grains is moved to a randomly chosen neighbouring vertex instead.
\item In~\cite{DharSad}, unstable vertices lose all their grains, which are re-distributed at random to their neighbours: some neighbours may receive more than one grain, others none, while some grains may exit the system directly.
\item In ~\cite{CMS}, unstable vertices flip a coin for each neighbour to decide which neighbours to send grains to. That is, all neighbours independently of each other receive a grain with probability $p  \in (0,1)$ (with probability $(1-p)$ that grain is kept by the toppling vertex).
\item The model in~\cite{Nunzi} essentially generalises the two previous models in~\cite{CMS, DharSad}.
\item In~\cite{KW}, the toppling threshold of a vertex is set to a (fixed) multiple $M$ of its degree. For each toppling, a random number $\gamma \in \{1,\ldots,M\}$ is chosen, and each neighbour of the toppling vertex receives the same (random) number $\gamma$ of grains.
\end{itemize}
The model that proves to be the most useful for our purposes here is the ``coin-flipping'' model~\cite{CMS} (see Section~\ref{subsec:SSM_prelim} for a more formal definition of this model).

The paper is organised as follows. In Section~\ref{sec:prelim}, we formally introduce the sandpile models (Abelian and stochastic) on general graphs. 
We give some important results on the sandpile model, specifically on characterisations of its recurrent configurations. 
In Section~\ref{sec:sandpile_wheel} we focus on the sandpile model on wheel graphs. Our main result is exhibiting a bijection between the set of recurrent configurations and the set of subgraphs of the cycle graph (Theorem~\ref{thm:bij_rec_subgraph}). 
In Section~\ref{sec:delannoy} we focus on the specific case of wheel graphs with an even number of vertices, and show that the recurrent configurations whose total number of grains is one-and-a-half times the number of vertices are enumerated by the first differences of the so-called Delannoy numbers (Theorem~\ref{thm:delannoy}). 
In Section~\ref{sec:sandpile_fan}, we study the sandpile model on fan graphs. Our main results are that their recurrent configurations are in bijection with certain subgraphs of the path graph (Theorem~\ref{thm:rec_fan_subgraph_path}), and are equinumerous with certain lattice paths (Theorem~\ref{thm:rec_fan_kimb}), which we call Kimberling paths after the name of their author in the OEIS~\cite{OEIS}.
Finally, Section~\ref{sec:conc} summarises the main results of our papers, and gives some suggestions for future work.


\section{Preliminaries}\label{sec:prelim}

In this section, we introduce some of the necessary definitions, tools and notation we will need and use throughout the rest of the paper. As usual, the sets $\Z$ and $\N$ denote the sets of integers and (strictly) positive integers respectively. We let $\Zp:=\N \, \cup \, \{0\}$ denote the set of non-negative integers. For a positive integer $n \in \N$, we denote $[n]$, resp.\ $[n]_0$, the set $\{1,\ldots,n\}$, resp.\ $\{0,\ldots,n\}$.

Throughout this section, a \emph{graph} $G$ is a labelled, simple, undirected, connected graph with vertex set $V \subseteq [n]_0$ for some $n$. The edge set $E$ is therefore finite, and may not contain multiple edges or loops. 
For $i \in V$, we write $\dgr[i] = \dgr[i]^G$ for the degree of the vertex $i$ in $G$, omitting the superscript where the underlying graph is unambiguous. 
A \emph{subgraph} of $G$ is a pair $(A, E_A)$ where $A$ is a non-empty subset of $V$, and $E_A$ a subset of $E$ containing only (some of the) edges with both endpoints in $A$. We write $\Sub[G]$ for the set of subgraphs of a graph $G$.

The \emph{path} graph $P_n$ is the simple path graph with vertex set $[n]$, that is the edges are the pairs $\{i,i+1\}$ for $i \in [n-1]$. 
The \emph{cycle} graph $C_n$ is the simple cycle graph with vertex set $[n]$, obtained by adding an edge $\{n,1\}$ to $P_n$ to complete the cycle. 
The \emph{wheel} graph $W_n$ is constructed by taking the \emph{join} of the cycle graph $C_n$ and of a single isolated vertex $0$ (that is, the vertex $0$ is connected to all vertices in $C_n$). 
The \emph{fan} graph $F_n$ is constructed by removing the edge $\{n,1\}$ from $W_n$, or equivalently by taking the join of the path graph $P_n$ and of a single isolated vertex. 
We refer to vertex $0$ as the \emph{sink} of $W_n$ and $F_n$. Figure~\ref{fig:cycle_wheel_fan} shows an example of cycle, wheel and fan graphs, with the sink vertex represented as a square, and other vertices as circles. 

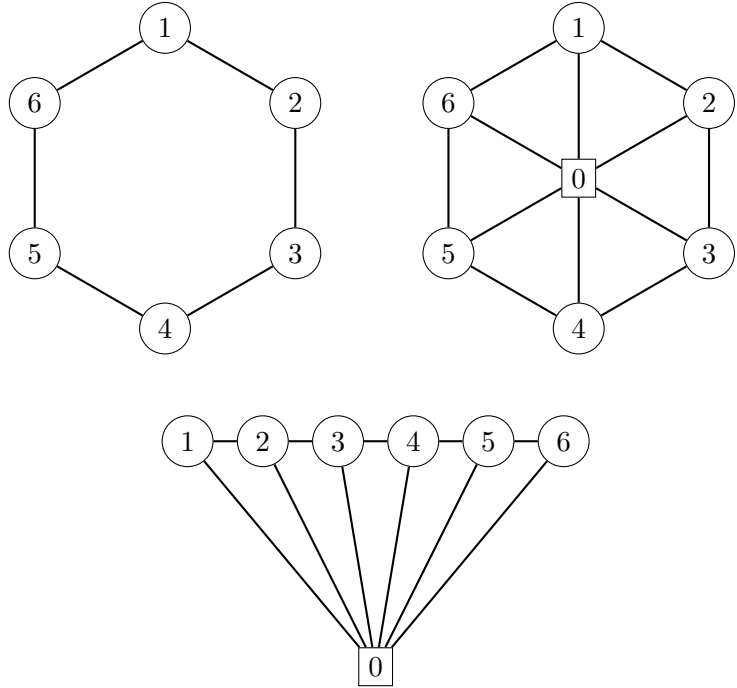
\begin{figure}[ht]
\centering
\begin{tikzpicture}

\node[circle, draw=black] (1) at (0,2) {$1$};
\node[circle, draw=black] (2) at ({sqrt(3)},1) {$2$};
\node[circle, draw=black] (3) at ({sqrt(3)},-1) {$3$};
\node[circle, draw=black] (4) at (0,-2) {$4$};
\node[circle, draw=black] (5) at (-{sqrt(3)},-1) {$5$};
\node[circle, draw=black] (6) at (-{sqrt(3)},1) {$6$};
\draw[thick] (1)--(2)--(3)--(4)--(5)--(6)--(1);

\begin{scope}[xshift=5.5cm]
\node[rectangle, draw=black] (0) at (0,0) {$0$};
\node[circle, draw=black] (1) at (0,2) {$1$};
\node[circle, draw=black] (2) at ({sqrt(3)},1) {$2$};
\node[circle, draw=black] (3) at ({sqrt(3)},-1) {$3$};
\node[circle, draw=black] (4) at (0,-2) {$4$};
\node[circle, draw=black] (5) at (-{sqrt(3)},-1) {$5$};
\node[circle, draw=black] (6) at (-{sqrt(3)},1) {$6$};
\draw[thick] (1)--(2)--(3)--(4)--(5)--(6)--(1);
\foreach \x in {1,...,6}
	\draw[thick] (0)--(\x);
\end{scope}

\begin{scope}[xshift=1.3cm,yshift=-5cm]
\node[rectangle, draw=black] (0) at (1.5,-1.5) {$0$};
\foreach \x in {1,...,6}
    \node[circle, draw=black] (\x) at (-2+\x,1.5) {$\x$};
\foreach \x in {1,...,6}
    \draw[thick] (0)--(\x);
\draw [thick] (1)--(2)--(3)--(4)--(5)--(6);
\end{scope}

\end{tikzpicture}

\caption{The cycle graph $C_6$ (top left), the wheel graph $W_6$ (top right), and the fan graph $F_6$ (bottom centre).
\label{fig:cycle_wheel_fan}
}

\end{figure}

An \emph{orientation} $\OO$ of the graph $G$ is the assignment of a direction to each edge in $E$. For an orientation $\OO$, and an edge $e=\{i,j\} \in E$, we write $i \orighta j$ to denote that the edge $e$ is directed from $i$ to $j$ in $\OO$. For $i \in V$, we denote $\In[i]$, resp.\ $\Out[i]$, the number of incoming, resp.\ outgoing, edges at $i$ in $\OO$.

An orientation is \emph{acyclic} if it contains no directed cycles. A \emph{target}, resp.\ \emph{source}, of an orientation is a vertex where all edges are incoming, resp.\ outgoing\footnote{The terminology ``sink'' is more widespread in the literature than ``target'', but the sandpile model already has a designated vertex called ``sink'', hence our choice of terminology here.}. It is straightforward to verify that an acyclic orientation contains at least one target and one source. For a given vertex $i \in V$, an orientation is said to be \emph{$i$-rooted} if the vertex $i$ is the orientation's unique target. 

\begin{remark}\label{rem:sink-rooted_or_wheel_fan}
A $0$-rooted orientation of the wheel graph $W_n$ can be mapped straightforwardly to an orientation of the cycle graph $C_n$ by removing the sink $0$ and its incident edges. This correspondence is clearly one-to-one. 
Similarly, we have a one-to-one correspondence between $0$-rooted orientations of the fan graph $F_n$ and orientations of the path graph $P_n$.
\end{remark}

\subsection{The Abelian sandpile model (ASM)}\label{subsec:ASM_prelim}

In this part we introduce the Abelian sandpile model (ASM) on a general graph $G$, and recall some important results regarding the so-called \emph{recurrent configurations} of the model. 
Let $G$ be a graph with vertex set $[n]_0$. As in the wheel and fan graph cases, we refer to $0$ as the \emph{sink} of the graph.

A \emph{configuration} on $G$ is a vector $c=(c_1,\ldots ,c_n) \in \Zp^n$ that assigns the number $c_i$ to vertex $i$. 
We think of $c_i$ as representing the number of grains of sand at the vertex $i$. 
Note that the sink vertex $0$ is not assigned a number of grains.
Denote by $\Config[G]$ the set of all configurations on $G$.
Let $\alpha_i \in \Zp^n$ be the configuration with $1$ in the $i$-th position and $0$ elsewhere. By convention, $\alpha_0$ is the all-0 configuration.

We say that a vertex $i$ in a configuration $c=(c_1,\ldots ,c_n)\in \Config[G]$ is \emph{stable} if $c_i < \dgr[i]$. 
Otherwise it is called \emph{unstable}. 
A configuration is called stable if all its (non-sink vertices) are stable, and we denote $\Stable[G]$ the set of all stable configurations on $G$.

Unstable vertices topple. 
We define the \emph{toppling operator} $\T_i$ corresponding to the toppling of an unstable vertex $i \in [n]$ in a configuration $c \in \Config[G]$ by:
\beq\label{eq:toppling}
\T_i(c) := c - \dgr[i] \alpha_i + \sum_{j: \{i,j\} \in E} \alpha_j,
\eeq
where the sum is over all vertices $j$ adjacent to $i$ in $G$, and the addition operator on configurations denotes pointwise addition at each vertex. In words, the toppling of a vertex $i$ sends one grain of sand to each neighbour of $i$ in $G$.

Performing this toppling may cause other vertices to become unstable, and we topple these also. 
One can show (see e.g.~\cite[Section~5.2]{Dhar}) that starting from some unstable configuration $c$ and toppling successively unstable vertices, we eventually reach a stable configuration $c'$ (think of the sink as absorbing grains). 
In addition, the configuration $c'$ reached does not depend on the sequence in which vertices are toppled. 
We call this $c'$ the \emph{stabilisation} of $c$.

We now define a Markov chain on the set $\Stable[G]$ of stable configurations.
Fix a probability distribution $\mu=(\mu_1,\dots,\mu_n)$ on $[n]$ such that $\mu_i>0$ for all $i \in [n]$.
At each step of the Markov chain we add a grain at the vertex $i$ with probability $\mu_i$ and stabilise the resulting configuration.

The \emph{recurrent} configurations are those appear infinitely often in the long-time running of this Markov chain. We let $\ASMRec[G]$ be the set of recurrent configurations for the ASM on the graph $G$. 
We use the notation $\ASMRec[G]$ is to mark the difference between configurations which are recurrent for the ASM and those for the SSM (stochastic sandpile model) which will be introduced in Section~\ref{subsec:SSM_prelim}.

The study of the recurrent configurations has been of central importance in ASM research. There are a number of ways to characterise them. Perhaps most famously Dhar's burning algorithm provides a straightforward algorithmic process to check if a given stable configuration is recurrent or not \cite[Section~6.2]{Dhar}. 
The result we make use of in this paper however is given in terms of so-called \emph{compatible} orientations. This result was first stated in these terms by Biggs \cite{Biggs2}, although the author credits a previous paper \cite{GZ} as having equivalent results.

Given a configuration $c = (c_1,\ldots,c_n) \in \Config[G]$ and an orientation $\OO$ of $G$, we say that $\OO$ and $c$ are \emph{compatible} if
\beq\label{eq:comp_orient_config}
\forall i \in [n], c_i \geq \In[i].
\eeq
If $\OO$ and $c$ are compatible, we will say that $\OO$ is compatible with $c$, or simply that $\OO$ is compatible if there is no ambiguity over which configuration $c$ is considered.

\begin{theorem}\label{thm:DR_AcOr}
Let $c = (c_1,\ldots,c_n) \in \Stable[G]$ be a \emph{stable} configuration on the graph $G$. Then $c$ is recurrent for the ASM if, and only if, there exists an \emph{acyclic, $0$-rooted} orientation $\OO$ compatible with $c$.
\end{theorem}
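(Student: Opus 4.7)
The plan is to use Dhar's burning algorithm as the bridge between the two characterisations of recurrence. Recall that this test, alluded to in the paragraph preceding the theorem, proceeds by initially burning only the sink $0$ and thereafter burning any vertex $v$ possessing at least $\deg_v - c_v$ already-burned neighbours; a stable $c$ is recurrent if and only if every non-sink vertex eventually gets burned.

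For the direction ($\Leftarrow$), suppose $\OO$ is acyclic, $0$-rooted and compatible with $c$. Since $\OO$ is acyclic with unique target $0$, it admits a topological ordering $v_1,\ldots,v_n,0$ in which every directed edge goes from lower to higher index. I would perform the burning procedure in the reverse order $0,v_n,v_{n-1},\ldots,v_1$. When $v_k$'s turn comes, its already-burned neighbours are precisely those joined to $v_k$ by an $\OO$-edge directed out of $v_k$ (either to some $v_j$ with $j>k$, or to $0$), so there are $\mathrm{out}^{\OO}_{v_k}$ of them. The burning threshold $\mathrm{out}^{\OO}_{v_k}\geq \deg_{v_k}-c_{v_k}$ rearranges, via $\deg_{v_k}=\mathrm{in}^{\OO}_{v_k}+\mathrm{out}^{\OO}_{v_k}$, to $c_{v_k}\geq \mathrm{in}^{\OO}_{v_k}$, which is exactly the compatibility condition \eqref{eq:comp_orient_config}. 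Hence every non-sink vertex burns and $c$ is recurrent.

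For the direction ($\Rightarrow$), suppose $c$ is recurrent and record the burning order as $0=w_0,w_1,\ldots,w_n$. Orient each edge from its later-burned endpoint to its earlier-burned endpoint, and call this orientation $\OO$. Acyclicity is immediate, since $\OO$ respects the linear burning order. The $0$-rooted property holds because every edge at $0$ is incoming (as $0$ burns first), while for each $w_k\neq 0$ stability $c_{w_k}<\deg_{w_k}$ forces the burning test to require at least one already-burned neighbour, giving $w_k$ an outgoing edge and so preventing it from being a target. For compatibility, observe that at the moment $w_k$ burned, its yet-unburned neighbours---which are exactly its in-neighbours in $\OO$---number at most $\deg_{w_k}-(\deg_{w_k}-c_{w_k})=c_{w_k}$, so $\mathrm{in}^{\OO}_{w_k}\leq c_{w_k}$.

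The main point requiring care is the appeal to Dhar's burning test itself, which the preliminaries invoke but do not prove. A self-contained alternative would work directly from the Markov-chain definition: recurrence is equivalent to the statement that adding one grain at each neighbour of $0$ (a ``sink firing'') and stabilising returns to $c$ while firing every non-sink vertex exactly once. The order in which these firings occur then plays the role of the burning order above, and the identical bookkeeping $\deg_v=\mathrm{in}^{\OO}_v+\mathrm{out}^{\OO}_v$ together with the compatibility inequality translates the two characterisations into each other in both directions.
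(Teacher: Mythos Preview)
The paper does not actually prove Theorem~\ref{thm:DR_AcOr}; it is quoted as a known result, attributed to Biggs~\cite{Biggs2} (who in turn credits~\cite{GZ}). So there is no ``paper's own proof'' to compare against.

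Your argument is correct and is essentially the standard proof of this equivalence. Both directions hinge on the same bookkeeping identity $\deg_v = \mathrm{in}^{\OO}_v + \mathrm{out}^{\OO}_v$, translating the burning threshold into the compatibility inequality~\eqref{eq:comp_orient_config}. A couple of minor points worth making explicit: in the $(\Leftarrow)$ direction you are not \emph{prescribing} the burning order but rather exhibiting one order in which every vertex becomes burnable; since burnability is monotone under further burnings, this suffices to conclude that the burning test succeeds. In the $(\Rightarrow)$ direction, the burning order need not be unique, but any choice yields a compatible acyclic $0$-rooted orientation, which is all the statement requires. Your closing remark that one could bypass the burning criterion and argue directly from the ``fire the sink and stabilise'' characterisation is accurate and would make the proof self-contained, though the paper is content to take Dhar's criterion as given.
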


\subsection{A stochastic variant of the ASM}\label{subsec:SSM_prelim}

In this section, we introduce a stochastic variant of the ASM, called the stochastic sandpile model (SSM). We are more informal than in the previous section, as most of the notions introduced closely mirror those of the ASM. For a more formal definition of the SSM, see~\cite{CMS}. 

In the ASM, the only randomness in the model concerns the choice of which vertex to add a grain to at each step of the Markov chain (i.e. the distribution $\mu$ in Section~\ref{subsec:ASM_prelim}), while the subsequent operations -- topplings and thus stabilisation -- are deterministic.

In the SSM, we introduce an additional layer of randomness by making the topplings random. 
Fix some parameter $p \in (0,1)$. When we have an unstable vertex $i$, we flip a biased coin for each neighbour $j$ of $i$, and with probability $p$ one grain is sent from $i$ to $j$ (with probability $(1-p)$ the grain remains at $i$). 
The coin flips are independent for each neighbour $j$, and independent of other vertex topplings.

In~\cite[Theorem 2.2]{CMS} it is shown that, as in the ASM, starting from some unstable configuration $c$ and successively toppling unstable vertices, we eventually reach a stable configuration $c'$.
Moreover, this configuration $c'$ does not depend on the sequence in which vertices are toppled. 

As for the ASM, we can therefore define a Markov chain for the SSM on the set of stable configurations $\Stable[G]$, whose dynamics are given by:
\begin{enumerate}
\item Add a grain at a randomly chosen vertex according to some probability distribution $\mu=(\mu_1,\dots,\mu_n)$.
\item Stabilise the resulting configuration. The stabilisation operation here is random, according to the toppling rules described above.
\end{enumerate}
We denote the set of recurrent configurations for the SSM Markov chain $\SSMRec[G]$, to distinguish from the recurrent configurations of the standard ASM. The following was shown, in slightly different but equivalent form, in~\cite[Theorem 3.2]{CMS}.

\begin{theorem}\label{thm:SR_Or}
Let $c = (c_1,\ldots,c_n) \in \Stable[G]$ be a \emph{stable} configuration on the graph $G$. Then $c$ is recurrent for the SSM if, and only if, there exists a \emph{$0$-rooted} orientation $\OO$ compatible with $c$.
\end{theorem}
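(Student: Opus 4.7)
My plan is to prove Theorem~\ref{thm:SR_Or} by adapting the proof of the corresponding ASM result (Theorem~\ref{thm:DR_AcOr}), using the extra flexibility of the SSM's stochastic topplings to relax the acyclicity requirement on the compatible orientation. The result is essentially a reformulation of~\cite[Theorem~3.2]{CMS}, which characterises SSM-recurrence via a stochastic analogue of Dhar's burning algorithm; my proof reduces to establishing the equivalence between this burning criterion and the compatible-orientation condition above.

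For the $(\Rightarrow)$ direction, I start from the burning-algorithm criterion of~\cite{CMS}: if $c$ is SSM-recurrent, there is an ordering $v_1, \ldots, v_n$ of the non-sink vertices such that each $v_k$ can be ``burned'' given the burned set $B_k = \{0, v_1, \ldots, v_{k-1}\}$, subject to a ``slack'' condition relating $c_{v_k}$ to the number of edges from $v_k$ to the unburned set $U_k = V \setminus B_k$. I construct the desired orientation $\OO$ from this sequence by directing every sink-incident edge toward $0$ and, for each non-sink edge $\{v_i,v_j\}$ with $i<j$, orienting $v_j \orighta v_i$. At burning steps where the slack is tight, one previously-unoriented edge of $v_k$ is re-designated as outgoing from $v_k$ in order to preserve the compatibility inequality $c_{v_k} \geq \In[v_k]$. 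The main technical subtlety here is choosing the outgoing-edge designations without conflicts at later burning steps, which I handle by a greedy argument.

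For the $(\Leftarrow)$ direction, given a compatible $0$-rooted orientation $\OO$, I verify that the stochastic burning algorithm of~\cite{CMS} terminates successfully on $c$ by showing, at each step, that some unburned vertex $v \in U$ satisfies the burning condition. This is established by a double-counting argument: compatibility gives $\sum_{v\in U} c_v \geq |\{\OO\text{-edges internal to }U\}|$, and the $0$-rootedness of $\OO$ (together with the connectivity of $G$) ensures that at least one vertex of $U$ has an outgoing $\OO$-edge leaving $U$. Together these imply the existence of a vertex in $U$ for which the burning condition holds, and iterating this argument yields a full burning sequence, hence SSM-recurrence of $c$. The main obstacle throughout is that the burning condition and the orientation-compatibility condition are related but not identical: translating between them requires careful bookkeeping both at the level of individual vertices (especially at tight burning steps) and globally (ensuring the chosen edge-designations are mutually consistent across the whole sequence).
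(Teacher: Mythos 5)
The paper does not prove this theorem at all: it cites \cite[Theorem~3.2]{CMS} as the source, stating that the result was shown there ``in slightly different but equivalent form''. Your proposal therefore cannot be compared to an in-paper argument, but it has a genuine gap of its own. The entire structure rests on a ``stochastic burning-algorithm criterion of \cite{CMS}'' with a ``slack condition'', which you never state precisely and which, according to the paper itself, does not exist: in Section~\ref{subsec:rec_wheel_easy} the author explicitly writes that Dhar's burning algorithm characterises ASM recurrence but ``no such algorithm exists for the SSM'', which is precisely why the orientation characterisation of Theorem~\ref{thm:SR_Or} is used in its place. What \cite[Theorem~3.2]{CMS} actually provides is (an equivalent form of) the orientation characterisation you are trying to prove, so a proof that starts from a purported burning criterion in that same reference is either circular or built on an invented intermediate result. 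Without a precise statement of the burning condition, neither direction of your argument can be checked.

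Even granting some burning-type criterion, the two places where real work would be required are exactly the places you defer. In the $(\Rightarrow)$ direction, the ``re-designation'' of edges at tight burning steps and the claim that a ``greedy argument'' avoids conflicts across the whole sequence is the entire technical content of that direction, and it is asserted rather than proved; it is not clear that a locally greedy choice cannot strand a later vertex with too many incoming edges. In the $(\Leftarrow)$ direction, the double-counting step produces only an inequality summed over the unburned set $U$, and extracting a single vertex of $U$ satisfying a pointwise burning condition from an aggregate inequality requires an argument (the average being large does not immediately locate a burnable vertex under an unspecified condition). A sound route here is instead to work directly with the SSM dynamics: for $(\Leftarrow)$, use the compatible $0$-rooted orientation to exhibit an explicit positive-probability sequence of grain additions and topplings returning to $c$ (the orientation tells each toppling vertex which neighbours to send grains to), and for $(\Rightarrow)$ propagate a compatible orientation backwards through a toppling sequence. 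That is essentially the content of \cite[Theorem~3.2]{CMS}, and citing it, as the paper does, is the honest form of this proof.
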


\begin{remark}\label{rem:diff_SR_DR}
The only difference between Theorems~\ref{thm:DR_AcOr} and \ref{thm:SR_Or} lies in the acyclicity (or not) of the compatible orientation. This immediately implies that $\ASMRec[G] \subseteq \SSMRec[G]$. 
In general, the converse doesn't hold. In Section~\ref{sec:sandpile_wheel} we will see that on the wheel graph $W_n$ there is exactly one additional recurrent configuration for the SSM, which is the configuration $(1,1,\ldots,1)$ with one grain of sand at each vertex.
\end{remark}

\subsection{A partial order on recurrent configurations}\label{subsec:partial_order_rec}

We now introduce a partial order on the set of recurrent configurations. Most of what we write here is valid for both the ASM and SSM, so we don't specify which model is being considered. 
We will simply denote by $\Rec[G]$ the set of recurrent configurations for the sandpile model on the graph $G$.

There is a natural partial order $\preceq$ on configurations of the sandpile model, defined as follows. If $c,c' \in \Config[G]$, we let $c \preceq c'$ if, and only if, $c_i \leq c'_i$ for all vertices $i \in [n]$. 
In other words $c'$ can be obtained from $c$ through a succession of grain additions. The following is an immediate consequence of Theorems~\ref{thm:DR_AcOr} and \ref{thm:SR_Or}.

\begin{proposition}\label{pro:po_rec}
Let $c \in \Rec[G]$ be a recurrent configuration, and $c' \in \Stable[G]$ a stable configuration satisfying $c \preceq c'$. Then $c'$ is recurrent.
\end{proposition}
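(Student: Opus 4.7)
The plan is to observe that the compatibility condition~\eqref{eq:comp_orient_config} is monotone in the configuration: if $\OO$ is compatible with $c$ and $c \preceq c'$, then $\OO$ is automatically compatible with $c'$ as well. From there, the proposition follows directly from the orientation-based characterisations of recurrence in Theorem~\ref{thm:DR_AcOr} (ASM case) or Theorem~\ref{thm:SR_Or} (SSM case).

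More concretely, I would proceed as follows. First, since $c \in \Rec[G]$ is recurrent, apply the appropriate theorem to obtain a $0$-rooted orientation $\OO$ of $G$ (acyclic in the ASM case) satisfying $c_i \geq \In[i]$ for all $i \in [n]$. Second, use the hypothesis $c \preceq c'$, which by definition gives $c'_i \geq c_i$ for every $i \in [n]$, to conclude that
\[
c'_i \;\geq\; c_i \;\geq\; \In[i] \qquad \text{for all } i \in [n],
\]
so the same orientation $\OO$ is compatible with $c'$. Third, since $c'$ is assumed stable, the converse direction of Theorem~\ref{thm:DR_AcOr} (respectively Theorem~\ref{thm:SR_Or}) applied to $c'$ with the witnessing orientation $\OO$ shows that $c'$ is recurrent.

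There is essentially no obstacle here: the content of the statement is entirely absorbed into the orientation-based characterisation, and the proof reduces to a one-line monotonicity remark. The only thing worth emphasising is that the orientation $\OO$ certifying recurrence of $c$ can be reused verbatim for $c'$ without modification; in particular, in the ASM case its acyclicity is preserved trivially (it is the same orientation), so the argument works uniformly for both models introduced in Sections~\ref{subsec:ASM_prelim} and~\ref{subsec:SSM_prelim}.
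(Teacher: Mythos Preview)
Your proposal is correct and matches the paper's own reasoning exactly: the paper states the proposition as ``an immediate consequence of Theorems~\ref{thm:DR_AcOr} and \ref{thm:SR_Or}'' without writing out any further details, and your argument simply spells out the obvious monotonicity of the compatibility condition~\eqref{eq:comp_orient_config} that makes this immediate.
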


We let $\MinRec[G]$ denote the set of minimal recurrent configurations on $G$ for the partial order $\preceq$, using the notation $\ASMMinRec[G]$ and $\SSMMinRec[G]$ when we need to specify that we are considering the ASM or SSM respectively. 
In words, a configuration is minimal recurrent if it is recurrent and removing a single grain from any vertex would cause it to no longer be so. These minimal recurrent configurations will play a crucial role in the rest of our paper.

\begin{theorem}\label{thm:MR_Or}
Let $c \in \Stable[G]$ be a stable configuration on $G$. Then $c$ is minimal recurrent for the ASM, resp.\ for the SSM, if, and only if, there exists an acyclic $0$-rooted, resp.\ $0$-rooted, orientation $\OO$ of $G$ such that:
\beq\label{eq:char_minrec_general_orientation}
\forall i \in [n], \, c_i = \In[i].
\eeq
Moreover, in the ASM case, such an orientation is unique.
\end{theorem}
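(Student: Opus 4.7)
The plan is to derive both the ``if'' and ``only if'' directions from the compatibility-orientation characterisations (Theorems~\ref{thm:DR_AcOr} and~\ref{thm:SR_Or}), with the reverse direction supplemented by a simple counting identity, and then establish uniqueness in the ASM case via an Eulerian/acyclicity argument on a symmetric difference of orientations.

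For the forward direction, suppose $c$ is minimal recurrent (for either model), so by Theorem~\ref{thm:DR_AcOr} or~\ref{thm:SR_Or} there is an appropriate $0$-rooted orientation $\OO$ (acyclic in the ASM case) with $c_i \geq \In[i]$ for all $i \in [n]$. If some inequality were strict at a vertex $j$, then the configuration $c - \alpha_j$ would still be stable and still compatible with the same $\OO$, so still recurrent, contradicting minimality. Hence $c_i = \In[i]$ for all $i$.

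For the reverse direction, assume $c_i = \In[i]$ for some such orientation $\OO$. First I would check that $c$ is stable: since $\OO$ is $0$-rooted and $i \neq 0$, the vertex $i$ is not the unique target, so it has at least one outgoing edge in $\OO$, giving $c_i = \In[i] \leq \dgr[i] - 1 < \dgr[i]$. Thus $c \in \Stable[G]$, and $\OO$ itself certifies that $c$ is recurrent via the relevant characterisation theorem. For minimality, I would use the counting identity
\[
\sum_{i \in [n]} c_i \;=\; \sum_{i \in [n]} \In[i] \;=\; |E| - \dgr[0],
\]
where the last equality holds because the $0$-rootedness of $\OO$ forces $\In[0] = \dgr[0]$. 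Any $0$-rooted orientation $\OO'$ of $G$ satisfies the same total, so any configuration compatible with such an orientation has weight at least $|E| - \dgr[0]$. Since $c - \alpha_j$ has weight strictly less, it cannot be compatible with any $0$-rooted orientation, hence by Theorem~\ref{thm:SR_Or} (and so a fortiori by Theorem~\ref{thm:DR_AcOr}) it is not recurrent. This gives minimality in both models simultaneously.

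The step I expect to be the main (though still quite manageable) obstacle is the uniqueness claim in the ASM case. Suppose $\OO_1$ and $\OO_2$ are two acyclic $0$-rooted orientations with $\In[i]^{\OO_1} = \In[i]^{\OO_2} = c_i$ for all $i \in [n]$; note that both must orient every edge incident to $0$ towards $0$, so they can only differ on edges of the induced subgraph on $[n]$. Let $H$ be the subgraph of $G$ consisting of edges on which $\OO_1$ and $\OO_2$ disagree, oriented according to $\OO_1$. The equality of in-degree sequences, together with the fact that edges outside $H$ contribute identically under both orientations, forces each vertex of $H$ to have equal in- and out-degree in $\OO_1\!\restriction_H$. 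Standard graph theory then gives that $H$ (if non-empty) contains a directed cycle under $\OO_1\!\restriction_H$, but this cycle is also a directed cycle of $\OO_1$, contradicting its acyclicity. Hence $H$ is empty and $\OO_1 = \OO_2$, completing the proof.
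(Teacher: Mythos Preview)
Your proof is correct and complete. The paper does not actually prove this theorem in-text: it simply cites \cite{Schulz} for the ASM case and asserts that the SSM case follows identically, adding only the side remark that in the SSM case the orientation is unique up to cycle flipping. Your argument is therefore more than the paper provides, and each step is sound: the forward direction is the obvious minimality contradiction, the reverse direction via the total-grain count $\sum_i c_i = |E| - \dgr[0]$ is exactly the level computation the paper records in Section~\ref{subsec:level}, and your uniqueness argument via the Eulerian symmetric difference is the standard one (it is essentially the same mechanism behind the cycle-flipping remark the paper makes for the SSM case, specialised to the acyclic setting where no nontrivial flip can exist). One very minor point: since the theorem already assumes $c \in \Stable[G]$, your stability check in the reverse direction is redundant, though harmless and in fact a pleasant observation that the hypothesis is automatic.
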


Theorem~\ref{thm:MR_Or} was proved in the ASM case in~\cite{Schulz}. The proof in the SSM case is identical, taking into account the fact that the compatible orientation from Theorem~\ref{thm:SR_Or} is not necessarily acyclic. 
We also note that in the SSM case, the compatible orientation is unique up to \emph{cycle flipping}. That is, if $c$ is minimal recurrent for the SSM, and $\OO$ and $\OO'$ are two orientations satisfying Equation~\eqref{eq:char_minrec_general_orientation}, then $\OO'$ can be obtained from $\OO$ by successively flipping a finite number of directed cycles. This is a consequence of a general result linking out- or in-degree sequences of orientations and cycle flips (see e.g.~\cite[Lemma~40]{Ber} and references therein).

\subsection{The level statistic and level polynomial}\label{subsec:level}

Summing the Compatibility Inequality~\eqref{eq:comp_orient_config} gives the following inequality on the total number of grains of a recurrent configuration (for either the ASM or SSM):
$$ \sum\limits_{i \in [n]} c_i \geq \sum\limits_{i \in [n]} \In[i] = \vert E \vert - \dgr[0],$$
since in the right-hand sum every edge other than those incident to the sink $0$ is counted exactly once. This naturally leads to the following definition of the \emph{level} statistic for a recurrent configuration $c$:
\beq\label{eq:def_level}
\level(c) := \sum\limits_{i \in [n]} c_i + \dgr[0] - \vert E \vert,
\eeq
which is a non-negative integer. Note that the level of a configuration is essentially its total number of grains (up to an additive constant that only depends on the underlying graph), and that if $c$ is minimal recurrent, then $\level(c) = 0$.

The \emph{level polynomial} of a graph $G$ is then defined by:
\beq\label{eq:def_level_poly}
\mathrm{Level}_G(x) := \sum\limits_{c \in \Rec[G]} x^{\level(c)},
\eeq
where the sum is over all recurrent configurations on $G$. When we need to specify whether we are considering the ASM or SSM, we will write $\mathrm{ASMLevel}_G$ or $\mathrm{SSMLevel}_G$, as needed.

A well-known result links the ASM level polynomial to the ubiquitous Tutte polynomial of a graph.

\begin{theorem}\label{thm:Tutte_level}
Let $T_G(x,y)$ denote the bivariate Tutte polynomial of a graph $G$. We have:
$$ T_G(1,x) = \mathrm{ASMLevel}_G(x).$$
\end{theorem}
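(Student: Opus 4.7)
The plan is to prove $T_G(1,x) = \mathrm{ASMLevel}_G(x)$ by combining the spanning tree activity expansion of the Tutte polynomial with a bijection between recurrent ASM configurations and spanning trees of $G$. Fix an arbitrary linear ordering of the edges of $G$. Tutte's classical formula reads
\[
T_G(x,y) \;=\; \sum_{T} x^{\mathrm{ia}(T)}\, y^{\mathrm{ea}(T)},
\]
where the sum runs over spanning trees $T$ of $G$, and $\mathrm{ia}(T), \mathrm{ea}(T)$ denote the internal and external activities of $T$ relative to the edge ordering. Setting $x = 1$ collapses the internal-activity contributions, so the result reduces to producing a bijection $\Phi : \mathrm{ASMRec}(G) \to \{\text{spanning trees of } G\}$ such that $\mathrm{level}(c) = \mathrm{ea}(\Phi(c))$ for every recurrent $c$.

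For this bijection I would use the classical construction of Cori and Le Borgne, which is a book-kept version of Dhar's burning algorithm. Starting from the sink, one iteratively burns the unburnt vertex $v$ of smallest label among those whose grains together with the edges from already-burnt neighbours suffice to make $v$ unstable; at each burning step, the edge joining $v$ to its smallest-labelled burnt neighbour is recorded, and the recorded edges form a spanning tree $\Phi(c)$. The ``excess'' grains at each vertex (those not used to trigger its burning) are then in bijection with the non-tree edges incident to it, and a careful accounting shows that these non-tree edges are precisely the externally active edges of $\Phi(c)$.

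The crux of the argument is verifying the statistic equality. By Theorem~\ref{thm:MR_Or}, the level measures exactly the number of grains in $c$ above the minimum recurrent threshold $|E| - \mathrm{deg}_0$, so the desired equality boils down to aligning the burning order with the fixed edge ordering in such a way that every ``excess'' grain corresponds to an externally active non-tree edge. This alignment is the main obstacle and requires a delicate induction on the burning process. An alternative route, better suited to the orientation-based machinery used elsewhere in this paper, would be to leverage Theorem~\ref{thm:DR_AcOr} to parameterise recurrent configurations by pairs $(\mathcal{O}, e)$ where $\mathcal{O}$ is an acyclic $0$-rooted orientation and $e$ is an excess vector satisfying $0 \le e_i \le \mathrm{deg}_i - 1 - \mathrm{in}^{\mathcal{O}}_i$, then canonicalise the choice of $\mathcal{O}$ to avoid overcounting, and match the resulting sum against the subgraph expansion $T_G(1,y) = \sum_{A \text{ connected spanning}} (y-1)^{|A|-|V|+1}$; this avoids activities entirely but replaces the difficulty with bookkeeping around the canonicalisation step.
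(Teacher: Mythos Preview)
The paper does not actually prove this theorem: immediately after the statement it writes ``This result was first proved by Merino following a conjecture by Biggs. Since then, bijective proofs have been given in [Ber, CLB], as well as in [DSSS2] in the specific case of permutation graphs.'' So there is no in-paper proof to compare against; the theorem is quoted from the literature.

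Your proposal is essentially a sketch of the Cori--Le~Borgne approach, which is precisely one of the references the paper cites for a bijective proof. In that sense your route is not different from what the paper points to. Two caveats on the sketch itself. First, your description of the Cori--Le~Borgne algorithm is not quite accurate: their construction is not ``burn the smallest-labelled eligible vertex and record the edge to the smallest burnt neighbour''; it is a tour of the edges around the sink in a fixed cyclic order, and it is this specific traversal that makes the level/external-activity correspondence go through. A naive smallest-label burning rule does \emph{not} in general send level to external activity, so the ``careful accounting'' and ``delicate induction'' you allude to would in fact fail with the rule as you stated it. Second, your alternative route via the connected-spanning-subgraph expansion of $T_G(1,y)$ and a canonical choice of compatible acyclic orientation is closer in spirit to Bernardi's proof (also cited by the paper), but the canonicalisation step is the entire content of that argument and cannot be waved away.

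In short: the paper treats the result as known and gives references; your proposal names the right bijective strategy but misstates its mechanics, and the parts you flag as ``the crux'' are exactly where a genuine proof would need the correct construction rather than the one you wrote down.
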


This result was first proved by Merino~\cite{Mer} following a conjecture by Biggs~\cite{Biggs2}. Since then, bijective proofs have been given in~\cite{Ber, CLB}, as well as in~\cite{DSSS2} in the specific case of permutation graphs. 
Note that the SSM level polynomial is known to satisfy a deletion-contraction relation that is very similar to that of the Tutte polynomial~\cite[Theorem~3.9]{CMS}.


\section{The sandpile model(s) on wheel graphs}\label{sec:sandpile_wheel}

In this section, we focus on the sandpile models (ASM and SSM) on wheel graphs. We first interpret Theorems~\ref{thm:DR_AcOr} and \ref{thm:SR_Or} in terms of the wheel graph $W_n$, giving straightforward characterisations of recurrent configurations. 
Then, we introduce one of the key ideas of our paper: the mapping from a recurrent configuration to a canonical minimal recurrent configuration. 
This allows us to exhibit a bijection from recurrent configurations of $W_n$ to \emph{properly-marked} orientations of $C_n$, which then yields the announced bijection to subgraphs of $C_n$.

Because of the cyclic symmetries inherent in the cycle and wheel graphs, it will be easier to mainly reason modulo $n$ on the vertex set $[n]$. 
For this, we should picture the graphs $C_n$ and $W_n$ as drawn on a plane, with vertices $1,2,\ldots,n$ in a clockwise cycle, as in Figure~\ref{fig:cycle_wheel_fan}. 
For two vertices $i,j \in [n]$, we write $[i,j]$ for the \emph{clockwise interval} from $i$ to $j$, that is $[i,j] := \{i, i+1, \ldots, j-1, j\}$ if $i \leq j$, and $[i,j] = \{i, i+1, \ldots, n\} \cup \{1, \ldots, j\}$ otherwise. We use similar notation for open and semi-open intervals.

\subsection{Straightforward characterisation of recurrent configurations on \texorpdfstring{$W_n$}{Wn}}\label{subsec:rec_wheel_easy}

In this part, we give some straightforward characterisations of recurrent configurations for the ASM and SSM on the wheel graphs $W_n$. 
Note that for the wheel graph $W_n$, we have $\Stablen = \{0,1,2\}^n$, i.e. the set of stable configurations is simply the set of words of length $n$ on $\{0,1,2\}$. 
Theorems~\ref{thm:DR_AcOr} and \ref{thm:SR_Or}, combined with Remark~\ref{rem:sink-rooted_or_wheel_fan}, immediately yield the following. 

\begin{proposition}\label{pro:wheel_rec_or}
Let $c = (c_1,\ldots,c_n) \in \{0,1,2\}^n$ be a \emph{stable} configuration on the wheel graph $W_n$. Then $c$ is recurrent for the SSM, resp.\ ASM, if, and only if, there exists an orientation, resp.\ acyclic orientation, of the cycle graph $C_n$, compatible with $c$.
\end{proposition}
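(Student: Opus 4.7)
The plan is to unpack Theorems~\ref{thm:DR_AcOr} and \ref{thm:SR_Or} in the specific setting of $W_n$, and then use Remark~\ref{rem:sink-rooted_or_wheel_fan} to transfer the characterisation from $0$-rooted orientations of $W_n$ to orientations of $C_n$. First I would verify the claim $\Stablen = \{0,1,2\}^n$ by noting that every non-sink vertex $i$ of $W_n$ has degree $3$ (two neighbours on the cycle, plus the sink), so stability means $c_i \in \{0,1,2\}$; this is a trivial warm-up needed only to justify the hypothesis of the proposition.

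For the main statement, I would fix a stable $c \in \{0,1,2\}^n$ and apply Theorem~\ref{thm:SR_Or} (resp.\ \ref{thm:DR_AcOr}): $c$ is SSM-recurrent (resp.\ ASM-recurrent) iff there exists a $0$-rooted (resp.\ acyclic $0$-rooted) orientation $\OO$ of $W_n$ with $c_i \geq \In[i]^{\OO}$ for all $i \in [n]$. Now I would invoke Remark~\ref{rem:sink-rooted_or_wheel_fan}: removing the sink $0$ and its incident spoke edges from such an $\OO$ gives an orientation $\OO'$ of $C_n$, and this map is a bijection between $0$-rooted orientations of $W_n$ and arbitrary orientations of $C_n$.

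The key observation is that this bijection preserves both compatibility and acyclicity. For compatibility: since $0$ is the unique target of $\OO$, every spoke edge $\{i,0\}$ is oriented $i \orighta 0$, hence contributes to $\Out[i]^{\OO}$ rather than $\In[i]^{\OO}$. Therefore the in-degree at $i$ in $\OO$ equals the in-degree at $i$ in the induced orientation $\OO'$ of $C_n$, so the inequality $c_i \geq \In[i]^{\OO}$ reads identically as the compatibility of $c$ with $\OO'$. For acyclicity: any directed cycle of $\OO$ must avoid $0$ (which has no outgoing edges), so directed cycles of $\OO$ are in bijection with directed cycles of $\OO'$, and $\OO$ is acyclic iff $\OO'$ is. Combining these two observations closes the equivalence in both the SSM and ASM cases.

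There is essentially no obstacle here; the statement is a direct specialisation of the general characterisation results to the wheel graph, where the only nontrivial ingredient is the routing of spokes forced by the $0$-rooted condition. If any care is needed, it is simply to spell out this in-degree identification so that the compatibility condition on $W_n$ translates verbatim to one on $C_n$.
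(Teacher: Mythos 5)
Your proof is correct and follows essentially the same route as the paper, which derives Proposition~\ref{pro:wheel_rec_or} directly from Theorems~\ref{thm:DR_AcOr} and~\ref{thm:SR_Or} together with the correspondence of Remark~\ref{rem:sink-rooted_or_wheel_fan}; you simply make explicit the two details the paper leaves implicit (that spokes directed into the sink do not contribute to $\In[i]$, and that directed cycles must avoid the sink).
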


The above proposition says that, on the wheel graph $W_n$, we can essentially ignore the sink in our study of the sandpile model. 
While this may seem straightforward, it is an important building block towards our main result. 
First, we show that recurrent configurations of the sandpile model on $W_n$ are given by words in $\{0,1,2\}^n$ satisfying simple conditions.

\begin{theorem}\label{thm:wheel_rec_words}
Let $c = (c_1,\ldots,c_n) \in \{0,1,2\}^n$ be a stable configuration on the wheel graph $W_n$. We have the following.
\begin{enumerate}
\item The configuration $c$ is recurrent for the SSM if, and only if, for all $i,j \in [n]$ such that $c_i = c_j = 0$, there exists $k \in (i,j)$ such that $c_k = 2$. By convention, if $i=j$, we let $(i,j) := [n] \setminus \{i\}$.
\item The configuration $c$ is recurrent for the ASM if, and only if, it is recurrent for the SSM and there exists $i \in [n]$ such that $c_i=2$.
\end{enumerate}
\end{theorem}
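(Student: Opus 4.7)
By Proposition~\ref{pro:wheel_rec_or}, in both parts we only need to decide whether there exists a (resp.\ acyclic) orientation $\OO$ of the cycle $C_n$ satisfying $\In[i] \leq c_i$ for every $i \in [n]$. Since every vertex of $C_n$ has degree $2$, the compatibility requirement is trivial when $c_i = 2$, reduces to ``$i$ is not a sink of $\OO$'' when $c_i = 1$, and forces ``$i$ is a source of $\OO$'' when $c_i = 0$. This simple dictionary is the starting point.

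For part~(1), forward direction, I pick two positions $i, j$ with $c_i = c_j = 0$ and a compatible orientation $\OO$. The source condition at $i$ and at $j$ forces the edge $\{i, i+1\}$ to point $i \to i+1$ and the edge $\{j-1, j\}$ to point $j \to j-1$. Walking clockwise along the arc from $i$ to $j$, if no ``flip'' ever occurs the clockwise chain $i \to i+1 \to i+2 \to \cdots$ eventually yields $j-1 \to j$, contradicting the source at $j$. So there is a first vertex $k \in (i,j)$ whose two neighbours on the arc both point toward it; for this $k$, $\In[k] = 2$, hence $c_k = 2$. (The case $i = j$ works identically by walking around the full cycle, which is why the convention $(i,i) = [n]\setminus\{i\}$ is exactly right.) For the backward direction I construct $\OO$ explicitly. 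If $c$ has no zero, I orient every edge clockwise; each vertex then has in-degree $1 \leq c_i$. If $c$ has zeros at positions $i_1, \ldots, i_m$ in clockwise order, the hypothesis lets me pick, for every cyclically consecutive pair $(i_a, i_{a+1})$, a vertex $k_a \in (i_a, i_{a+1})$ with $c_{k_a} = 2$; then I orient the arc from $i_a$ to $k_a$ all going toward $k_a$, so that each $i_a$ is a source, each $k_a$ a sink, and every other vertex has in-degree exactly $1$. Compatibility is routine.

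For part~(2), the forward direction combines two facts: an acyclic compatible orientation is in particular compatible (giving SSM-recurrence), and any acyclic orientation of $C_n$ must contain at least one sink (in-degree $2$), whose index $k$ forces $c_k = 2$. For the converse, suppose $c$ is SSM-recurrent with some $c_k = 2$. If $c$ has a zero at position $i$, then in any compatible orientation $\OO$ (which exists by part~(1)) the vertex $i$ is a source; since the only cyclic orientations of $C_n$ have no source, $\OO$ is automatically acyclic. If $c$ has no zero, I build the orientation by hand: make $k$ a sink by orienting both edges at $k$ inward, and orient all remaining $n-2$ edges (the path from $k+1$ clockwise to $k-1$) in the clockwise direction. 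Then $k+1$ is a source, $k$ is a sink, every other vertex has in-degree $1$, and $c_i \geq 1$ everywhere, so the orientation is compatible and acyclic.

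The main obstacle I anticipate is being careful with the $i = j$ case in part~(1) and with the boundary bookkeeping of the explicit orientations (consecutive pairs ``wrapping around'' the cycle, endpoints of arcs, and checking that the chosen $k_a$ lies strictly between the $i_a$'s so that the arcs partition the edge set); the rest is mechanical once the source/sink dictionary for $C_n$ is in place.
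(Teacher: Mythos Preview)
Your proof is correct and follows essentially the same approach as the paper: the source/sink dictionary for $C_n$, the ``first flip'' argument to find a sink between two sources, and the explicit orientation built from the zeros and chosen $2$'s are all identical. The only minor organizational difference is in the ASM converse, where the paper reduces the no-zero case to the zero case by replacing $c_{i-1}$ with $0$ and invoking Proposition~\ref{pro:po_rec}, whereas you handle the two cases directly (observing that any compatible orientation with a source on $C_n$ is automatically acyclic, and building an explicit acyclic orientation when $c$ has no zeros); both are equally valid and equally short.
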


In words, Theorem~\ref{thm:wheel_rec_words} states that the recurrent configurations on $W_n$ are the words in $\{0,1,2\}^n$ such that between a pair of vertices with no grains of sand (cyclically), there must always be at least one vertex with two grains. We will refer to this as the $02$-cycle condition. 
The ASM recurrent configurations have the additional condition that there must be at least one vertex with two grains of sand. From this, we immediately deduce that:
\beq\label{eq:ssm_asm_rec_wheel}
\SSMRecn = \ASMRecn \cup \{ (1,1,\ldots,1) \},
\eeq
as announced in Remark~\ref{rem:diff_SR_DR}.
This result is actually noted in ~\cite{Cori} in the ASM case, with a proof relying on Dhar's burning algorithm. Since no such algorithm exists for the SSM, we give a full proof here using the Characterisation Proposition~\ref{pro:wheel_rec_or}. 
This proof also shows how to construct a compatible orientation from a recurrent configuration, which will be needed later in the paper.

\begin{proof}
We first show that if a configuration is recurrent, then the $02$-cycle condition holds. 
Let $c = (c_1, \ldots, c_n)$ be a recurrent configuration, and $i,j \in [n]$ such that $c_i = c_j = 0$. Since $c$ is recurrent, there exists an orientation $\OO$ of $C_n$ compatible with $c$ by Proposition~\ref{pro:wheel_rec_or}. 
The Compatibility Condition \eqref{eq:comp_orient_config} implies $\In[i] = \In[j] = 0$, i.e. $i$ and $j$ are sources of $\OO$. In particular, this implies that $i \orighta i+1$ (the edge is directed clockwise) and $j \orighta j-1$ (the edge is directed counter-clockwise). 
We can therefore define $k := \min \{k' \in (i,j); \, k'+1 \orighta k' \}$. By construction we have $k+1 \orighta k$ and $k-1 \orighta k$, i.e. $k$ is a target of the orientation $\OO$. 
By the Compatibility Condition \eqref{eq:comp_orient_config}, this implies that $c_k = 2$, as desired. Figure~\ref{fig:wheel_rec_words} provides an illustration of this part of the proof.

\begin{figure}[ht]

\centering

\begin{tikzpicture}

\node[circle, draw=black] (i) at ({sqrt(3)},1) {$i$};
\node[rounded rectangle, draw=black] (i+1) at (2,0) {$i+1$};
\node[rounded rectangle, draw=black] (k-1) at (1/2,-1.9) {$k-1$};
\node[circle, draw=black] (k) at (-1,-{sqrt(3)}) {$k$};
\node[rounded rectangle, draw=black] (k+1) at (-1.85,-3/4) {$k+1$};
\node[circle, draw=black] (j) at (-1/8,1.98) {$j$};
\node[rounded rectangle, draw=black] (j-1) at (-{sqrt(2)},{sqrt(2)}) {$j-1$};

\draw[thick,->] (i)--(i+1);
\draw[thick,->] (k-1)--(k);
\draw[thick,->] (k+1)--(k);
\draw[thick,->] (j)--(j-1);

\draw[thick,dashed,->] (i+1) to [out=-90, in=20] (k-1);
\draw[thick,dashed] (k+1) to [out=95, in=-110] (j-1);
\draw[thick,dashed] (j) to [out=0, in=120] (i);

\end{tikzpicture}

\caption{Illustration of the above proof. In the orientation $\OO$, all edges on the arc $[i,k]$ are directed clockwise, and $k$ is a target. Such a vertex must exist because $i \orighta i+1$ and $j \orighta j-1$. \label{fig:wheel_rec_words}}

\end{figure}
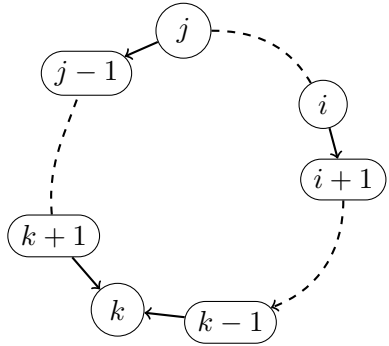

Now if $c$ is recurrent for the ASM, we know that in addition we can assume $\OO$ to be acyclic. Since an acyclic orientation must have at least one target say $i$, the Compatibility Condition~\eqref{eq:comp_orient_config} implies as above that $c_i = 2$, as desired.

\medskip

Let us now show the converse. Suppose that $c$ satisfies the $02$-cycle condition.  This implies that we either have $c = (1,\ldots,1)$ or there exists some $i \in [n]$ such that $c_i = 2$. If $c = (1, \ldots, 1)$ then the directed $n$-cycle orientation (in either direction) is clearly compatible with $c$, so that $c$ is recurrent for the SSM by Proposition~\ref{pro:wheel_rec_or}.

It remains to consider the case where $c_i = 2$ for some $i \in [n]$, and we will show that $c$ is recurrent for the ASM in this case (which implies that it is also recurrent for the SSM by Remark~\ref{rem:diff_SR_DR}). 
First, we show that we can assume that there exists $j \in [n]$ such that $c_j = 0$. Indeed, if this is not the case, we simply define $c'$ as $c'_{i-1} = 0$ and $c' = c$ elsewhere. 
The configuration $c'$ still satisfies the $02$-cycle condition, and since $c' \preceq c$, if we show that $c'$ is recurrent, this implies that $c$ is recurrent by Proposition~\ref{pro:po_rec}. 
Therefore, without loss of generality, we may assume that there exists $j \in [n]$ such that $c_j = 0$.

We now define an orientation $\OO$ of $C_n$ as follows. Set all vertices $j$ where $c_j = 0$ as sources of the orientation. 
Between two \emph{consecutive} sources $j < j'$, orient all edges clockwise from $j$ until we reach a first vertex $k$ with $c_k=2$ (such a vertex exists, since $c$ satisfies the $02$-cycle condition). 
Then orient all edges between $k$ and $j'$ counter-clockwise. By construction this orientation $\OO$ is compatible with $c$, and because it has at least one source it must be acyclic. It follows from Proposition~\ref{pro:wheel_rec_or} that $c$ is recurrent for the ASM, as desired.
\end{proof}

We now state a characterisation of \emph{minimal} recurrent configurations for the sandpile model on the wheel graph $W_n$.

\begin{theorem}\label{thm:wheel_min_rec_words}
Let $c = (c_1,\ldots,c_n) \in \{0,1,2\}^n$ be a stable configuration on the wheel graph $W_n$. Then $c$ is \emph{minimal} recurrent for the SSM if, and only if, for all $i,j \in [n]$ such that $c_i = c_j = 0$ and $c_{\ell} >0$ for all $\ell \in (i,j)$, there exists a \emph{unique} $k \in (i,j)$ such that $c_k = 2$. By convention, if $i=j$, we let $(i,j) := [n] \setminus \{i\}$. Moreover, $c$ is minimal recurrent for the ASM if, and only if, $c$ is minimal recurrent for the SSM and there exists $i \in [n]$ such that $c_i=2$.
\end{theorem}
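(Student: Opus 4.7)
The plan is to leverage the orientation characterisation from Theorem~\ref{thm:MR_Or} combined with Remark~\ref{rem:sink-rooted_or_wheel_fan}: a stable $c \in \{0,1,2\}^n$ is SSM minimal recurrent on $W_n$ if and only if there exists an orientation $\OO$ of the cycle $C_n$ with $c_i = \In[i]$ for every $i \in [n]$. Since every vertex of $C_n$ has degree two, $\In[i] \in \{0,1,2\}$, with these values corresponding respectively to $i$ being a source, an ``in--out'' vertex, or a target of $\OO$. So constructing or analysing such an orientation amounts to prescribing which vertices are sources (the zeros of $c$) and which are targets (the twos of $c$), the remaining vertices being ``in--out''.

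For the SSM forward direction, suppose $c$ is SSM minimal recurrent with compatible orientation $\OO$ of $C_n$, and fix cyclically consecutive zero-positions $i, j$. Because $i$ and $j$ are sources, we have $i \orighta i+1$ and $j \orighta j-1$. Reading the edges of the clockwise arc from $i$ to $j$, the first edge points clockwise and the last points counter-clockwise. At any interior vertex $\ell \in (i,j)$, a counter-clockwise edge followed by a clockwise edge would make $\ell$ a source, which is forbidden since $i$ and $j$ are cyclically consecutive zeros. Hence the sequence of edge orientations is a block of clockwise edges followed by a block of counter-clockwise ones, with a unique switch point. That switch point is an interior vertex with both incident edges pointing inwards, so it is the unique $k \in (i,j)$ with $c_k = 2$.

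For the converse, assume $c$ satisfies the unique-$2$ condition. If $c$ has no zero entries then necessarily $c = (1, \ldots, 1)$, and either cyclic orientation of $C_n$ gives $\In[i] = 1$ for all $i$. Otherwise, for each pair of cyclically consecutive zero-positions $i, j$, let $k$ be the unique $\ell \in (i, j)$ with $c_\ell = 2$; orient the arc $[i, k]$ clockwise and the arc $[k, j]$ counter-clockwise. Doing this for every such pair yields an orientation $\OO$ of $C_n$ with $\In[\ell] = c_\ell$ everywhere, and Theorem~\ref{thm:MR_Or} gives SSM minimal recurrence.

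Finally, the ASM statement reduces to the SSM statement via Theorem~\ref{thm:wheel_rec_words} and Equation~\eqref{eq:ssm_asm_rec_wheel}. The ``if'' implication is immediate: if $c$ is SSM minimal recurrent and contains a $2$, then $c$ is ASM recurrent by Theorem~\ref{thm:wheel_rec_words}, and no predecessor $c' \prec c$ can even be SSM recurrent, let alone ASM recurrent. I expect the main obstacle to be the ``only if'' implication: starting from an ASM minimal recurrent $c$, one must rule out a predecessor $c' \prec c$ that is SSM recurrent but not ASM recurrent. By~\eqref{eq:ssm_asm_rec_wheel} the only candidate is $c' = (1, \ldots, 1)$, which forces all entries of $c$ to lie in $\{1, 2\}$. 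A short case analysis on the number of $2$-entries of such a $c$ — removing a grain from a $2$-vertex when at least two $2$'s exist, or from a $1$-vertex when there is exactly one $2$ — produces a strictly smaller ASM recurrent configuration via Theorem~\ref{thm:wheel_rec_words}, contradicting ASM minimality and closing the proof.
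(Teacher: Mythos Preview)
Your proof is correct and takes a genuinely different route from the paper. The paper argues directly from the word characterisation (Theorem~\ref{thm:wheel_rec_words}): if $c$ is minimal recurrent, having two distinct $2$'s between consecutive $0$'s would allow a grain removal without breaking the $02$-cycle condition, contradicting minimality; conversely, the strict condition plus Theorem~\ref{thm:wheel_rec_words} gives recurrence, and any single grain removal manifestly breaks that condition. You instead invoke the orientation characterisation of Theorem~\ref{thm:MR_Or} and analyse in-degree patterns along each arc of $C_n$ between consecutive sources. Your approach is more structural and explicitly constructs the witnessing orientation in the converse direction, which the paper in fact needs immediately afterwards (see the discussion surrounding Figure~\ref{fig:config_or}); the paper's argument is shorter only because it recycles Theorem~\ref{thm:wheel_rec_words} wholesale. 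Your handling of the ASM clause is also more thorough than the paper's one-line deferral to Theorem~\ref{thm:wheel_rec_words}. One small wrinkle, shared with the paper's own proof: the assertion ``if $c$ has no zero entries then necessarily $c = (1, \ldots, 1)$'' does not follow from the stated hypothesis, since the unique-$2$ condition is vacuous when $c$ has no zeros; this is really an imprecision in the theorem statement rather than a flaw in either argument.
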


Theorem~\ref{thm:wheel_min_rec_words} states that the minimal recurrent configurations on $W_n$ are the words in $\{0,1,2\}^n$ such that between a pair of \emph{consecutive} vertices with no grains of sand (cyclically), there must always be \emph{exactly one} vertex with two grains. 
Equivalently, this means that cyclically around the wheel, each vertex with no grains must be followed by a vertex with two grains, possibly with some vertices with one grain in between, and vice versa. We will refer to this as the \emph{strict} $02$-cycle condition. 
The (minimal) recurrent configurations for the ASM simply have the additional condition that there must be at least one vertex with two grains of sand, or equivalently, for the minimal recurrent configurations, at least one vertex with no grain. As in the case of general recurrent configurations, we have:
$$ \SSMMinRecn = \ASMMinRecn \cup \{ (1,1,\ldots,1) \}.$$

\begin{proof}
Suppose first that $c$ is minimal recurrent. Since $c$ is recurrent, it must satisfy the (general) $02$-cycle condition. But if there were two distinct vertices with two grains of sand between two consecutive vertices with no grains, removing one grain from either of those two vertices would not break the $02$-cycle condition, which contradicts the minimality of $c$. So $c$ must satisfy the strict $02$-cycle condition, as desired.

Conversely, suppose that $c$ satisfies the strict $02$-cycle condition. By Theorem~\ref{thm:wheel_rec_words}, $c$ is recurrent. But now, removing any grain from $c$ would break the $02$-cycle condition by producing a pair of vertices with no grain and no vertex with two grains in between. Thus, no grain can be removed from $c$ if the resulting configuration is to remain recurrent, which means exactly that $c$ is minimal recurrent.

The distinction between ASM and SSM comes directly from the results in the case of general recurrent configurations (Theorem~\ref{thm:wheel_rec_words}), so does not need to be considered here.
\end{proof}

The bijection in Theorem~\ref{thm:MR_Or} from a minimal recurrent configuration for the ASM on $W_n$ to its compatible orientation of $C_n$ is straightforward to describe. 
If a vertex has no grain of sand, the two edges on the cycle are outgoing. Working outwards from that vertex, all edges keep the same direction until we encounter a vertex with two grains, where the directions ``flip'' (the two edges incident to that vertex are incoming), and we then simply repeat the process. 
Figure~\ref{fig:config_or} shows an example of this construction. We write $\OO(c)$ for the orientation corresponding to a minimal recurrent configuration $c$.
For the additional minimal recurrent configuration $c = (1,\ldots,1)$ of the SSM, we have the choice between the two directed cycles of $C_n$ (clockwise or counter-clockwise). We fix, conventionally, $\OO(c)$ to be the counter-clockwise directed cycle in this case.

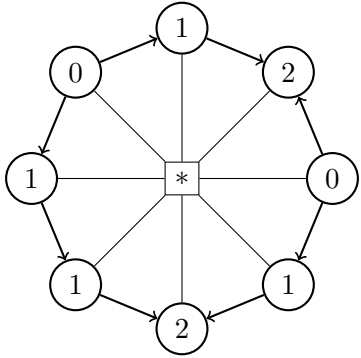
\begin{figure}[ht]
\centering
\begin{tikzpicture}

\node[rectangle, draw=black] (0) at (0,0) {$*$};
\node[circle, thick, draw=black] (1) at (0,2) {$1$};
\node[circle, thick, draw=black] (2) at ({sqrt(2)},{sqrt(2)}) {$2$};
\node[circle, thick, draw=black] (3) at (2,0) {$0$};
\node[circle, thick, draw=black] (4) at ({sqrt(2)},-{sqrt(2)}) {$1$};
\node[circle, thick, draw=black] (5) at (0,-2) {$2$};
\node[circle, thick, draw=black] (6) at (-{sqrt(2)},-{sqrt(2)}) {$1$};
\node[circle, thick, draw=black] (7) at (-2,0) {$1$};
\node[circle, thick, draw=black] (8) at (-{sqrt(2)},{sqrt(2)}) {$0$};
\draw[thick,->] (1)--(2);
\draw[thick,->] (3)--(2);
\draw[thick,->] (3)--(4);
\draw[thick,->] (4)--(5);
\draw[thick,->] (6)--(5);
\draw[thick,->] (7)--(6);
\draw[thick,->] (8)--(7);
\draw[thick,->] (8)--(1);
\foreach \x in {1,...,8}
	\draw (0)--(\x);

\end{tikzpicture}

\caption{A minimal recurrent configuration on $W_8$ and the corresponding compatible orientation of $C_8$; the number of incoming edges at each vertex is equal to the number of grains.
\label{fig:config_or}
}

\end{figure}

\subsection{Canonical minimal recurrent configurations}\label{subsec:minrec}

In this part, we introduce one of the key ideas of the paper, namely mapping each recurrent configuration of the SSM on $W_n$ to a corresponding canonical minimal recurrent configuration. 
This idea of mapping recurrent configurations to minimal recurrent configurations has been used in previous combinatorial sandpile research. 
In~\cite{DLB} the authors use it implicitly to define the ``bounce path'' in the parallelogram polyomino that a recurrent configuration on the complete bipartite graph is mapped to. 
The bounce path defines a minimal recurrent configuration (or equivalently, a ribbon polyomino), and this bounce path is then used to define a $(q,t)$-Narayana polynomial which enumerates recurrent configurations of the ASM on complete bipartite graphs according to certain parameters. 
In~\cite{DSSS1}, the authors first establish a bijection between minimal recurrent configurations for the ASM on Ferrers graphs and so-called EW-tableaux. 
A general recurrent configuration $c$ is then decomposed into a pair $(m(c), e(c))$ where $m(c)$ is the canonical minimal recurrent configuration associated with $c$, and $e(c)$ describes where grains need to be added to this $m(c)$ to re-constitute the original configuration $c$. 
This leads to a bijection between all recurrent configurations for the ASM on Ferrers graphs and decorated EW-tableaux.

In the wheel graph case, the construction of the canonical minimal recurrent configuration is very similar to the construction in the last part of the proof of Theorem~\ref{thm:wheel_rec_words}.

\begin{definition}\label{def:cyclic_first_max_weight}
Let $c \in \SSMRecn$ be a recurrent configuration for the SSM on the wheel graph $W_n$. We say that $i \in [n]$ is a \emph{cyclically first maximal vertex} if $c_i = 2$, and there exists $j \in [n]$ such that $c_j=0$ and for all $k \in (j,i)$ we have $c_k=1$.\\
A (clockwise) \emph{$01^*$-chain} of $c$ is a set of consecutive vertices $j, j+1, \ldots, j + k$ such that $c_j = 0$ and $c_{j'} = 1$ for $j+1 \leq j' \leq j+k$. The \emph{$01^*$-weight} of $c$, denoted $\w(c)$, is the total number of vertices belonging to a $01^*$-chain.
\end{definition}

In words, a cyclically first maximal vertex is a vertex with two grains of sand which is the first such vertex following some vertex with no grain (in a clockwise direction). 
The maximal $01^*$-chains of $c$ are the semi-open intervals from a vertex with no grains (included) to the following cyclically first maximal vertex (excluded). 
Then the $01*$-weight of $c$ is simply the sum of the lengths of these maximal chains. This is illustrated in Figure~\ref{fig:cyclic_first_max} below. 

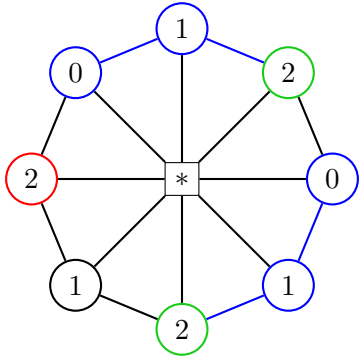
\begin{figure}[ht]
\centering
\begin{tikzpicture}

\node[rectangle, draw=black] (0) at (0,0) {$*$};
\node[circle, thick, draw=blue] (1) at (0,2) {$1$};
\node[circle, thick, draw=mygreen] (2) at ({sqrt(2)},{sqrt(2)}) {$2$};
\node[circle, thick, draw=blue] (3) at (2,0) {$0$};
\node[circle, thick, draw=blue] (4) at ({sqrt(2)},-{sqrt(2)}) {$1$};
\node[circle, thick, draw=mygreen] (5) at (0,-2) {$2$};
\node[circle, thick, draw=black] (6) at (-{sqrt(2)},-{sqrt(2)}) {$1$};
\node[circle, thick, draw=red] (7) at (-2,0) {$2$};
\node[circle, thick, draw=blue] (8) at (-{sqrt(2)},{sqrt(2)}) {$0$};
\draw[thick] (2)--(3);
\draw[thick] (5)--(6)--(7)--(8);
\draw[thick, blue] (8)--(1)--(2);
\draw[thick, blue] (3)--(4)--(5);
\foreach \x in {1,...,8}
	\draw[thick] (0)--(\x);

\end{tikzpicture}

\caption{A recurrent configuration on $W_8$ with its cyclically first maximal vertices in green, and its non cyclically first maximal vertices in red. The maximal $01^*$-chains are represented in blue, so this configuration has $01^*$-weight equal to $4$.
\label{fig:cyclic_first_max}
}

\end{figure}

We are now ready to define the canonical mapping from recurrent configurations to minimal recurrent configurations.

\begin{definition}\label{def:can_min_rec}
Let $c \in \SSMRecn$ be a recurrent configuration for the SSM on $W_n$. We define a configuration $c' := m(c) \in \Confign$ as follows:
$$ c'_i = \begin{cases}
0 & \text{if } c_i = 0 \\
2 & \text{if } i \mbox{ is a cyclically first maximal vertex } \\
1 & \text{otherwise}.
\end{cases}$$
\end{definition}

\begin{proposition}\label{pro:can_min_rec}
The map $m : c \mapsto m(c)$ given by Definition~\ref{def:can_min_rec} defines a surjection from $\SSMRecn$ to $\SSMMinRecn$, which reduces to the identity on $\SSMMinRecn$. 
Moreover, we have $\level(c) = \vert c \vert - \vert m(c) \vert$, where $\vert c \vert := \sum\limits_{i=1}^n c_i$ is the total number of grains of a configuration $c$. 
We call $m(c)$ the \emph{canonical minimal recurrent configuration} associated to $c$.
\end{proposition}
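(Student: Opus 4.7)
The plan is to prove the proposition in three stages: first that $m$ sends $\SSMRecn$ into $\SSMMinRecn$ (well-definedness), then that $m$ restricts to the identity on $\SSMMinRecn$ (which automatically gives surjectivity), and finally the level identity by a direct count.

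For well-definedness, I would use the characterisation of $\SSMMinRecn$ from Theorem~\ref{thm:wheel_min_rec_words}: it suffices to show $m(c)$ satisfies the strict $02$-cycle condition. Observe first that $m(c)_i = 0$ iff $c_i = 0$, so the zero vertices of $m(c)$ coincide with those of $c$. If $c$ has no zero, then Definition~\ref{def:cyclic_first_max_weight} yields no cyclically first maximal vertex, hence $m(c) = (1,\ldots,1) \in \SSMMinRecn$ by Theorem~\ref{thm:wheel_min_rec_words}. Otherwise pick two cyclically consecutive zeros $i,j$ of $c$, with $c_\ell > 0$ for all $\ell \in (i,j)$. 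By the general $02$-cycle condition (Theorem~\ref{thm:wheel_rec_words}) applied to $c$, there is some $k \in (i,j)$ with $c_k = 2$; taking $k$ to be the first such vertex clockwise from $i$, the positions $\ell \in (i,k)$ satisfy $c_\ell > 0$ (no zeros in $(i,j)$) and $c_\ell \neq 2$ (by minimality of $k$), so $c_\ell = 1$, making $k$ cyclically first maximal and $m(c)_k = 2$. Uniqueness is the delicate point: any other $k' \in (i,j)$ with $c_{k'} = 2$ lies clockwise past $k$, and the only zero of $c$ preceding $k'$ on the arc $(i, k')$ is $i$ itself (again because no zeros lie in $(i,j)$), but $c_k = 2$ with $k \in (i, k')$ blocks $k'$ from being cyclically first maximal, so $m(c)_{k'} = 1$. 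This establishes the strict $02$-cycle condition for $m(c)$.

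For the identity on $\SSMMinRecn$, take $c \in \SSMMinRecn$. If $c = (1,\ldots,1)$, then $c$ has neither zeros nor twos and $m(c) = c$ by inspection. Otherwise, the strict $02$-cycle condition says that between any two cyclically consecutive zeros of $c$ there is exactly one $2$, with only $1$'s in between. Hence every vertex $i$ with $c_i = 2$ is the unique $2$ following the nearest zero counter-clockwise, all intermediate values being $1$, so $i$ is cyclically first maximal and $m(c)_i = 2 = c_i$. The cases $c_i \in \{0,1\}$ are immediate from Definition~\ref{def:can_min_rec}, so $m(c) = c$. Surjectivity of $m$ onto $\SSMMinRecn$ then follows at once, since $\SSMMinRecn \subseteq \SSMRecn$.

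The level identity is a short count. The wheel graph $W_n$ has $\dgr[0] = n$ and $\vert E \vert = 2n$, so Equation~\eqref{eq:def_level} gives $\level(c) = \vert c \vert + n - 2n = \vert c \vert - n$ for every recurrent $c$. On the other hand, every $c' \in \SSMMinRecn$ satisfies $\vert c' \vert = n$: either $c' = (1,\ldots,1)$, or (by the strict $02$-cycle condition) $c'$ has $k$ zeros, exactly $k$ twos (one per arc between consecutive zeros), and $n-2k$ ones, again summing to $n$. Applying this to $c' = m(c)$ yields $\level(c) = \vert c \vert - \vert m(c) \vert$. The main obstacle is the uniqueness step in the well-definedness argument: one must carefully reason on cyclic intervals to rule out additional $2$'s in $m(c)$ between consecutive zeros, while the remaining parts are essentially definition-chasing.
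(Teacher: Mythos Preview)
Your proof is correct and follows essentially the same approach as the paper's. The paper is terser---it asserts that $m(c)$ satisfies the strict $02$-cycle condition ``by construction'' without spelling out the uniqueness step you single out as delicate, and for the level identity it invokes the general fact $\level(m(c))=0$ rather than your direct count $\vert m(c)\vert = n$---but the underlying logic is the same in both places.
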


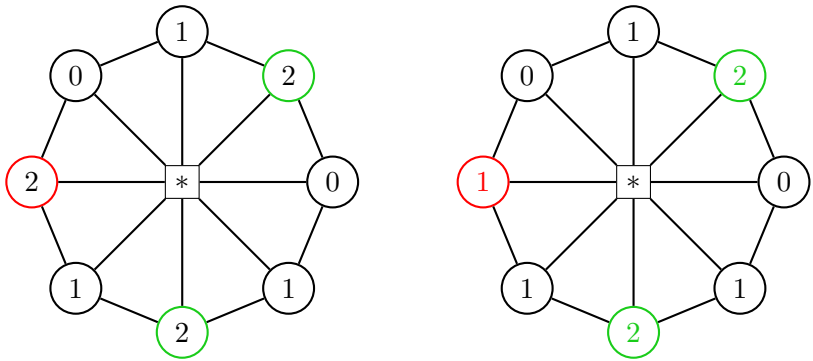
\begin{figure}[ht]
\centering
\begin{tikzpicture}

\node[rectangle, draw=black] (0) at (0,0) {$*$};
\node[circle, thick, draw=black] (1) at (0,2) {$1$};
\node[circle, thick, draw=mygreen] (2) at ({sqrt(2)},{sqrt(2)}) {$2$};
\node[circle, thick, draw=black] (3) at (2,0) {$0$};
\node[circle, thick, draw=black] (4) at ({sqrt(2)},-{sqrt(2)}) {$1$};
\node[circle, thick, draw=mygreen] (5) at (0,-2) {$2$};
\node[circle, thick, draw=black] (6) at (-{sqrt(2)},-{sqrt(2)}) {$1$};
\node[circle, thick, draw=red] (7) at (-2,0) {$2$};
\node[circle, thick, draw=black] (8) at (-{sqrt(2)},{sqrt(2)}) {$0$};
\draw[thick] (1)--(2)--(3)--(4)--(5)--(6)--(7)--(8)--(1);
\foreach \x in {1,...,8}
	\draw[thick] (0)--(\x);
	
\begin{scope}[xshift=6cm]
\node[rectangle, draw=black] (0) at (0,0) {$*$};
\node[circle, thick, draw=black] (1) at (0,2) {$1$};
\node[circle, thick, draw=mygreen, color=mygreen] (2) at ({sqrt(2)},{sqrt(2)}) {$2$};
\node[circle, thick, draw=black] (3) at (2,0) {$0$};
\node[circle, thick, draw=black] (4) at ({sqrt(2)},-{sqrt(2)}) {$1$};
\node[circle, thick, draw=mygreen, color=mygreen] (5) at (0,-2) {$2$};
\node[circle, thick, draw=black] (6) at (-{sqrt(2)},-{sqrt(2)}) {$1$};
\node[circle, thick, draw=red, color=red] (7) at (-2,0) {$1$};
\node[circle, thick, draw=black] (8) at (-{sqrt(2)},{sqrt(2)}) {$0$};
\draw[thick] (1)--(2)--(3)--(4)--(5)--(6)--(7)--(8)--(1);
\foreach \x in {1,...,8}
	\draw[thick] (0)--(\x);
\end{scope}

\end{tikzpicture}

\caption{Illustrating the map from a recurrent configuration (left) to its canonical minimal recurrent configuration (right).
\label{fig:minrec_map}
}

\end{figure}

\begin{proof}
By Theorem~\ref{thm:wheel_min_rec_words} all vertices with two grains of sand in a minimal recurrent configuration are cyclically first maximal vertices.  This implies that $m(c) = c$ if $c \in \SSMMinRecn$. 
Moreover, if $c \in \SSMRecn$, then it satisfies the general $02$-cycle condition. By construction, this implies that $m(c)$ satisfies the strict $02$-cycle condition, and is therefore minimal recurrent as desired. 
The formula for the level follows from the fact that since $m(c)$ is minimal recurrent, its level is $0$, and thus $\level(c) = \level(c) - \level(m(c)) = \vert c \vert - \vert m(c) \vert$.
\end{proof}

\subsection{Properly-marked orientations of \texorpdfstring{$C_n$}{Cn}}\label{subsec:marked_or}

In this part, we introduce the concept of properly-marked orientations of the cycle graph $C_n$, and exhibit a bijection between these objects and the set of recurrent configurations for the SSM on $W_n$. 
A marked orientation of a graph $G=(V,E)$ is simply a pair $\OM = (\OO, M)$ where $\OO$ is an orientation of $G$ and $M$ a subset of the vertex set $V$. We refer to the vertices of $M$ as being the \emph{marked} vertices of $\OM$.

\begin{definition}\label{def:pm}
A \emph{properly-marked} orientation of the cycle graph $C_n$ is a marked orientation $\OM = (\OO,M)$ of $C_n$ such that for each marked vertex $i \in M$, the two edges incident to $i$ are both oriented in a counter-clockwise direction in $\OO$ (that is, $i+1 \orighta i \orighta i-1$), and $\OO$ contains at least one counter-clockwise edge. The set of properly-marked orientations of $C_n$ is denoted $\PMO$.
\end{definition}

\begin{theorem}\label{thm:bij_rec_marked_or}
Let $c \in \SSMRecn$ be a recurrent configuration for the SSM on $W_n$. Define a marked orientation $\Phi_W(c) := \OM = (\OO, M)$ of $C_n$ by:
\begin{enumerate}
\item $\OO := \OO(m(c))$, where $m(c)$ is the canonical minimal recurrent configuration associated with $c$ from Definition~\ref{def:can_min_rec}, and $\OO(m(c))$ is the corresponding orientation of $C_n$, as exhibited at the end of Section~\ref{subsec:rec_wheel_easy};
\item $M$ is the set of vertices $i \in [n]$ such that $c_i = 2$ but $i$ is \emph{not} a cyclically first maximal vertex.
\end{enumerate}
Then $\Phi_W: \SSMRecn \rightarrow \PMO$ defines a bijection from the set of recurrent configurations for the SSM on $W_n$ to the set of properly-marked orientations of $C_n$. 
Moreover, for any configuration $c \in \SSMRecn$, we have $\level(c) = \vert M \vert$ and $\w(c) = \big\vert \{ \text{clockwise edges of } \OO \} \big\vert$.
\end{theorem}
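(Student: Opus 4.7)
The plan is to verify that $\Phi_W$ is well-defined, construct an explicit inverse map to establish the bijection, and then check the two statistics identities.

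\textbf{Well-definedness.} Given $c \in \SSMRecn$, I need to show $(\OO, M) \in \PMO$. Recall that $\OO = \OO(m(c))$ is built by placing sources at the $0$-vertices of $m(c)$, targets at the cyclically first maximal $2$-vertices, and orienting each arc between consecutive source/target pairs in the unique forced direction: clockwise (CW) from a source to the following target, and counter-clockwise (CCW) from a target to the following source. For any $i \in M$, the conditions $c_i = 2$ and ``$i$ not cyclically first maximal'' force, via the $02$-cycle condition on $c$ together with the failure of cyclic-first-maximality at $i$, the existence of a cyclically first maximal vertex $k$ strictly CCW of $i$ with $c_\ell \in \{1, 2\}$ for all $\ell \in (k, i]$. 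Consequently $i$ lies strictly inside the CCW arc running from target $k$ to the next source, and both edges at $i$ are CCW. The remaining condition that $\OO$ contains at least one CCW edge is immediate: either $c = (1, \ldots, 1)$ and $\OO$ is the CCW directed cycle by convention, or $m(c)$ has a target $k$ and the edge $\{k, k+1\}$ is oriented $k+1 \to k$.

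\textbf{Bijection via an inverse.} I define $\Psi$ as follows: for $(\OO, M) \in \PMO$, set $m_i := \In[i]$ (in-degree in $C_n$) and $c_i := 2$ if $i \in M$, and $c_i := m_i$ otherwise. Since $\OO$ contains at least one CCW edge, lifting it to $W_n$ via Remark~\ref{rem:sink-rooted_or_wheel_fan} gives a $0$-rooted orientation, and Theorem~\ref{thm:MR_Or} yields $m \in \SSMMinRecn$. A marked vertex $i$ has both incident edges CCW (that is, $i+1 \to i$ and $i \to i-1$), hence $\In[i] = 1$, so $m_i = 1$ and $i$ sits strictly interior to a CCW arc of $\OO$. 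Because $c \succeq m$ and $c$ is stable, Proposition~\ref{pro:po_rec} gives $c \in \SSMRecn$. The essential step is proving $m(c) = m$, via a case analysis on the role of $i$ in $\OO$ (source, target, interior of a CW arc, or interior of a CCW arc). The only subtle case is that marked vertices interior to a CCW arc do not become cyclically first maximal in $c$, because for any such $i$ the target of $\OO$ preceding $i$ already has $c$-value $2$ and sits strictly between $i$ and the nearest $0$-vertex CCW of $i$. Once $m(c) = m$ is established, one immediately gets $\OO(m(c)) = \OO$ and $M(c) = \{i : c_i = 2, \, m(c)_i = 1\} = M$, so $\Phi_W \circ \Psi = \mathrm{id}$. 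Injectivity of $\Phi_W$ follows by reconstructing $c$ from $m(c)$ and $M(c)$ using the same formula.

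\textbf{Statistics.} The level identity is immediate from Proposition~\ref{pro:can_min_rec}: since $c$ agrees with $m(c)$ off $M$ and exceeds it by exactly $1$ on $M$, we have $\level(c) = |c| - |m(c)| = |M|$. For the $01^*$-weight identity, each maximal CW arc of $\OO$ runs from a source $j$ to the next target $k$ and has exactly $k - j$ edges (in CW distance). Along this arc, $c$ reads $0, 1, \ldots, 1, 2$, because the interior vertices of the arc are C-passing and thus cannot belong to $M$; this contributes exactly one maximal $01^*$-chain of length $k - j$. Vertices interior to CCW arcs carry $c$-values in $\{1, 2\}$ and lie in no $01^*$-chain. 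Summing over all CW arcs yields $\w(c) = |\{\text{clockwise edges of } \OO\}|$.

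The main obstacle is the verification that $m(c) = m$ in the surjectivity step. This is where the properly-marked hypothesis is used in full force: requiring both edges at every $i \in M$ to be CCW is precisely what places each added mark strictly inside a CCW run of $\OO$, which in turn prevents any new cyclically first maximal vertices from appearing in $c$ after the grains at $M$ are promoted from $1$ to $2$.
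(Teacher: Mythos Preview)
Your proof is correct and follows essentially the same approach as the paper: both construct the inverse map $\Psi_W$ by setting $c = \tilde{c} + \mathds{1}_M$ with $\tilde{c}_i = \In[i]$, and both derive the statistics identities from Proposition~\ref{pro:can_min_rec} and the correspondence between $01^*$-chains and clockwise arcs. Your write-up is in fact more thorough than the paper's, which omits the well-definedness check and declares the inverse relation ``clear by construction'' without supplying the case analysis you give for $m(c)=m$.
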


\begin{remark}\label{rem:excess_grains}
In this construction, the marked vertices of the orientation are given by the vertices which are maximal (i.e. have two grains of sand) but not cyclically first maximal, in the corresponding recurrent configuration $c$. 
We can interpret these also as the locations of ``excess grains'' in $c$, that is the vertices where (exactly) one grain is removed from $c$ to obtain its canonical minimal recurrent configuration $m(c)$.
\end{remark}

\begin{proof}
There is in fact little work remaining to be done in the proof, since nearly all has been done through the constructions of Sections~\ref{subsec:rec_wheel_easy} and \ref{subsec:minrec}. 
We simply exhibit the inverse bijection $\Psi_W: \PMO \rightarrow \SSMRecn$ as follows. Let $\OM = (\OO, W)$ be a properly-marked orientation of $C_n$, and define $\tilde{c}$ to be the minimal recurrent configuration corresponding to the orientation $\OO$, i.e. $\tilde{c}_i := \In[i]$ for all $i \in [n]$. 
Then we define $c = \Psi_W(\OM) := \tilde{c} + \mathds{1}_M$ (i.e. $c$ is obtained from $\tilde{c}$ by adding one grain of sand to each marked vertex in $\OM$). It is clear that by construction $\Phi_W$ and $\Psi_W$ are inverses of each other, and the bijection is proved. 

The formula for the level follows immediately from the level formula in Proposition~\ref{pro:can_min_rec} and the observation that each marked vertex corresponds to exactly one grain of sand being removed in the transformation from $c$ to $m(c)$ (see Remark~\ref{rem:excess_grains}). 
The formula for the $01^*$-weight follows from the observation that each vertex $i$ in a $01^*$-chain corresponds bijectively to a clockwise edge $i \orighta i+1$. 
Figure~\ref{fig:bij_rec_pmor} provides an illustration of the different steps in the construction of $\Phi_W$.

\begin{figure}[ht]
\centering
\begin{tikzpicture}[scale=0.8]

\node[rectangle, draw=black] (0) at (0,0) {$*$};
\node[circle, thick, draw=blue] (1) at (0,2) {$1$};
\node[circle, thick, draw=black] (2) at ({sqrt(2)},{sqrt(2)}) {$2$};
\node[circle, thick, draw=blue] (3) at (2,0) {$0$};
\node[circle, thick, draw=blue] (4) at ({sqrt(2)},-{sqrt(2)}) {$1$};
\node[circle, thick, draw=black] (5) at (0,-2) {$2$};
\node[circle, thick, draw=black] (6) at (-{sqrt(2)},-{sqrt(2)}) {$1$};
\node[circle, thick, draw=red] (7) at (-2,0) {$2$};
\node[circle, thick, draw=blue] (8) at (-{sqrt(2)},{sqrt(2)}) {$0$};
\draw[thick] (1)--(2)--(3)--(4)--(5)--(6)--(7)--(8)--(1);
\foreach \x in {1,...,8}
	\draw[thick] (0)--(\x);
	
\begin{scope}[xshift=5.5cm]
\node[rectangle, draw=black] (0) at (0,0) {$*$};
\node[circle, thick, draw=blue] (1) at (0,2) {$1$};
\node[circle, thick, draw=black] (2) at ({sqrt(2)},{sqrt(2)}) {$2$};
\node[circle, thick, draw=blue] (3) at (2,0) {$0$};
\node[circle, thick, draw=blue] (4) at ({sqrt(2)},-{sqrt(2)}) {$1$};
\node[circle, thick, draw=black] (5) at (0,-2) {$2$};
\node[circle, thick, draw=black] (6) at (-{sqrt(2)},-{sqrt(2)}) {$1$};
\node[circle, thick, draw=red, color=red] (7) at (-2,0) {$1$};
\node[circle, thick, draw=blue] (8) at (-{sqrt(2)},{sqrt(2)}) {$0$};
\draw[thick] (1)--(2)--(3)--(4)--(5)--(6)--(7)--(8)--(1);
\foreach \x in {1,...,8}
	\draw[thick] (0)--(\x);
\end{scope}

\begin{scope}[xshift=11cm]
\node[circle, thick, draw=black] (1) at (0,2) {};
\node[circle, thick, draw=black] (2) at ({sqrt(2)},{sqrt(2)}) {};
\node[circle, thick, draw=black] (3) at (2,0) {};
\node[circle, thick, draw=black] (4) at ({sqrt(2)},-{sqrt(2)}) {};
\node[circle, thick, draw=black] (5) at (0,-2) {};
\node[circle, thick, draw=black] (6) at (-{sqrt(2)},-{sqrt(2)}) {};
\node[circle, thick, draw=red, color=red] (7) at (-2,0) {};
\node[circle, thick, draw=black] (8) at (-{sqrt(2)},{sqrt(2)}) {};
\draw[thick,->,blue] (1)--(2);
\draw[thick,->] (3)--(2);
\draw[thick,->,blue] (3)--(4);
\draw[thick,->,blue] (4)--(5);
\draw[thick,->] (6)--(5);
\draw[thick,->] (7)--(6);
\draw[thick,->] (8)--(7);
\draw[thick,->,blue] (8)--(1);
\end{scope}

\end{tikzpicture}

\caption{Illustrating the different steps in the construction of the bijection $\Phi_W$. On the left, a recurrent configuration $c$ on $W_8$ with its non cyclically first maximal vertex $v$ marked in red. In the middle, we have the canonical minimal recurrent configuration corresponding to $c$, keeping the marking on $v$. And on the right, the corresponding properly marked orientation of $C_8$ (with $M = \{v\}$ the previously marked vertex). The vertices on the $01^*$-chains of $c$ are in 1-1 correspondence with the clockwise-directed edges which immediately follow them (in blue).
\label{fig:bij_rec_pmor}
}

\end{figure}
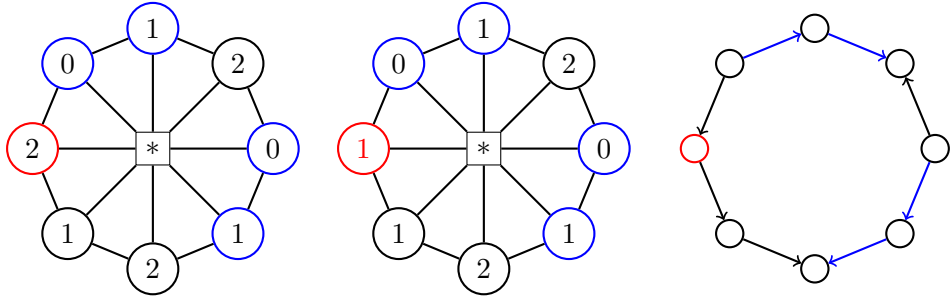

\end{proof}

\subsection{Subgraphs of \texorpdfstring{$C_n$}{Cn}}

We are now equipped to establish the main result of this section: a bijection between recurrent configurations for the SSM on $W_n$ and subgraphs of $C_n$ which maps the level, resp.\ $01^*$-weight, of a configuration to the number of edges in, resp.\ vertices not in, the corresponding subgraph. 
First, consider the following construction. Given a properly-marked orientation $\OM = (\OO, M)$ of $C_n$, we define a subgraph $S(\OM)$ of the \emph{edge-to-vertex dual} graph $C_n'$ by letting the vertices of $S(\OM)$ be those that correspond to edges directed counter-clockwise in $\OO$, and the edges of $S(\OM)$ be those that correspond to marked vertices in $M$. Since $\OO$ contains at least one counter-clockwise edge (Definition~\ref{def:pm}), by construction the subgraph $S(\OM)$ is non-empty (it contains at least one vertex).
This construction is illustrated on the right-hand side of Figure~\ref{fig:bij_rec_subgraph}.

By Definition~\ref{def:pm}, both edges incident to a marked vertex must be directed counter-clockwise. 
This means that the two end-points of the corresponding edge of $C_n'$ are present in $S(\OM)$, and therefore that $S(\OM)$ is indeed a subgraph of $C_n'$. 
It is then straightforward to check that $S$ defines a bijection from the set of properly-marked orientations of the cycle graph $C_n$ to the set of subgraphs of $C_n'$. 
Moreover, by construction, the number of marked vertices in $\OM$ is equal to the number of edges in $S(\OM)$, and the number of clockwise edges in $\OM$ is equal to the number of vertices \emph{not} in $S(\OM)$. 
Since the (edge-to-vertex) dual graph $C_n'$ is isomorphic to $C_n$, with a slight abuse of notation we get the following, which is the first main result of our paper.

\begin{theorem}\label{thm:bij_rec_subgraph}
The map $S \circ \Phi_W : \SSMRecn \rightarrow \Subn$ defines a bijection from the set of recurrent configurations for the SSM on the wheel graph $W_n$ to the set of subgraphs of the cycle graph $C_n$. 
Moreover, this bijection maps the level of a recurrent configuration $c$ to the number of edges in the corresponding subgraph $G$, and the $01^*$-weight of $c$ to the number of vertices that are \emph{not} in $G$.
\end{theorem}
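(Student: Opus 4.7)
The plan is to leverage almost everything already set up: Theorem~\ref{thm:bij_rec_marked_or} gives a bijection $\Phi_W : \SSMRecn \to \PMO$, so it suffices to verify that the auxiliary map $S : \PMO \to \Sub[C_n']$ is a bijection (with the advertised statistic correspondences) and then identify $C_n'$ with $C_n$. The overall proof would therefore be a short assembly rather than a new construction.

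First, I would verify that $S$ is well-defined. Given a properly-marked orientation $\OM = (\OO,M)$, each marked vertex $i \in M$ has both incident edges directed counter-clockwise (Definition~\ref{def:pm}), so the two endpoints in $C_n'$ of the edge corresponding to $i$ both appear as vertices of $S(\OM)$. Thus $S(\OM)$ really is a subgraph of $C_n'$, and it has a non-empty vertex set because $\OO$ contains at least one counter-clockwise edge. Next, I would exhibit the inverse $S^{-1}$: given a non-empty subgraph $H \in \Sub[C_n']$, orient the edge of $C_n$ corresponding to a vertex $v$ of $C_n'$ counter-clockwise if $v \in V(H)$ and clockwise otherwise, and mark the vertex of $C_n$ corresponding to an edge $e$ of $C_n'$ if and only if $e \in E(H)$. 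Since $H$ is a subgraph, the endpoints in $C_n'$ of any edge $e \in E(H)$ lie in $V(H)$, which translates to both edges of $C_n$ incident to the corresponding marked vertex being counter-clockwise, i.e.\ the marked-orientation is proper. It is immediate that $S$ and $S^{-1}$ are mutually inverse.

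To identify $\Sub[C_n']$ with $\Subn$, I would note that the edge-to-vertex dual of $C_n$ is another cycle on $n$ vertices (simply the same cycle, rotated by a half-edge), and fix an explicit isomorphism $C_n' \cong C_n$; the only mild subtlety is the convention-dependent labelling, but this plays no role as the statement is phrased in terms of unlabelled subgraph counts. Composing $S \circ \Phi_W$ yields the bijection $\SSMRecn \to \Subn$.

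Finally, I would read off the two statistic identities directly from the construction. By Theorem~\ref{thm:bij_rec_marked_or}, $\level(c) = |M|$, and by definition of $S$, the number of edges of $S(\Phi_W(c))$ is $|M|$. Similarly, $\w(c)$ equals the number of clockwise edges of $\OO$, which is $n$ minus the number of counter-clockwise edges, i.e.\ $n - |V(S(\Phi_W(c)))|$, which is precisely the number of vertices of $C_n'$ not in $S(\Phi_W(c))$. The main (minor) obstacle is really notational: being careful that ``number of vertices not in the subgraph'' is well-defined because $C_n' \cong C_n$ has a fixed vertex set of size $n$, and that the formal identification does not interfere with the two stated statistics, both of which are isomorphism-invariant.
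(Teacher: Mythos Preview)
Your proposal is correct and follows essentially the same approach as the paper: the paper likewise establishes the result by combining Theorem~\ref{thm:bij_rec_marked_or} with the observation that $S$ is a bijection from $\PMO$ to $\Sub[C_n']$ (checking well-definedness via Definition~\ref{def:pm} and then invoking the isomorphism $C_n' \cong C_n$), and reads off the statistic identities directly from the construction. The only difference is that you spell out the inverse $S^{-1}$ explicitly, whereas the paper leaves this as ``straightforward to check''.
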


Figure~\ref{fig:bij_rec_subgraph} illustrates the bijection from Theorem~\ref{thm:bij_rec_subgraph}. 
The outer cycle on the right is the properly marked orientation $\Phi_W(c)$, as in Figure~\ref{fig:bij_rec_pmor}, and the inner cycle is the corresponding subgraph of the edge-to-vertex dual cycle graph. 
For readability purposes, we have represented vertices present in the subgraph (corresponding to counter-clockwise edges) in black, and ``missing'' vertices (corresponding to clockwise edges) in white.

\begin{figure}[ht]
\centering
\begin{tikzpicture}

\node[rectangle, draw=black] (0) at (0,0) {$*$};
\node[circle, thick, draw=blue] (1) at (0,2) {$1$};
\node[circle, thick, draw=black] (2) at ({sqrt(2)},{sqrt(2)}) {$2$};
\node[circle, thick, draw=blue] (3) at (2,0) {$0$};
\node[circle, thick, draw=blue] (4) at ({sqrt(2)},-{sqrt(2)}) {$1$};
\node[circle, thick, draw=black] (5) at (0,-2) {$2$};
\node[circle, thick, draw=black] (6) at (-{sqrt(2)},-{sqrt(2)}) {$1$};
\node[circle, thick, draw=red] (7) at (-2,0) {$2$};
\node[circle, thick, draw=blue] (8) at (-{sqrt(2)},{sqrt(2)}) {$0$};
\draw[thick] (1)--(2)--(3)--(4)--(5)--(6)--(7)--(8)--(1);
\foreach \x in {1,...,8}
	\draw[thick] (0)--(\x);

\begin{scope}[xshift=6cm]

\node[circle, thick, draw=black] (1) at (0,2) {};
\node[circle, thick, draw=black] (2) at ({sqrt(2)},{sqrt(2)}) {};
\node[circle, thick, draw=black] (3) at (2,0) {};
\node[circle, thick, draw=black] (4) at ({sqrt(2)},-{sqrt(2)}) {};
\node[circle, thick, draw=black] (5) at (0,-2) {};
\node[circle, thick, draw=black] (6) at (-{sqrt(2)},-{sqrt(2)}) {};
\node[circle, thick, draw=red, color=red] (7) at (-2,0) {};
\node[circle, thick, draw=black] (8) at (-{sqrt(2)},{sqrt(2)}) {};
\draw[thick,->,blue] (1)--(2);
\draw[thick,->] (3)--(2);
\draw[thick,->,blue] (3)--(4);
\draw[thick,->,blue] (4)--(5);
\draw[thick,->] (6)--(5);
\draw[thick,->] (7)--(6);
\draw[thick,->] (8)--(7);
\draw[thick,->,blue] (8)--(1);

\node[circle, draw, fill=black] (11) at (-1.25,0.5) {};
\node[circle, draw, fill=black] (12) at (-1.25,-0.5) {};
\node[circle, draw, fill=black] (13) at (-0.55,-1.1) {};
\node[circle, draw=black] (14) at (0.55,-1.1) {};
\node[circle, draw=black] (15) at (1.25,-0.5) {};
\node[circle, draw, fill=black] (16) at (1.25,0.5) {};
\node[circle, draw=black] (17) at (0.55,1.1) {};
\node[circle, draw=black] (18) at (-0.55,1.1) {};

\draw[very thick] (11)--(12);

\end{scope}

\end{tikzpicture}

\caption{Illustrating the bijection from $\SSMRecn$ to $\Subn$.
\label{fig:bij_rec_subgraph}
}

\end{figure}

The following straightforward corollary links a specification of the Tutte polynomial of $W_n$ to the number of subgraphs of $C_n$. 
While some quite complicated explicit general formulae have been given for the Tutte polynomial of $W_n$ (see e.g. \cite{BDS, BMM}), and simpler formulae for the chromatic polynomial (another specification of the Tutte polynomial), as far as we know, this particular specification, with its simple combinatorial interpretation, has not been observed previously in the literature.

\begin{corollary}\label{cor: Tutte}
Let $T_{W_n}(x,y)$ denote the Tutte polynomial of the wheel graph $W_n$. We have:
$$ 1 + T_{W_n}(1,x) = \sum\limits_{k \geq 0} a_{k,n} x^k, $$
where $a_{k,n}$ is the number of subgraphs of $C_n$ with $k$ edges.
\end{corollary}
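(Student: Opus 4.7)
The plan is to chain together three results already in the paper: Theorem~\ref{thm:Tutte_level} (Tutte evaluates to ASM level polynomial), Equation~\eqref{eq:ssm_asm_rec_wheel} (the SSM adds exactly one extra recurrent configuration, the all-ones), and Theorem~\ref{thm:bij_rec_subgraph} (the SSM level polynomial is the edge generating function of subgraphs of $C_n$).

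First I would apply Theorem~\ref{thm:Tutte_level} to rewrite $T_{W_n}(1,x) = \mathrm{ASMLevel}_{W_n}(x) = \sum_{c \in \ASMRecn} x^{\level(c)}$. Next, using Equation~\eqref{eq:ssm_asm_rec_wheel}, I would express the SSM level polynomial as
\[
\mathrm{SSMLevel}_{W_n}(x) = \mathrm{ASMLevel}_{W_n}(x) + x^{\level(1,1,\ldots,1)}.
\]
The one small computation to carry out is to verify that the all-ones configuration has level $0$: since $W_n$ has $2n$ edges and $\dgr[0] = n$, Equation~\eqref{eq:def_level} gives $\level(1,\ldots,1) = n + n - 2n = 0$. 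Hence $\mathrm{SSMLevel}_{W_n}(x) = T_{W_n}(1,x) + 1$.

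Finally, applying Theorem~\ref{thm:bij_rec_subgraph}, the bijection $S \circ \Phi_W$ sends the level statistic on $\SSMRecn$ to the number-of-edges statistic on $\Subn$, so
\[
\mathrm{SSMLevel}_{W_n}(x) = \sum_{G \in \Subn} x^{|E(G)|} = \sum_{k \geq 0} a_{k,n}\, x^k.
\]
Combining the two identities yields $1 + T_{W_n}(1,x) = \sum_{k\geq 0} a_{k,n} x^k$, as desired.

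There is no real obstacle in this proof; it is essentially a bookkeeping argument. The only minor subtlety is the $+1$ on the left-hand side, which traces exactly to the fact that the all-ones configuration is the unique element of $\SSMRecn \setminus \ASMRecn$ and contributes $x^0 = 1$ to the SSM level polynomial. This explains, combinatorially, why the Tutte specialisation misses exactly one subgraph of $C_n$ (namely the empty-edge subgraph on the full vertex set, which is the image of $(1,\ldots,1)$ under $S\circ\Phi_W$).
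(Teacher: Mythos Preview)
Your proof is correct and follows essentially the same approach as the paper: both combine Theorem~\ref{thm:Tutte_level}, Theorem~\ref{thm:bij_rec_subgraph}, and Equation~\eqref{eq:ssm_asm_rec_wheel} together with the observation that the extra configuration $(1,\ldots,1)$ has level $0$. Your write-up is slightly more detailed (you explicitly compute the level and identify the corresponding subgraph), but the argument is the same.
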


The sequence $\left( a_{k,n} \right)$ is given by Sequence A277919 in the OEIS~\cite{OEIS}, although no explicit formula is given for computing these numbers.

\begin{proof}
This follows immediately from Theorem~\ref{thm:Tutte_level} and Theorem~\ref{thm:bij_rec_subgraph}, along with the observation that $\SSMRecn$ is obtained by adding the configuration $(1,\ldots,1)$ (with level $0$) to $\ASMRecn$ (see Equation~\eqref{eq:ssm_asm_rec_wheel} and preceding remarks).
\end{proof}

\begin{remark}\label{rem:importance_ssm}
While the recurrent configurations for the ASM and SSM on wheel graphs differ only by one element, the configuration $(1,\ldots,1)$, this additional element really is crucial in our construction via canonical minimal recurrent configurations. 
Indeed, any configuration where all vertices have at least one grain maps canonically to this additional element. 
The properly-marked orientation corresponding to such a configuration is simply the counter-clockwise directed cycle, with marks on all vertices with two grains of sand. 
In terms of subgraphs, these correspond to all subgraphs with $n$ vertices, i.e. where all the vertices are present.
\end{remark}


\section{Links to Delannoy numbers}\label{sec:delannoy}

In this section, we consider the case where the cycle has an even number of edges (or vertices) and the subgraphs have exactly half the edges of the cycle. We denote $\Gamma_n$ the set of subgraphs of $C_{2n}$ with $n$ edges. 
Our main goal is to enumerate $\Gamma_n$. In fact, we enumerate these subgraphs according to their number of \emph{connected components}. 
Consider a subgraph $G$ of $C_{2n}$ with $n$ edges. A connected component of $G$ is simply a path with $i$ edges and $i+1$ vertices, for some $i \geq 0$ (isolated vertices are considered to make up a connected component). 
By summing these numbers of vertices and edges over all connected components, we see that if $G$ has $k$ connected components, then it has $(n+k)$ vertices (and vice versa). Note that we must have $1 \leq k \leq n$. 

In the spirit of Figure~\ref{fig:bij_rec_subgraph}, we will refer to the $(n-k)$ vertices not in  $G$ as the ``missing'' (or sometimes ``white'') vertices of $G$. 
For $k \in \{1,\ldots,n\}$, we denote $\Gamma_{n,k}$ the set of subgraphs in $\Gamma_n$ with $k$ connected components (equivalently $(n+k)$ vertices, equivalently $(n-k)$ missing vertices). 
The subgraph on the left of Figure~\ref{fig:balls_boxes} is an element of $\Gamma_{4,3}$.

The sets $\Gamma_{n,k}$ turn out to be equinumerous with certain lattice paths. These paths are named Delannoy paths after Henri Delannoy, a French mathematician of the 19th century. For more information about the life and work of Delannoy, we refer the reader to the very interesting historical note by Banderier and Schwer~\cite{Band}.

A \emph{Delannoy path} is a path in $\Z^2$ starting at $(0,0)$ with steps $R:=(1,0)$, $U:=(0,1)$ and $D:=(1,1)$. A Delannoy path is said to be \emph{symmetric} if it ends on the first diagonal, i.e. at some point $(x,x)$. 
In this paper, we only consider symmetric Delannoy paths, and will typically refer to them as simply Delannoy paths. 
For $n \geq 0$ we denote $\Del[n]$ the set of Delannoy paths ending at $(n,n)$, and for $0 \leq k \leq n$, we denote $\Del[n,k]$ the elements of $\Del[n]$ with $k$ steps $R$ (equivalently $k$ steps $U$, equivalently $(n-k)$ steps $D$). The $n$-th (central) \emph{Delannoy number} is $\vert \Del[n] \vert$.

These numbers appear in several related problems in enumerative combinatorics and also have an interesting link to the Legendre polynomials. 
We refer the interested reader to the fun paper by Sulanke~\cite{Sul} which lists 29 examples of combinatorial objects enumerated by the central Delannoy numbers, such as paths, polyominoes, domino tilings, graph matchings, and so on. 
In this paper, we use the following notion of differed Delannoy paths.

\begin{definition}\label{def:delannoy}
A \emph{differed} Delannoy path is a Delannoy path whose first step is not $D$. We denote $\DiffDel[n]$ the set of differed Delannoy paths ending at $(n,n)$ and $\DiffDel[n,k]$ those with $k$ steps $R$. Note that in this case we should have $k \geq 1$.
\end{definition}

\begin{figure}[ht]
\centering

\begin{tikzpicture}[scale=0.9]
\foreach \x in {0,...,4}
  \foreach \y in {0,...,4}
    \draw [fill=mygray, color=mygray] (\x,\y) circle [radius=0.1];
\draw [dashed, mygray] (-0.3,-0.3)--(4.3,4.3);
\draw (1) [fill] (0,0) circle [radius=0.1];
\draw (2) [fill] (1,0) circle [radius=0.1];
\draw (3) [fill] (1,1) circle [radius=0.1];
\draw (4) [fill] (1,2) circle [radius=0.1];
\draw (5) [fill] (2,3) circle [radius=0.1];
\draw (6) [fill] (3,3) circle [radius=0.1];
\draw (7) [fill] (4,3) circle [radius=0.1];
\draw (8) [fill] (4,4) circle [radius=0.1];
\foreach \x in {1,...,7}
  \draw [thick] (0,0)--(1,0)--(1,2)--(2,3)--(4,3)--(4,4);
\end{tikzpicture}

\caption{A differed Delannoy path in $\DiffDel[4,3]$. \label{fig:delannoy_path_example}}

\end{figure}
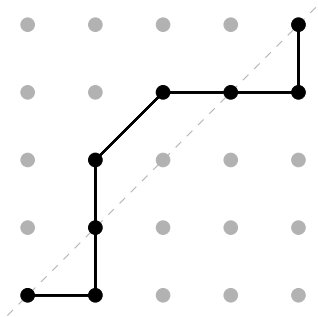

\begin{remark}\label{rem:diff_delannoy}
The term \emph{differed} Delannoy path comes from the following straightforward observation. Partitioning elements of $\Del[n,k]$ according to whether their first step is $D$ or not, we get that $\vert \DiffDel[n,k] \vert = \vert \Del[n,k] \vert - \vert \Del[n-1,k] \vert$ for any $n \geq 1$ and $k \in \{1,\ldots,n\}$. That is, the differed Delannoy paths are counted by the first differences of the Delannoy numbers.
\end{remark}

The general sequence of first differences of Delannoy numbers $\vert \DiffDel[n] \vert$ (not decomposed according to step numbers) is given by Sequence~A110170 in the OEIS~\cite{OEIS}. If we decompose differed Delannoy paths according to their number of steps, we get the following array, with row indices $n \geq 1$ and column indices $k \geq 1$.
$$ \begin{array}{llllll}
2 & & & & & \\
4 & 6 & & & & \\
6 & 24 & 20 & & & \\
8 & 60 & 120 & 70 & & \\
10 & 120 & 420 & 560 & 252 & \\
12 & 210 & 1120 & 2520 & 2520 & 924 \\
\end{array}
$$
We can check that the rows do indeed sum to the first differences of Delannoy numbers given in the OEIS.

\begin{remark}\label{rem:diff_delannoy_oeis}
The sequence of numbers above does not yet appear in the OEIS (the author has submitted it for consideration). The equivalent array for Delannoy numbers rather than first differences is Sequence A063007.
\end{remark}

\begin{theorem}\label{thm:delannoy}
Let $n \geq 1$, and $k \in \{1,\ldots,n\}$. The following three sets all have cardinality 
\beq\label{eq:count_delannoy}
\binom{2k}{k} \cdot \binom{n+k-1}{n-k}.
\eeq
\begin{enumerate}
\item The set of recurrent configurations $c \in \Rec[W_{2n}]$ satisfying $\level(c) = n$ and $\w(c) = n-k$.
\item The set $\Gamma_{n,k}$ of subgraphs of $C_{2n}$ with $n$ edges and $k$ connected components (equivalently $(n+k)$ vertices).
\item The set $\DiffDel[n,k]$ of differed Delannoy paths ending at $(n,n)$ with $k$ steps $R$ (equivalently $(n-k)$ steps $D$).
\end{enumerate}
\end{theorem}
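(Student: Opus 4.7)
The plan is to establish the bijection $(1)\leftrightarrow(2)$ using Theorem~\ref{thm:bij_rec_subgraph}, and then separately verify that $|\Gamma_{n,k}| = |\DiffDel[n,k]| = \binom{2k}{k}\binom{n+k-1}{n-k}$ by direct enumeration of each.

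For $(1)\leftrightarrow(2)$, I would appeal directly to Theorem~\ref{thm:bij_rec_subgraph}: the map $S\circ \Phi_W$ sends a recurrent configuration $c$ with $\level(c)=n$ and $\w(c)=n-k$ to a subgraph of $C_{2n}$ with exactly $n$ edges and $2n-(n-k)=n+k$ vertices. Since $n<2n$, such a subgraph cannot contain all $2n$ cycle edges, so it is a forest, and the standard relation (vertices)$-$(edges)$=$(components) gives exactly $k$ components. Hence $S\circ \Phi_W$ restricts to a bijection from the set in (1) onto $\Gamma_{n,k}$.

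To count $\Gamma_{n,k}$, I would parametrise an element $G\in\Gamma_{n,k}$ by its cyclic layout around $C_{2n}$: an alternating sequence of $k$ path components $C_1,\ldots,C_k$ with $v_r\geq 1$ vertices each, separated by $k$ gaps containing $s_r\geq 0$ consecutive missing vertices each, subject to $\sum v_r=n+k$ and $\sum s_r=n-k$; a gap of size zero simply means that two components are adjacent on the cycle but the linking edge is absent from $E_A$. Counting (subgraph, distinguished component) pairs gives $2n$ choices for the position of the distinguished component's first vertex on the labelled cycle, $\binom{n+k-1}{k-1}$ compositions of $n+k$ into $k$ positive parts $v_r$, and $\binom{n-1}{k-1}$ weak compositions of $n-k$ into $k$ non-negative parts $s_r$. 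Dividing by $k$ (the number of choices of distinguished component per subgraph) yields
\[ |\Gamma_{n,k}|=\frac{2n}{k}\binom{n+k-1}{k-1}\binom{n-1}{k-1}, \]
which a routine factorial manipulation rewrites as $\binom{2k}{k}\binom{n+k-1}{n-k}$.

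Finally, for $\DiffDel[n,k]$, a symmetric Delannoy path to $(n,n)$ using $k$ steps $R$ must also use $k$ steps $U$ and $n-k$ steps $D$, so $|\Del[n,k]|=\binom{n+k}{k,\,k,\,n-k}$. Invoking Remark~\ref{rem:diff_delannoy}, $|\DiffDel[n,k]|=|\Del[n,k]|-|\Del[n-1,k]|$; extracting the common factor $(n+k-1)!/(k!\,k!\,(n-k)!)$ and simplifying via $(n+k)-(n-k)=2k$ produces the same $\binom{2k}{k}\binom{n+k-1}{n-k}$. The main obstacle will be the cyclic double-counting for $\Gamma_{n,k}$: one must verify that the distinguished-component argument correctly handles gaps of size $s_r=0$ (adjacent components linked only by a missing edge) and the extremal case $k=n$, where all $2n$ vertices are present, no gaps remain, and the formula must reduce to $\binom{2n}{n}$, the number of $n$-edge subsets of $C_{2n}$.
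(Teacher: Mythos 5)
Your proposal is correct, and two of its three ingredients match the paper's proof closely: the identification of (1) with (2) via Theorem~\ref{thm:bij_rec_subgraph} (the paper does not spell out the forest argument vertices $-$ edges $=$ components, but it is implicit), and the count of $\Gamma_{n,k}$ by a cyclic double count. Your version of the latter — enumerating pairs (subgraph, distinguished component) by the position of the distinguished component's first vertex together with a composition of $n+k$ into $k$ positive parts and a weak composition of $n-k$ into $k$ parts, then dividing by $k$ — is exactly the paper's combination of Equations~\eqref{eq:count_graphs} and \eqref{eq:rooted_graphs}: the paper fixes the first vertex at position $1$ to define the rooted family $\bar{\Gamma}_{n,k}$, counts it by the same balls-in-boxes argument (note $\binom{n+k-1}{n}=\binom{n+k-1}{k-1}$ and $\binom{n-1}{n-k}=\binom{n-1}{k-1}$), and realises the factor $2n/k$ through an explicit rotation bijection on doubly-rooted subgraphs. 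Where you genuinely diverge is the Delannoy count: you compute $\vert\Del[n,k]\vert$ as the multinomial $\binom{n+k}{k,\,k,\,n-k}$ and subtract via Remark~\ref{rem:diff_delannoy}, whereas the paper counts $\DiffDel[n,k]$ directly by choosing the $\binom{2k}{k}$ shapes of $R$- and $U$-steps and inserting the $n-k$ diagonal steps into $2k$ slots (Lemma~\ref{lem:balls_boxes}). Your route is shorter and entirely routine, but it is a subtractive argument; the paper's insertion decomposition keeps every intermediate equality bijective, which matters for the programme announced in Remark~\ref{rem:bij_proof_rec_del} of eventually finding a direct bijection between $\Gamma_{n,k}$ and $\DiffDel[n,k]$. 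Your attention to the degenerate cases ($s_r=0$ gaps and $k=n$, where $\Del[n-1,n]=\emptyset$ and the formula collapses to $\binom{2n}{n}$) is sound and slightly more explicit than the paper.
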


The following lemma is a straightforward classical counting exercise, but as it will be re-used in various parts of the proof of the above theorem, we find it useful to state it here.

\begin{lemma}\label{lem:balls_boxes}
Let $n \geq 1$ and $k \geq 0$. The number of ways of putting $k$ unmarked (indistinguishable) balls into $n$ ordered (distinguishable) boxes is given by the binomial coefficient $\binom{n+k-1}{k}$.
\end{lemma}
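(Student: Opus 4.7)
The plan is to establish the classical \emph{stars and bars} bijection. Given a distribution of $k$ indistinguishable balls into $n$ distinguishable boxes labelled $1, \ldots, n$, with $b_i \geq 0$ balls in box $i$ so that $b_1 + \cdots + b_n = k$, I would encode it as a binary word of length $n+k-1$ formed by writing $b_1$ stars, then a bar, then $b_2$ stars, then a bar, $\ldots$, then a bar, then $b_n$ stars. This word contains exactly $k$ stars and $n-1$ bars.

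Conversely, from any binary word of length $n+k-1$ with exactly $k$ stars and $n-1$ bars, I would recover a distribution by letting $b_i$ be the number of stars between the $(i-1)$-th and $i$-th bars (with conventions that the $0$-th bar is the left end and the $n$-th bar is the right end of the word). These two operations are clearly mutually inverse, so they yield a bijection between distributions and such binary words.

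It then remains to count the binary words of length $n+k-1$ with exactly $k$ stars. Choosing the positions of the $k$ stars among the $n+k-1$ available positions gives exactly $\binom{n+k-1}{k}$ such words, which is the desired count.

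There is no real obstacle; the only minor technicality is handling the edge case $k=0$ (a single distribution corresponding to the empty word of stars and $n-1$ bars, giving $\binom{n-1}{0}=1$), which fits the formula. Since the statement is a standard combinatorial identity included purely for later reference, the stars-and-bars argument is both the natural and the most concise proof.
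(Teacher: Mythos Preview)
Your proof is correct and is essentially identical to the paper's: the paper encodes the balls-in-boxes sequence $(b_1,\ldots,b_n)$ as the binary word $0^{b_1}1\,0^{b_2}1\cdots 1\,0^{b_n}$ with $k$ zeros and $n-1$ ones, which is exactly your stars-and-bars bijection with different symbols. No changes needed.
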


\begin{proof}[Sketch of proof]
Including the $(n-1)$ delimiters between the boxes, there are $(n+k-1)$ ``spaces'' (balls + delimiters) in total for us to choose where the $k$ balls go. More formally, if $b = (b_1, \ldots, b_n)$ denotes the balls-in-boxes sequence ($b_i$ being the number of balls in box~$i$), we can define a word $w(b) := 0^{b_1} 1 0^{b_2} 1 \cdots 0^{b_{n-1}} 1 0^{b_n}$, where $0^k = 0\cdots0$ (repeated $k$ times). The map $w$ is a bijection from the desired balls-in-boxes configurations to the set of words on $\{0, 1\}$ with $k$ $0$'s and $(n-1)$ $1$'s.
\end{proof}

\begin{proof}[Proof of Theorem~\ref{thm:delannoy}]
The fact that (1) and (2) are equinumerous is an immediate consequence of Theorem~\ref{thm:bij_rec_subgraph} (a subgraph of $C_{2n}$ with $n+k$ vertices has $(n-k)$ ``missing'' vertices). 
The fact that (2) and (3) are equinumerous when removing the subscript $k$ (keeping only $n$) is noted in the CrossRefs of Entry~A277919 of the OEIS~\cite{OEIS} (``Middle diagonal gives A110170''), but we haven't been able to find a reference to this in the literature, so are unsure if it has been formally proved or not. 
In any case, the specialisation in $k$ does not seem to have been noted.

Our first claim is the following formula enumerating differed Delannoy paths.
\beq\label{eq:count_diff_delannoy}
\big\vert \DiffDel[n,k] \big\vert = \binom{2k}{k} \cdot \binom{n+k-1}{n-k},
\eeq
for all $n \geq 1$ and $ k \in \{1,\ldots,n\}$. Indeed, since a path in $\DiffDel[n,k]$ is comprised of $k$ $R$-steps, $k$ $U$-steps, and $(n-k)$ $D$-steps where the first step is not $D$, we can decompose it as follows. 
First, choose the underlying ``shape'' of the path, that is the path with all $D$ steps removed. There are $\binom{2k}{k}$ such shapes. It remains to insert the $(n-k)$ $D$-steps into a shape. 
Now notice that $D$-steps can be inserted after any $R$- or $U$-step of the shape (but not before the first step), which gives $2k$ places where $D$-steps can be inserted. 
The insertion of $D$-steps is thus equivalent to placing $(n-k)$ unmarked balls into $2k$ ordered boxes, which yields Equation~\eqref{eq:count_diff_delannoy} by Lemma~\ref{lem:balls_boxes}.

We therefore wish to show that $\Gamma_{n,k}$ is also enumerated by the right-hand side of Equation~\eqref{eq:count_diff_delannoy}. 
It turns out to be easier to enumerate a subset of $\Gamma_{n,k}$. Define $\bar{\Gamma}_{n,k}$ to be the subset of graphs $G \in \Gamma_{n,k}$ such that the vertex $1$ is in $G$ but the edge $\{2n,1\}$ is not. 
In words, $\bar{\Gamma}_{n,k}$ is the set of subgraphs of $C_{2n}$ with $n$ edges and $k$ connected components, where the vertex labelled $1$ (the ``top'' vertex in our geometric representation) is in the subgraph, and it is the ``first'' (clockwise) vertex in its connected component. We claim that:
\beq\label{eq:count_graphs}
\big\vert \bar{\Gamma}_{n,k} \big\vert = \binom{n+k-1}{n} \cdot \binom{n-1}{n-k},
\eeq
for all $n \geq 1$ and $ k \in \{1,\ldots,n\}$. To show this, we observe that the construction of a subgraph $G \in \bar{\Gamma}_{n,k}$ is equivalent to placing $n$ unmarked black balls and $(n-k)$ unmarked white balls into $k$ ordered boxes. 
The black balls correspond to the $n$ edges, placed in the $k$ connected components of $G$, while the white balls correspond to the $(n-k)$ vertices in $C_{2n} \setminus G$ (i.e. absent or ``white'' vertices of the subgraph $G$), placed into the $k$ gaps between connected components of $G$. 

We illustrate the construction on an example in Figure~\ref{fig:balls_boxes} below, with $n=4$ and $k=3$. 
Going round the cycle clockwise, the connected component starting at the top vertex has one edge, and there are no missing vertices between this component and the next connected component, so we put one black ball (for the edge) and no white balls (for the missing vertices) in Box 1. 
Then the second connected component has three edges, and is followed by one missing (white) vertex, so we put three black balls and one white ball in Box 2. 
Finally, the third connected is an isolated vertex and is followed by no missing vertices, so Box 3 is empty. This correspondence is clearly one-to-one (it is straightforward to construct its inverse), and Equation~\eqref{eq:count_graphs} follows by once again applying Lemma~\ref{lem:balls_boxes}.

\begin{figure}[ht]
\centering

\begin{tikzpicture}

\node[circle, thick, draw=black, fill=black] (1) at (0,2) {};
\node[circle, thick, draw=black, fill=black] (2) at ({sqrt(2)},{sqrt(2)}) {};
\node[circle, thick, draw=black, fill=black] (3) at (2,0) {};
\node[circle, thick, draw=black, fill=black] (4) at ({sqrt(2)},-{sqrt(2)}) {};
\node[circle, thick, draw=black, fill=black] (5) at (0,-2) {};
\node[circle, thick, draw=black, fill=black] (6) at (-{sqrt(2)},-{sqrt(2)}) {};
\node[circle, thick, draw=black] (7) at (-2,0) {};
\node[circle, thick, draw=black, fill=black] (8) at (-{sqrt(2)},{sqrt(2)}) {};
\draw[thick] (1)--(2);
\draw[thick] (3)--(4)--(5)--(6);

\begin{scope}[xshift=-1.3cm]
\draw[thick] (5,1)--(5,-1)--(6.5,-1)--(6.5,1);
\node[above, yshift=6, circle, draw, fill=black] (b1) at (5.75, -1) {};
\node[below, yshift=-6] at (5.75,-1) {Box 1};

\draw[thick] (7.5,1)--(7.5,-1)--(9,-1)--(9,1);
\node[above, yshift=6, circle, draw, fill=black] (b2) at (8, -1) {};
\node[above, yshift=6, circle, draw, fill=black] (b3) at (8.5, -1) {};
\node[yshift=-2, circle, draw, fill=black] (b4) at (8, 0) {};
\node[yshift=-2, circle, draw=black] (w1) at (8.5, 0) {};
\node[below, yshift=-6] at (8.25,-1) {Box 2};

\draw[thick] (10,1)--(10,-1)--(11.5,-1)--(11.5,1);
\node[below, yshift=-6] at (10.75,-1) {Box 3};
\end{scope}

\end{tikzpicture}

\caption{Illustrating the correspondence between $\bar{\Gamma}_{n,k}$ and placing $n$ unmarked black balls and $(n-k)$ unmarked white balls into $k$ ordered boxes, for $n=4$ and $k=3$.\label{fig:balls_boxes}}

\end{figure}
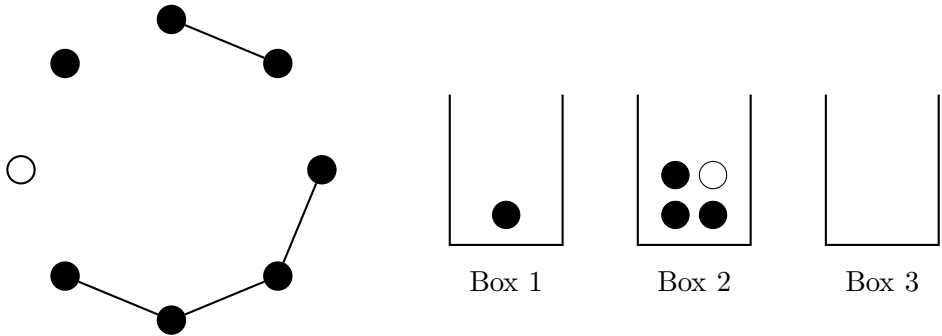

Finally, we show the following formula linking the enumeration of this subset $\bar{\Gamma}_{n,k}$ to the set $\Gamma_{n,k}$. This formula can be thought of as a type of cyclic lemma for graphs.
\beq\label{eq:rooted_graphs}
(2n) \cdot \big\vert \bar{\Gamma}_{n,k} \big\vert = k \cdot \big\vert \Gamma_{n,k} \big\vert,
\eeq
for all $n \geq 1$ and $ k \in \{1,\ldots,n\}$.
The way this is written is suggestive. 
The left-hand side counts the set of pairs $(G,i) \in \bar{\Gamma}_{n,k} \times [2n]$, i.e. the set of graphs $G \in \bar{\Gamma}_{n,k}$ rooted at a vertex of $C_{2n}$. 
Similarly, the right-hand side counts the set of graphs $G \in \Gamma_{n,k}$ rooted at a connected component of $G$. 

In fact, it is slightly more convenient to think of the two sides counting certain \emph{doubly-rooted} subgraphs of $C_{2n}$. A doubly-rooted subgraph is a triple $\mathcal{G} = (G,i,c)$ where $G \in \Gamma_{n,k}$ is a sub-graph of $C_{2n}$, $i \in [2n]$ is a vertex of $C_{2n}$ (not necessarily a vertex of the subgraph $G$), and $c$ is a connected component of $G$. Now the left-hand side of Equation~\eqref{eq:rooted_graphs} counts doubly-rooted subgraphs $(G,i,c)$ such that $G \in \bar{\Gamma}_{n,k}$ (i.e. $1$ is the clockwise first vertex of the connected component $c$). An example is shown on the left of Figure~\ref{fig:mark_graphs}. The right-hand side of Equation~\eqref{eq:rooted_graphs} counts doubly-rooted subgraphs $(G,i,c)$ such that $i=1$ (i.e. the ``top'' vertex is the root). We exhibit a simple bijection between these two sets of doubly-rooted subgraphs.

Consider a doubly-rooted subgraph $\mathcal{G} = (G,i,c)$ with $G \in \bar{\Gamma}_{n,k}$, i.e. $1$ is the clockwise first vertex of the connected component $c$.
We construct a doubly-rooted subgraph $
\mathcal{G}' = (G',i',c')$ by rotating $G$ anti-clockwise through $i-1$ vertices, so that the vertex that was previously labelled $i$ is now labelled $i'=1$. The connected component $c'$ is the image of $c$ under this rotation, i.e. a path of same length as $c$, whose clockwise first vertex is now $(2n+1-i)$.

The map $\mathcal{G} \mapsto \mathcal{G'}$ gives the desired bijection. By construction it maps a doubly-rooted subgraph whose connected component root starts at vertex $1$ to a doubly-rooted subgraph whose root vertex is $1$. Furthermore, the inverse bijection is simply the inverse rotation. Namely, starting from $\mathcal{G}' = (G',1,c')$, we get the inverse image $\mathcal{G} = (G, i, c)$ by rotating $\mathcal{G}'$ so that its connected component root starts at vertex $1$. 

The bijection between rooted graphs is illustrated on Figure~\ref{fig:mark_graphs} below. 
On the left, a doubly-roooted subgraph $\mathcal{G} = (G,i,c)$ with $G \in \bar{\Gamma}_{n,k}$. We represent the vertex root in blue (here labelled $i = 7$), and the connected component root (which here starts at the top vertex $j=1$) in red.
On the right, the corresponding doubly-rooted subgraph $
\mathcal{G}' = (G',1,c')$, obtained by rotating $\mathcal{G}$ counterclockwise through $i-1 = 6$ vertices (or equivalently clockwise through $2$ vertices), so that the vertex root is now the top vertex labelled $i'=1$, and the connected component root starts at vertex labelled $j' = 2n + 1 - i = 3$.

\begin{figure}[ht]
\centering

\begin{tikzpicture}

\begin{scope}[xshift=-6cm]
\node[circle, thick, draw=red, fill=red] (1) at (0,2) {};
\node[circle, thick, draw=red, fill=red] (2) at ({sqrt(2)},{sqrt(2)}) {};
\node[circle, thick, draw=red, fill=red] (3) at (2,0) {};
\node[circle, thick, draw=black] (4) at ({sqrt(2)},-{sqrt(2)}) {};
\node[circle, thick, draw=black, fill=black] (5) at (0,-2) {};
\node[circle, thick, draw=black, fill=black] (6) at (-{sqrt(2)},-{sqrt(2)}) {};
\node[circle, thick, draw=blue] (7) at (-2,0) {};
\node[circle, thick, draw=black, fill=black] (8) at (-{sqrt(2)},{sqrt(2)}) {};
\draw[thick] (6)--(5);
\draw[thick,red] (1)--(2)--(3);
\node[left, xshift=-5pt] at (-2,0) {\textcolor{blue}{$i=7$}};
\node[above, yshift=4pt] at (0,2) {\textcolor{red}{$j=1$}};
\end{scope}

\node[circle, thick, draw=blue] (1) at (0,2) {};
\node[circle, thick, draw=black, fill=black] (2) at ({sqrt(2)},{sqrt(2)}) {};
\node[circle, thick, draw=red, fill=red] (3) at (2,0) {};
\node[circle, thick, draw=red, fill=red] (4) at ({sqrt(2)},-{sqrt(2)}) {};
\node[circle, thick, draw=red, fill=red] (5) at (0,-2) {};
\node[circle, thick, draw=black] (6) at (-{sqrt(2)},-{sqrt(2)}) {};
\node[circle, thick, draw=black, fill=black] (7) at (-2,0) {};
\node[circle, thick, draw=black, fill=black] (8) at (-{sqrt(2)},{sqrt(2)}) {};
\node[above, yshift=4pt] at (0,2) {\textcolor{blue}{$i'=1$}};
\node[right, xshift=5pt] at (2,0) {\textcolor{red}{$j'=3$}};
\draw[thick] (7)--(8);
\draw[thick,red] (3)--(4)--(5);

\end{tikzpicture}

\caption{Illustrating the bijection that established Equation~\eqref{eq:rooted_graphs} \label{fig:mark_graphs}.}

\end{figure}
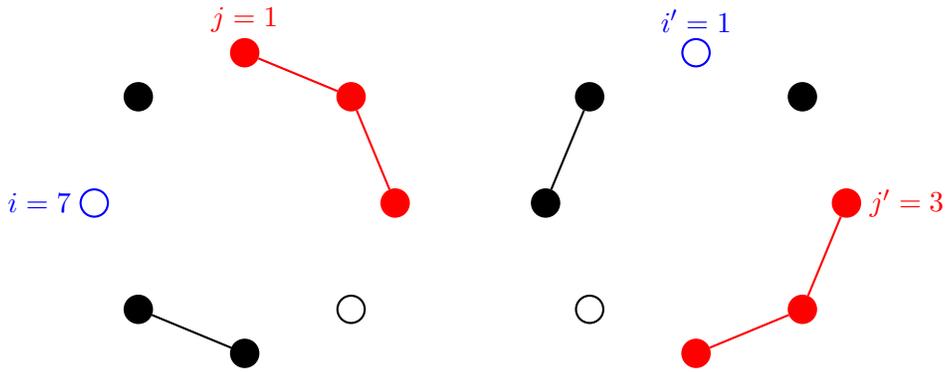

Combining Equations~\eqref{eq:count_diff_delannoy}, \eqref{eq:count_graphs} and \eqref{eq:rooted_graphs}, it suffices to check the following binomial equality
\beq\label{eq:binom_eq}
 \dfrac{2n}{k} \cdot \binom{n+k-1}{n} \cdot \binom{n-1}{n-k} = \binom{2k}{k} \cdot \binom{n+k-1}{n-k},
\eeq
which can be done by simply expanding the binomial coefficients. This completes the proof of Theorem~\ref{thm:delannoy}.
\end{proof}

\begin{remark}\label{rem:bij_proof_rec_del}
While our proofs of the combinatorial equalities~\eqref{eq:count_diff_delannoy}, \eqref{eq:count_graphs} and \eqref{eq:rooted_graphs} are all bijective, and a bijective proof can undoubtedly be found of the binomial equality~\eqref{eq:binom_eq}, we have been unable to find a direct bijection between the sets $\Gamma_{n,k}$ and $\DiffDel[n,k]$. 
It would be useful if such a bijection (offering a direct bijective proof of Theorem~\ref{thm:delannoy}) could be exhibited in the future.
\end{remark}


\section{The sandpile model on fan graphs}\label{sec:sandpile_fan}

In this section, we study the sandpile model on fan graphs $F_n$. 
Our main goals in this section are to show that the recurrent configurations on $F_n$ are in bijection with certain subgraphs of the path graph $P_n$, and that they are counted by certain lattice paths. As in the wheel graph case, this specialises according to the level of the configuration. 
As in Sections~\ref{sec:sandpile_wheel} and \ref{sec:delannoy}, it is convenient to consider the geometric representations of $F_n$ and $P_n$ where $P_n$ is simply a line of vertices $1,\ldots,n$ from left-to-right, and thus $F_n$ is as in Figure~\ref{fig:cycle_wheel_fan}. 
The tools we use are similar to those of these two sections, so we will allow ourselves to be briefer here. We proceed as follows.

Firstly, in Section~\ref{subsec:rec_fan_char_enum}, we define a notion of canonical minimal recurrent configuration, as in Section~\ref{sec:sandpile_wheel}. This allows us to exhibit a bijection between $\Rec[F_n]$ and a set of properly-marked words on the alphabet $\{L,R\}$. We then exhibit a natural bijection between these properly-marked words and subgraphs of $P_n$ whose vertex set contains the right-most vertex $n$.
In Section~\ref{subsec:kimb}, we partition these properly-marked words according to their number of unmarked $L$-elements. Using similar decompositions to those in the proof of Theorem~\ref{thm:delannoy}, we then show that they are enumerated by the lattice paths in question.

\begin{remark}\label{rem:SR_DR_fan}
From Theorem~\ref{thm:MR_Or}, Remark~\ref{rem:sink-rooted_or_wheel_fan} and the observation that the path graph $P_n$ is acyclic (hence so are all its orientations), we deduce that the sets of recurrent configurations for the ASM and the SSM on $F_n$ are the same. 
Thus, we will simply talk of recurrent configurations, and denote $\Rec[F_n]$ their set.
\end{remark}

\subsection{Characterisation and enumeration of \texorpdfstring{$\Rec[F_n]$}{Rec(Fn)}}\label{subsec:rec_fan_char_enum}

In the spirit of Theorem~\ref{thm:wheel_rec_words}, we have a similar characterisation of the set $\Rec[F_n]$. 
Note that this time $(i,j)$ refers to the standard open integer interval, rather than the cyclic interval of the wheel graph case.

\begin{theorem}\label{thm:fan_rec_words}
Let $c \in \{0,1,2\}^n$ with $c_1,c_n \leq 1$ be a stable configuration on $F_n$. Then $c$ is recurrent if, and only if, for all pairs $i,j \in [n]$ such that $i<j$ and $c_i=c_j=0$, there exists $k \in (i,j)$ such that $c_k = 2$. 
\end{theorem}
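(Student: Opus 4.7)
The plan is to mirror the proof of Theorem~\ref{thm:wheel_rec_words}, using the orientation characterisation but exploiting the fact that $P_n$ is already acyclic (so every orientation is acyclic and ASM and SSM coincide on $F_n$ by Remark~\ref{rem:SR_DR_fan}). More precisely, by Theorem~\ref{thm:DR_AcOr} together with Remark~\ref{rem:sink-rooted_or_wheel_fan}, the configuration $c$ is recurrent if and only if there exists an orientation $\OO$ of the path graph $P_n$ satisfying $c_i \geq \In[i]$ for every $i \in [n]$. So the theorem reduces to showing that such an orientation exists if and only if the stated ``interior-$2$'' condition on zero positions holds.

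For the forward direction, I would assume a compatible $\OO$ exists and take any $i < j$ with $c_i = c_j = 0$. The compatibility inequality forces $\In[i] = \In[j] = 0$, so both are sources of $\OO$; in particular the edge $\{i,i+1\}$ is directed $i \orighta i+1$ and the edge $\{j-1,j\}$ is directed $j \orighta j-1$. Define
\[
k := \max\{\,k' \in [i+1, j-1]\ :\ \text{edges } \{i, i+1\},\ldots, \{k'-1, k'\} \text{ are all directed rightward}\,\}.
\]
This set is non-empty because $i+1$ is in it, and $k < j$ because the edge $\{j-1, j\}$ points leftward. By maximality of $k$, the edge $\{k, k+1\}$ is directed $k+1 \orighta k$, so $\In[k] = 2$, and compatibility gives $c_k \geq 2$, hence $c_k = 2$ by stability. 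This $k$ lies in $(i,j)$, as required.

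For the converse, I would explicitly build a compatible orientation of $P_n$ from $c$. If $c$ has no zero, I simply orient every edge rightward: the only vertex with positive in-degree has in-degree $1$ on a non-zero entry, which is compatible. Otherwise, let $i_1 < i_2 < \cdots < i_m$ be the positions of zeros. Between consecutive zeros $i_\ell$ and $i_{\ell+1}$ the hypothesis provides some $k_\ell \in (i_\ell, i_{\ell+1})$ with $c_{k_\ell} = 2$; I orient edges $i_\ell \orighta i_\ell + 1 \orighta \cdots \orighta k_\ell$ and $i_{\ell+1} \orighta i_{\ell+1} - 1 \orighta \cdots \orighta k_\ell$. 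To the left of $i_1$ I orient $i_1 \orighta i_1 - 1 \orighta \cdots \orighta 1$, and symmetrically to the right of $i_m$. A quick vertex-by-vertex check then shows compatibility: every zero becomes a source, each designated $k_\ell$ receives in-degree $2$ (matched by $c_{k_\ell} = 2$), and every other vertex gets in-degree at most $1$ on a positive entry; the stability constraints $c_1, c_n \leq 1$ are automatically respected since those vertices are either sources or receive in-degree exactly $1$.

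The only real subtlety is ensuring the construction stays consistent at the two ends of the path and when zeros are absent, so the main bookkeeping obstacle is simply the case analysis on boundary vertices $1$ and $n$; the interior argument is a direct translation of the wheel-graph argument with the cyclic wrap-around removed.
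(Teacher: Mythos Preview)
Your proposal is correct and follows exactly the approach the paper intends: it explicitly says the proof is ``essentially analogous to that of Theorem~\ref{thm:wheel_rec_words}, with minor adjustments to take into account the removal of the cycle edge $\{n,1\}$'', and your adaptation of both directions (sources force a target between them; build the orientation by making zeros sources and picking a $2$-vertex as target in each gap, with the obvious leftward/rightward orientation at the two ends) is precisely that. One tiny wording slip: in the no-zero case, orienting all edges rightward gives \emph{every} vertex $2,\ldots,n$ in-degree~$1$, not just one vertex, but since all entries are $\geq 1$ this is still compatible.
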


The proof of this result is essentially analogous to that of Theorem~\ref{thm:wheel_rec_words}, with minor adjustments to take into account the removal of the cycle edge $\{n,1\}$, so we omit it here. 
Similarly, the minimal recurrent configurations are configurations with at least one pair $(i,j)$ such that $c_i=c_j=0$ and for each such consecutive pair the $k$ is unique, as well as configurations where exactly one vertex has no grains, and all other vertices have (exactly) one grain.

As in Section~\ref{sec:sandpile_wheel}, we construct a bijection between recurrent configurations on $F_n$ and certain marked orientations of the path $P_n$. We proceed as follows. Consider the path graph $P_n^0$ with an extra vertex $0$ and edge $\{0,1\}$. Given a recurrent configuration $c \in \Rec[F_n]$, we define a marked orientation $(\OO,M)$ of $P_n^0$ as follows:
\begin{enumerate}
\item Direct the edge $0 \olefta 1$.
\item For $i \geq 1$, assume we have given a direction to $\{i-1,i\}$ in $\OO$.
  \begin{enumerate}
  \item If $i-1 \olefta i$, then direct $i \orighta i+1$ if $c_i=0$. Otherwise, direct $i \olefta i+1$, and mark vertex $i$ if $c_i = 2$.
  \item\label{case} If $i-1 \orighta i$, then direct $i \orighta i+1$ if $c_i=1$ and $i \olefta i+1$ if $c_i=2$.
  \end{enumerate}
\item Mark vertex $n$ if $n-1 \olefta n$ and $c_n=1$.
\end{enumerate}

Note that Theorem~\ref{thm:fan_rec_words} implies that $c_i=0$ is impossible in Case~\ref{case}. 
In words, the marked orientation is constructed left-to-right as follows. So long as $c_i \geq 1$, direct $i \olefta i+1$ i.e. ``leftwards'', marking vertex $i$ if $c_i=2$. 
When encountering $i$ such that $c_i=0$, start directing all subsequent edges $j \orighta j+1$ i.e. ``rightwards'' for $j \geq i$ until encountering the first $j$ such that $c_j = 2$ (or reaching the end vertex $n$), at which point we start directing edges leftwards again, and repeat the process. 
Mark the rightmost vertex $n$ if the last edge $\{n-1,n\}$ is directed leftwards and $c_n = 1$.

Removing the (always leftwards) edge $\{0,1\}$, we obtain a marked orientation $(\OO,M)$ of the path graph $P_n$ where all marked vertices $i$ must have $i+1 \orighta i \orighta i-1$ (with the convention that $n+1 \orighta n$). 
As in the wheel graph case, the underlying orientation $\OO$ defines a minimal recurrent configuration on $F_n$, which can be viewed as the canonical minimal recurrent configuration associated with $c$, while the marked vertices represent the locations of ``excess grains'' in $c$ (see Remark~\ref{rem:excess_grains}).

In the path graph case, it is slightly more convenient to mark the edges instead of the vertices. 
For this, if a vertex $i$ is marked, we simply move the mark to its left edge $\{i-1,i\}$ (necessarily directed $i-1 \olefta i$). Since the leftmost vertex $1$ can never be marked by construction ($c_1 < 2$ since $c$ is stable), there is no issue in doing this. 
Transforming the marked orientation into a word on the alphabet $\{L,R\}$ with markings on certain $L$-elements yields the following.

\begin{definition}\label{def:pm_words}
A \emph{properly-marked $\{L,R\}$-word} is a word on the alphabet $\{L^u, L^m, R\}$ such that no occurrence of $L^m$ is followed by an occurrence of $R$. 
We refer to the occurrences of $L^m$ as the \emph{marked} $L$-elements (or simply marked elements), and the occurrences of $L^u$ as the \emph{unmarked} $L$-elements (or simply unmarked elements). 
The set of properly marked $\{L,R\}$-words of length $n$ is denoted $\PMW_n$.
\end{definition}

Note that the condition is equivalent to every marked $L$-element being either followed by another $L$-element (marked or not), or being the last element in the word. We have therefore constructed a map $\Phi_F:\Rec[F_n] \rightarrow \PMW_{n-1}$.  The equivalent of Theorem~\ref{thm:bij_rec_marked_or} is the following result (again, the proof is analogous, so we omit it here).

\begin{theorem}\label{thm:bij_rec_fan_pm_words}
The map $\Phi_F:\Rec[F_n] \rightarrow \PMW_{n-1}$ is a bijection from the set of recurrent configurations on the fan graph $F_n$ to the set of properly-marked $\{L,R\}$-words of length $(n-1)$. Moreover, for $c \in \Rec[F_n]$, the level of the configuration $c$ is equal to the number of marked elements in the word $\Psi_F(c)$.
\end{theorem}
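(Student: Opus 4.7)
The plan is to mirror the strategy used for Theorem~\ref{thm:bij_rec_marked_or} in the wheel case, adapted to the linear geometry of $F_n$. First I would verify that $\Phi_F$ is well-defined, i.e.\ that the word produced actually lies in $\PMW_{n-1}$. A vertex $i$ is marked during the construction only when we are in the ``leftwards phase'' (so the edge $\{i-1,i\}$ carries direction $i-1 \olefta i$, giving an $L$ at position $i-1$) and $c_i=2$. After marking, the next edge $\{i,i+1\}$ is still directed leftwards by construction (Case (a) with $c_i\geq 1$), hence an $L^m$ is immediately followed by another $L$-element; the only exception is when $i=n$, in which case the marked $L^m$ sits at the last position of the word. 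In either case no $L^m$ is followed by an $R$, which is exactly the condition of Definition~\ref{def:pm_words}.

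Next I would exhibit the inverse $\Psi_F: \PMW_{n-1} \to \Rec[F_n]$. Given a properly-marked word $w$, reconstruct a marked orientation $(\OO,M)$ of $P_n$ by reading the letters $L,R$ as directions of the successive edges and placing marks on the right endpoint of each $L^m$ edge (and moving a mark on the rightmost edge onto vertex $n$ if applicable). This $\OO$ is a $0$-rooted orientation of $F_n$ once we re-attach the sink $0$ via the convention $0 \olefta 1$, so by Theorem~\ref{thm:MR_Or} the configuration $\tilde c$ defined by $\tilde c_i := \In[i]$ is minimal recurrent on $F_n$. Setting $c := \tilde c + \mathds{1}_M$, Proposition~\ref{pro:po_rec} guarantees $c\in \Rec[F_n]$, and one checks stability from the properly-marked condition (a marked endpoint of an $L$ means both incident edges on $P_n$ point into that vertex, so $\tilde c_i=2$ would prevent marking; hence $\tilde c_i\leq 1$ at any marked $i$, and $c_i\leq 2$ everywhere, with $c_1,c_n\leq 1$ by construction). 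Verifying $\Phi_F\circ\Psi_F=\mathrm{id}$ and $\Psi_F\circ\Phi_F=\mathrm{id}$ is then routine since both operations read the configuration/word letter-by-letter in the same left-to-right manner.

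For the level statement, I would essentially re-use the logic of Proposition~\ref{pro:can_min_rec} and Remark~\ref{rem:excess_grains}: the underlying orientation $\OO$ (forgetting marks) encodes the canonical minimal recurrent configuration $m(c)$ associated with $c$, and each mark records precisely one ``excess grain'' — the difference $c_i - m(c)_i$ is $1$ at a marked vertex and $0$ otherwise. Since $m(c)$ is minimal recurrent and thus has level $0$, summing over vertices gives
\beq
\level(c) \;=\; |c| - |m(c)| \;=\; |M|,
\eeq
which is the number of marked elements in $\Phi_F(c)$.

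The main obstacle I anticipate is not any single step in isolation but rather making the left-to-right case analysis watertight at the boundary vertices $1$ and $n$, where the cycle edge present in $W_n$ is missing: one must verify that stability $c_1,c_n\leq 1$ is respected by $\Psi_F$, and conversely that $\Phi_F$ never tries to mark vertex $1$. Both facts follow from the construction, but writing them out carefully is where the analogy with the wheel case requires genuine (if minor) adjustment.
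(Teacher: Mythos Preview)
Your approach is exactly what the paper intends: it omits the proof of Theorem~\ref{thm:bij_rec_fan_pm_words} entirely, stating only that it is analogous to the wheel case (Theorem~\ref{thm:bij_rec_marked_or}), and your outline carries out precisely that analogy via the inverse $\Psi_F(\OO,M)=\tilde c+\mathds{1}_M$ and the level computation through the canonical minimal configuration. Two small wording fixes: when lifting $\OO$ to a $0$-rooted orientation of $F_n$ you must direct \emph{all} sink edges $\{0,i\}$ toward $0$ (not just $\{0,1\}$, which pertains to $P_n^0$), and in the stability check the left edge at a marked vertex $i$ is \emph{outgoing} from $i$ (it is an $L$), so $\tilde c_i=1$ rather than both edges pointing in---your conclusion $\tilde c_i\leq 1$ is correct, only the justification sentence is inverted.
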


An example of the bijection $\Phi_F$, applied to the recurrent configuration (read from left to right) $c = (1,1,0,1,2,2,0,2,1) \in \Rec[F_9]$, is illustrated in Figure~\ref{fig:bij_rec_fan_marked_word}. 
The marked orientation of the path $P_9$ is shown, with marks (in red) on both the vertices and their immediate left edges for convenience. 
This maps to the properly-marked $\{L,R\}$-word $w = \Phi_F(c) \in \PMW_8$, which is shown above the orientation ($w = L^uL^uRRL^mL^uRL^m$).

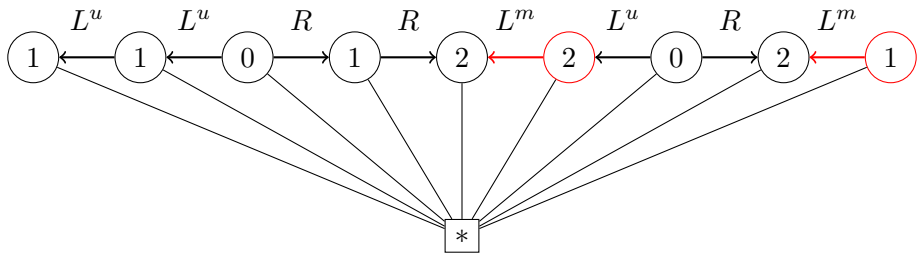
\begin{figure}[ht]

\centering

\begin{tikzpicture}[scale=0.95]

\node[rectangle, draw=black] (0) at (0,-2.5) {$*$};
\node[circle, draw=black] (1) at (-6,0) {$1$};
\node[circle, draw=black] (2) at (-4.5,0) {$1$};
\node[circle, draw=black] (3) at (-3,0) {$0$};
\node[circle, draw=black] (4) at (-1.5,0) {$1$};
\node[circle, draw=black] (5) at (-0,0) {$2$};
\node[circle, draw=red] (6) at (1.5,0) {$2$};
\node[circle, draw=black] (7) at (3,0) {$0$};
\node[circle, draw=black] (8) at (4.5,0) {$2$};
\node[circle, draw=red] (9) at (6,0) {$1$};
\foreach \x in {1,...,9}
	\draw (0)--(\x);
\draw[thick,->] (2)--(1);
\draw[thick,->] (3)--(2);
\draw[thick,->] (3)--(4);
\draw[thick,->] (4)--(5);
\draw[thick,->,red] (6)--(5);
\draw[thick,->] (7)--(6);
\draw[thick,->] (7)--(8);
\draw[thick,->,red] (9)--(8);

\node[yshift=0.5cm] (w1) at (-5.25,0) {$L^u$};
\node[yshift=0.5cm] (w2) at (-3.75,0) {$L^u$};
\node[yshift=0.5cm] (w3) at (-2.25,0) {$R$};
\node[yshift=0.5cm] (w4) at (-0.75,0) {$R$};
\node[yshift=0.5cm] (w5) at (0.75,0) {$L^m$};
\node[yshift=0.5cm] (w6) at (2.25,0) {$L^u$};
\node[yshift=0.5cm] (w7) at (3.75,0) {$R$};
\node[yshift=0.5cm] (w8) at (5.25,0) {$L^m$};

\end{tikzpicture}

\caption{Illustrating the bijection $\Phi_F$.
\label{fig:bij_rec_fan_marked_word}
}

\end{figure}

As in the wheel graph case, there is a subgraph interpretation of properly-marked words. Let $w = w_1 \cdots w_{n-1}$ be a properly-marked $\{L,R\}$-word of length $(n-1)$. We construct a subgraph $G = (V,E) = T(w)$ of the path graph $P_n$ by letting $V = \big\lbrace i \in [n]; \, w_i \in \{L^u, L^m\} \big\rbrace \cup \{ n \}$, and $E = \big\lbrace \{i, i+1\}; w_i = L^m \big\rbrace$.For example, if $w = L^uL^uRRL^mL^uRL^m$ is the properly-marked word from Figure~\ref{fig:bij_rec_fan_marked_word}, then the corresponding subgraph has vertex set $V = \{1, 2, 5, 6, 8, 9 \}$ and edge set $E = \big\lbrace \{5, 6\}, \{8, 9\} \big\rbrace$.

In words, if we think of the word $w$ as labelling the edges of $P_n$ as in Figure~\ref{fig:bij_rec_fan_marked_word}, then the vertices of $T(w)$ are those whose right-incident edge is labelled with an $L$-element (marked or not), as well as the right-most vertex $n$, while the edges are simply the edges labelled $L^m$. Since each $L^m$ is either followed by an $L$-element, or is the right-most edge, this implies that the map is well defined, i.e. that $T(w)$ is indeed a subgraph of $P_n$.

Moreover, given a subgraph $G = (V,E)$ of $P_n$ such that $n \in V$, we can define its inverse $w = w_1 \cdots w_{n-1} = T'(G)$ by setting:
$$ w_i = \begin{cases} 
L^m & \text{if } \{i, i+1\} \in E \\
L^u & \text{if } i \in V \mbox{ and } \{i, i+1\} \notin E \\
R & \text{otherwise (i.e. if } i \notin V \mbox{)}.
\end{cases} $$
By construction, $T'(G)$ is a properly-marked $\{L,R\}$-word, and the maps $T$ and $T'$ are inverses of each other. We also note that for $i < n$, then $i$ is the right-most vertex of a connected component of $G$ if, and only if, $w_i = L^m$. This implies that the number of connected components of $G$ is equal to the number of unmarked $L$-elements plus one (for the right-most connected component, whose right-most vertex is always $n$). We summarise this in the following proposition.

\begin{proposition}\label{pro:bij_pmword_subgraph}
The map $T: w \mapsto T(w)$ is a bijection between the set of properly-marked $\{L,R\}$-word of length $(n-1)$ and the set of subgraphs of the path graph $P_n$ whose vertex set contains $n$. Moreover, the number of marked, resp.\ unmarked, $L$-elements of $w$ is equal to the number of edges, resp.\ to one plus the number of connected components, of $T(w)$.
\end{proposition}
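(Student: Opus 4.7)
The plan is to verify three things in turn, all of which are essentially bookkeeping once the definitions of $T$ and $T'$ from the surrounding discussion are in hand: (i) that $T(w)$ really is a subgraph of $P_n$ whose vertex set contains $n$; (ii) that $T$ and $T'$ are mutually inverse; and (iii) that the two statistics match up.

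For (i), the only non-trivial point is that every edge of $T(w)$ has both endpoints in $V$. An edge arises from $w_i = L^m$, and the endpoint $i$ lies in $V$ by the very definition of $V$. For $i+1$, I need either $i+1 = n$ (in which case $n \in V$ always) or $w_{i+1} \in \{L^u, L^m\}$. This is exactly the defining condition of a properly-marked word: no $L^m$ is immediately followed by an $R$. This is the one place where the combinatorial constraint on $\PMW_{n-1}$ does genuine work, and I expect it to be the main (though modest) obstacle.

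For (ii), I would check $T(T'(G)) = G$ and $T'(T(w)) = w$ index by index; the three letters $L^u$, $L^m$, $R$ partition indices into exactly the three cases of the definition of $T'$, so both compositions reduce to a direct case check. I also need to verify that $T'(G)$ is a properly-marked word: if $\{i, i+1\} \in E$ then $i+1 \in V(G)$ (since $G$ is a subgraph of $P_n$), so $w_{i+1} \in \{L^u, L^m\}$ whenever $i+1 < n$, meaning $w_i = L^m$ is indeed never followed by $R$.

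Finally, for (iii), the equality between the number of marked $L$-elements and $|E|$ is immediate from the definition $E = \{\{i, i+1\} : w_i = L^m\}$. For the component count, I would observe that each connected component of $T(w)$ contains a unique right-most vertex: vertex $n$ is always right-most in its own component, and for $i < n$ with $i \in V$, vertex $i$ is right-most in its component iff $\{i, i+1\} \notin E$, i.e.\ iff $w_i = L^u$. This yields a bijection between components of $T(w)$ and the set $\{n\} \cup \{i < n : w_i = L^u\}$, giving the announced count. No step is a real obstacle; the conceptual heart of the proof is simply noting that the properly-marked condition is \emph{precisely} what guarantees edges have both endpoints inside the prescribed vertex set.
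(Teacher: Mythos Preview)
Your proposal is correct and follows essentially the same approach as the paper: the paper also establishes the bijection by exhibiting the explicit inverse $T'$, checks well-definedness via the properly-marked condition, and identifies connected components with rightmost vertices (in particular noting that $i<n$ is rightmost in its component exactly when $w_i = L^u$). One minor point worth flagging: your derivation, like the paper's own discussion, gives that the number of connected components equals one \emph{plus} the number of unmarked $L$-elements, whereas the proposition as stated has the ``plus one'' on the wrong side; this is a typo in the statement, not a flaw in your argument, so you might note the discrepancy rather than simply write ``giving the announced count.''
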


Proposition~\ref{pro:bij_pmword_subgraph} and Theorem~\ref{thm:bij_rec_fan_pm_words} immediately combine for the following.

\begin{theorem}\label{thm:rec_fan_subgraph_path}
The map $T \circ \Phi_F$ defines a bijection from the set of recurrent configurations on the fan graph $F_n$ to the set of subgraphs of the path graph $P_n$ whose vertex set contains $n$. Moreover, this bijection maps the level of a recurrent configuration $c$ to the number of edges in the corresponding subgraph $G$.
\end{theorem}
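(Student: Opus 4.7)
The plan is essentially to observe that the theorem is a direct consequence of composing the two bijections already established immediately above it, namely Theorem~\ref{thm:bij_rec_fan_pm_words} and Proposition~\ref{pro:bij_pmword_subgraph}, so almost all the work has been done and only a short combination argument is required.

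First I would recall the factorisation $T \circ \Phi_F : \Rec[F_n] \to \PMW_{n-1} \to \Sub[P_n]$, where the target of the second map is restricted to subgraphs containing the vertex $n$. The bijectivity then follows at once: $\Phi_F$ is a bijection by Theorem~\ref{thm:bij_rec_fan_pm_words}, and $T$ is a bijection onto the set of subgraphs of $P_n$ whose vertex set contains $n$ by Proposition~\ref{pro:bij_pmword_subgraph}. A composition of bijections is a bijection, which gives the first claim.

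For the level/edge correspondence, I would chain the two statistic tracking assertions. By Theorem~\ref{thm:bij_rec_fan_pm_words}, for $c \in \Rec[F_n]$, $\level(c)$ equals the number of marked $L$-elements in the word $\Phi_F(c) \in \PMW_{n-1}$. By Proposition~\ref{pro:bij_pmword_subgraph}, the number of marked $L$-elements in a properly-marked word $w$ is precisely the number of edges in $T(w)$. Setting $w := \Phi_F(c)$ and $G := T(w) = (T \circ \Phi_F)(c)$, we get $\level(c) = \vert E(G) \vert$, as desired.

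There is no real obstacle here, since the statement is a pure concatenation of the two preceding results; the only thing to be careful about is to note explicitly that the image of $\Phi_F$ is the full set $\PMW_{n-1}$ (so that the domain of $T$ matches), which is already part of Theorem~\ref{thm:bij_rec_fan_pm_words}. Optionally, I would also remark that the auxiliary statistic on the subgraph side (one plus the number of connected components equals the number of unmarked $L$-elements in the word) could be translated back through $\Phi_F$ to a natural sandpile statistic, in analogy with the role played by the $01^*$-weight $\w(c)$ in the wheel graph case; but this is not needed for the statement as written.
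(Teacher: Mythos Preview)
Your proposal is correct and matches the paper's own approach exactly: the paper simply states that Proposition~\ref{pro:bij_pmword_subgraph} and Theorem~\ref{thm:bij_rec_fan_pm_words} ``immediately combine'' to give the result, which is precisely the composition-of-bijections argument you outline. Your explicit tracking of the level statistic through the two maps is a faithful unpacking of what the paper leaves implicit.
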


We now provide a counting formula for the set of properly-marked words $\PMW_n$. In fact, we count these according to the numbers of both marked and unmarked $L$-elements. This also counts subgraphs of $P_n$ containing $n$ according to their numbers of edges and connected components, by Proposition~\ref{pro:bij_pmword_subgraph}.

\begin{proposition}\label{pro:pm_words_enumeration}
Let $n \geq 1$ and $k,r \geq 0$ such that $k+r \leq n-1$. The number of properly-marked $\{L,R\}$-words of length $(n-1)$ with $k$ marked $L$-elements and $r$ unmarked $L$-elements is equal to $\binom{n-k-1}{r} \cdot \binom{r+k}{r}$.
\end{proposition}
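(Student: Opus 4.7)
The plan is to decompose each properly-marked word into an ``underlying shape'' (its $\{L^u,R\}$-subword) together with a choice of placements for the marked $L$-elements, in direct analogy with the decomposition of differed Delannoy paths used in the proof of Theorem~\ref{thm:delannoy}.

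First, I would strip out all $L^m$-elements from a word $w \in \PMW_{n-1}$ with $k$ marked and $r$ unmarked $L$-elements. What remains is a word of length $n-1-k$ on the alphabet $\{L^u,R\}$ containing exactly $r$ occurrences of $L^u$ and $n-1-k-r$ occurrences of $R$. There are $\binom{n-1-k}{r}$ such underlying shapes, and the map $w \mapsto \mathrm{shape}(w)$ is clearly well-defined.

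Next, I would reconstruct $w$ from its shape by inserting the $k$ indistinguishable $L^m$-letters into the gaps of the shape. A shape of length $n-1-k$ has $n-k$ gaps (one before each letter plus one at the end). Inserting one or more $L^m$'s into a single gap produces a maximal block of consecutive $L^m$'s at that position; within such a block every $L^m$ is followed by another $L^m$ (which is an $L$-element), so the only constraint comes from the \emph{last} $L^m$ of the block, which must be followed by an $L$-element or be the last letter of $w$. Since the letter immediately after the gap is determined by the shape, the valid gaps (those which can accommodate any positive number of $L^m$'s) are exactly the $r$ gaps sitting immediately before an $L^u$, together with the single gap at the very end of the shape: $r+1$ in total. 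Gaps receiving zero $L^m$'s impose no constraint, so the remaining $(n-k-1)-r$ ``invalid'' gaps (those lying immediately before an $R$) must simply be left empty.

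Finally, I would apply Lemma~\ref{lem:balls_boxes} to count the number of ways of distributing $k$ indistinguishable balls (the $L^m$'s) into the $r+1$ ordered boxes (the valid gaps): this is $\binom{(r+1)+k-1}{k} = \binom{r+k}{r}$. Multiplying by the number of shapes gives $\binom{n-k-1}{r}\cdot\binom{r+k}{r}$, as desired. The map sending $w$ to the pair (shape, ball-distribution) is a bijection by construction, its inverse being ``insert the prescribed number of $L^m$'s into each prescribed gap''. The only real subtlety is the characterisation of the valid gaps; the key observation that makes the argument clean is that the proper-marking condition depends solely on the letter immediately to the right of each maximal block of $L^m$'s, and hence reduces to an independent choice of \emph{how many} $L^m$'s to place in each allowed gap.
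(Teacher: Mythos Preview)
Your proof is correct and follows essentially the same approach as the paper: remove the $L^m$-elements to obtain a shape counted by $\binom{n-1-k}{r}$, then reinsert the $k$ marked elements into the $r+1$ admissible positions (immediately before each $L^u$ and at the rightmost end), applying Lemma~\ref{lem:balls_boxes} to get the factor $\binom{r+k}{r}$. Your justification of which gaps are valid is in fact slightly more explicit than the paper's, but the argument is the same.
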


\begin{proof}
The proof is similar to that of the enumeration of differed Delannoy paths in Equation~\eqref{eq:count_diff_delannoy}. 
First we choose the underlying ``shape'' of the word, that is the word with marked elements removed. 
There are $\binom{n-1-k}{r}$ possibilities for this (removing the $k$ marked $L$-elements from the word leaves $(n-1-k)$ elements to place, of which $r$ are unmarked $L$-elements and the remainder $R$-elements).

We then need to insert the $k$ marked elements into this shape. But there are $(r + 1)$ places where we can insert such elements: to the left of any of the $r$ unmarked element, and in the rightmost position(s) of the word. 
Thus the possible insertions into a given ``shape'' are equivalent to placing $k$ unmarked balls into $(r+1)$ ordered boxes, and the result follows by Lemma~\ref{lem:balls_boxes}.
\end{proof}

By combining Proposition~\ref{pro:pm_words_enumeration} and Theorem~\ref{thm:bij_rec_fan_pm_words}, we immediately get the following enumeration formula for $\Rec[F_n]$.

\begin{corollary}\label{cor:fan_rec_enumeration}
Let $n \geq 1$ and $0 \leq k < n$. Then:
$$ \big\vert \{ c \in \Rec[F_n]; \, \level(c) = k \} \big\vert = \sum\limits_{r=0}^{n-k-1} \binom{n-k-1}{r} \cdot \binom{r+k}{r}. $$
\end{corollary}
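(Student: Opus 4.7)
The plan is to derive the formula as an immediate consequence of the two results just established in this subsection, so the proof is essentially a matter of bookkeeping rather than new combinatorics.

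First I would invoke Theorem~\ref{thm:bij_rec_fan_pm_words} to translate the problem from recurrent configurations to properly-marked $\{L,R\}$-words. Since $\Phi_F$ is a bijection from $\Rec[F_n]$ to $\PMW_{n-1}$ that sends $\level(c)$ to the number of marked $L$-elements in $\Phi_F(c)$, the cardinality we want equals the number of words $w \in \PMW_{n-1}$ having exactly $k$ marked $L$-elements.

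Next I would partition this set of properly-marked words according to the number $r$ of unmarked $L$-elements. Since a word in $\PMW_{n-1}$ has length $(n-1)$, the remaining $R$-elements number $(n-1) - k - r$, which forces $r$ to lie in the range $0 \leq r \leq n-k-1$ (and this range is non-empty precisely when $k < n$, as assumed). Applying Proposition~\ref{pro:pm_words_enumeration} to each class yields $\binom{n-k-1}{r} \cdot \binom{r+k}{r}$ words with exactly $k$ marked and $r$ unmarked $L$-elements.

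Summing over the valid values of $r$ gives the claimed identity
\[ \big\vert \{ c \in \Rec[F_n]; \, \level(c) = k \} \big\vert = \sum_{r=0}^{n-k-1} \binom{n-k-1}{r} \cdot \binom{r+k}{r}, \]
completing the proof. There is no genuine obstacle here: the only thing to be careful about is verifying that the summation range matches the constraints $k,r \geq 0$ and $k + r \leq n-1$ required by Proposition~\ref{pro:pm_words_enumeration}, which is immediate from the assumption $0 \leq k < n$.
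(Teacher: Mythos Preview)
Your proposal is correct and follows exactly the approach indicated in the paper: the corollary is stated as an immediate combination of Theorem~\ref{thm:bij_rec_fan_pm_words} and Proposition~\ref{pro:pm_words_enumeration}, and your write-up simply unpacks that combination by partitioning on the number $r$ of unmarked $L$-elements. There is nothing to add.
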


\subsection{Kimberling paths}\label{subsec:kimb}

For $n \geq 1$ and $0 \leq k \leq (n-1)$, the numbers $R(n,k) := \big\vert \{ c \in \Rec[F_n]; \, \level(c) = k \} \big\vert$ of recurrent configurations for the fan graph $F_n$ with level $k$ give the following enumeration triangle.

$$ \begin{array}{lllllll}
1 & & & & & \\
1 & 2 & & & & & \\
1 & 3 & 4 & & & & \\
1 & 4 & 8 & 8 & & & \\
1 & 5 & 13 & 20 & 16 & & \\
1 & 6 & 19 & 38 & 48 & 32 & \\
1 & 7 & 26 & 63 & 104 & 112 & 64 \\
\end{array}
$$

This is (essentially) Sequence A049600 in the OEIS~\cite{OEIS}, and enumerates certain lattice paths. 
Hetyei~\cite{Het} calls a variant of this array the \emph{asymmetric Delannoy} numbers and shows that they are related to a face-enumeration problem associated with Jacobi polynomials. 
Callan~\cite{Cal} calls the lattice paths enumerated by this sequence \emph{Kimberling} paths, after the author of the aforementioned sequence in the OEIS, and we follow this latter terminology here.

\begin{definition}\label{def:kimb}
A \emph{Kimberling path} is a lattice path starting at the origin $(0,0)$ and with steps in $\N \times \Zp$. In other words, Kimberling paths are made of steps with finite non-negative slope (so horizontal rightwards steps are allowed, but vertical upward steps are not). Note that steps in a Kimberling path can have any length. For $i>0, j \geq 0$, we denote $\Kimb[i,j]$ the set of Kimberling paths ending at $(i,j)$. If $0 \leq r \leq i-1$, we denote $\Kimb[i,j,r] \subseteq \Kimb[i,j]$ the set of such paths with $r$ internal vertices (excluding start and end points), equivalently $(r+1)$ steps.
\end{definition}

\begin{figure}[ht]
\centering

\begin{tikzpicture}

\latticegrid
\draw [thick] (0,0)--(2,1);

\begin{scope}[xshift=4.5cm]
\latticegrid
\draw [fill] (1,0) circle [radius=0.1];
\draw (0,0)--(1,0)--(2,1);
\end{scope}

\begin{scope}[xshift=9cm]
\latticegrid
\draw [fill] (1,1) circle [radius=0.1];
\draw (0,0)--(1,1)--(2,1);
\end{scope}

\end{tikzpicture}

\caption{The three paths in $\Kimb[2,1]$. There are three paths in total, one with no internal vertices (left) and two with one internal vertex (middle and right).}

\end{figure}
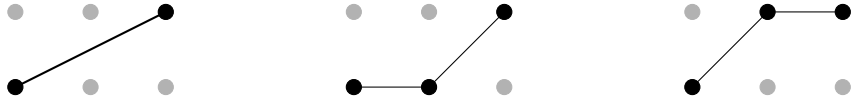

The following gives a simple counting formula for Kimberling paths (see e.g. \cite[Lemma 3.2]{Het}.

\begin{proposition}\label{pro:kimb_enumeration}
Let $i \geq 1, j \geq 0$ and $r \in \{0,\ldots,i-1\}$. Then:
$$ \big\vert \Kimb[i,j,r] \big\vert = \binom{i-1}{r} \cdot \binom{r+j}{r}.$$
\end{proposition}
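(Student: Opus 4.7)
The plan is to use a direct stars-and-bars argument, decoupling the horizontal and vertical components of the steps.

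First I would observe that a Kimberling path with $(r+1)$ steps is entirely specified by the sequence of its step vectors $(a_1,b_1),(a_2,b_2),\ldots,(a_{r+1},b_{r+1})$, where by Definition~\ref{def:kimb} each $a_\ell \in \N$ (strictly positive) and each $b_\ell \in \Zp$ (non-negative). The path lies in $\Kimb[i,j,r]$ exactly when
\[
\sum_{\ell=1}^{r+1} a_\ell = i \quad \text{and} \quad \sum_{\ell=1}^{r+1} b_\ell = j.
\]
Crucially, these two constraints decouple: the $a_\ell$'s and the $b_\ell$'s can be chosen independently of one another.

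Next I would count each factor by a classical composition count (a special case of Lemma~\ref{lem:balls_boxes}). The number of ways to write $i$ as an ordered sum of $(r+1)$ strictly positive integers is $\binom{i-1}{r}$ (equivalently, choose $r$ of the $(i-1)$ gaps between $i$ unit balls to insert dividers). The number of ways to write $j$ as an ordered sum of $(r+1)$ non-negative integers is $\binom{j+r}{r}$ (distribute $j$ unmarked balls into $(r+1)$ ordered boxes). Multiplying these independent counts gives the desired formula $\binom{i-1}{r}\cdot\binom{r+j}{r}$.

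There is essentially no obstacle here: the only subtlety worth flagging is the asymmetry between the horizontal and vertical coordinates (positive versus non-negative), which reflects the fact that steps must make strict horizontal progress but may be purely horizontal. This asymmetry is precisely what produces the asymmetry $\binom{i-1}{r}$ vs.\ $\binom{r+j}{r}$ in the final product. The proof is thus a one-line observation followed by invoking Lemma~\ref{lem:balls_boxes} twice.
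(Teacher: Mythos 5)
Your argument is correct. The one point to note is that the paper does not actually prove this proposition at all: it simply cites it as a known formula (referring to Lemma~3.2 of Hetyei's paper on asymmetric Delannoy numbers), so there is no internal proof to compare against. Your proposal supplies a clean, self-contained justification: a path in $\Kimb[i,j,r]$ is the same data as its sequence of $(r+1)$ step vectors, the horizontal components form a composition of $i$ into $r+1$ \emph{positive} parts (counted by $\binom{i-1}{r}$, or equivalently by Lemma~\ref{lem:balls_boxes} after subtracting one from each part), and the vertical components form a weak composition of $j$ into $r+1$ non-negative parts (counted by $\binom{r+j}{r}$ directly by Lemma~\ref{lem:balls_boxes}); the two choices are independent, so the counts multiply. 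Your remark on the source of the asymmetry between the two binomial factors is exactly right, and the decomposition into step vectors also correctly distinguishes, say, a single step $(2,0)$ from two steps $(1,0),(1,0)$, since these have different numbers of internal vertices and hence live in different sets $\Kimb[i,j,r]$. This is in the same elementary spirit as the paper's other counting arguments (e.g.\ the enumeration of differed Delannoy paths and of properly-marked words), and could serve as a proof of the proposition in place of the external citation.
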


Combined with Proposition~\ref{pro:pm_words_enumeration}, we immediately get the following.

\begin{corollary}\label{cor:pm_words_kimb}
Let $n \geq 1$ and $k,r \geq 0$ such that $k+r \leq n-1$. The following sets both have cardinality $\binom{n-k-1}{r} \cdot \binom{r+k}{r}$.
\begin{enumerate}
\item The set of properly-marked $\{L,R\}$-words of length $(n-1)$ with $k$ marked elements and $r$ unmarked elements.
\item The set of Kimberling paths ending at $(n-k,k)$ with $r$ internal vertices.
\end{enumerate}
\end{corollary}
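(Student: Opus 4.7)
The plan is essentially to invoke the two cardinality formulas we already have in hand and observe they agree. By Proposition~\ref{pro:pm_words_enumeration}, the set in (1) has cardinality exactly $\binom{n-k-1}{r} \cdot \binom{r+k}{r}$. By Proposition~\ref{pro:kimb_enumeration}, applied with $i = n-k$, $j = k$, and the given $r$ (which lies in $\{0,\ldots,n-k-1\} = \{0,\ldots,i-1\}$, so the hypothesis $k + r \leq n - 1$ is exactly what is needed), the set in (2) has cardinality $\binom{n-k-1}{r} \cdot \binom{r+k}{r}$ as well. So the corollary follows immediately by comparing the two expressions; no calculation beyond matching parameters is required.

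If one wants more than a counting equality, a natural second step would be to record a direct bijection between the two families, which in the spirit of the rest of Section~\ref{sec:sandpile_fan} would be preferable. The approach I would take is the one suggested by the shape/insertion decomposition in the proof of Proposition~\ref{pro:pm_words_enumeration}. Given a properly-marked word $w$ of length $(n-1)$ with $k$ marked and $r$ unmarked $L$-elements, first extract its \emph{shape}, the word obtained by deleting all marked elements; this is a word of length $n-1-k$ on $\{L^u, R\}$ with $r$ occurrences of $L^u$. Read this shape left-to-right as a lattice path from $(0,0)$ where $L^u$ is a diagonal step $(1,1)$ (well, $+1$ in $x$, $+1$ in $y$ grouped appropriately) and $R$ is a horizontal step $(1,0)$; the intermediate points at which consecutive steps change direction correspond to the $r$ internal vertices of a Kimberling path ending at $(n-k, r)$. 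The marked elements are then inserted to lift the terminal height from $r$ to $k$, using the $(r+1)$ available insertion slots (each slot to the left of an unmarked element, or at the right end), which translates into distributing the $k$ vertical contributions among the $r+1$ maximal steps of the Kimberling path.

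The main obstacle in making such a bijection rigorous is aligning the conventions: a Kimberling path has steps in $\N \times \Zp$ of arbitrary length, so one must specify how clusters of marked $L$'s and the surrounding unmarked structure translate into step endpoints versus internal vertices. In particular, one must check that the ``properly-marked'' condition (no $L^m$ followed by $R$) corresponds precisely to the Kimberling constraint that no step is purely vertical, and that the count of $r$ unmarked elements matches the count of $r$ internal vertices (equivalently, $r+1$ steps). Once these correspondences are stated carefully, inverting the bijection is straightforward: given a Kimberling path with $r+1$ steps to $(n-k,k)$, read off its $R$-coordinates and $U$-coordinates along each step to reconstruct the shape and then the marker placements.

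For the purposes of the present corollary, however, none of this is necessary: the statement is an equality of cardinalities, and the two propositions already supply closed-form counts that match. I would therefore present the proof as a single line observing this, and flag the bijective construction above as a remark for the interested reader, analogous to Remark~\ref{rem:bij_proof_rec_del}.
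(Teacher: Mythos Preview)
Your proof is correct and matches the paper's approach exactly: the corollary is stated as an immediate consequence of combining Proposition~\ref{pro:pm_words_enumeration} with Proposition~\ref{pro:kimb_enumeration}, and you carry out precisely that substitution of parameters. The additional bijective discussion you sketch goes beyond what the paper does here, and indeed the paper itself flags the absence of such a direct bijection in Remark~\ref{rem:bij_proof_fan}.
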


Combining with the results of Section~\ref{subsec:rec_fan_char_enum} gives us the following result, which was the main objective of this section.

\begin{theorem}\label{thm:rec_fan_kimb}
Let $n \geq 1$ and $0 \leq k \leq n-1$. The following sets are equinumerous.
\begin{enumerate}
\item The set of recurrent configurations $c \in \Rec[F_{n}]$ satisfying $\level(c) = k$.
\item The set of subgraphs $G = (V,E)$ of the path graph $P_n$ such that $n \in V$ and $\vert E \vert = k$.
\item The set $\Kimb[n-k,k]$ of Kimberling paths ending at $(n-k,k)$.
\end{enumerate}
In particular, if $T_{F_n}$ is the Tutte polynomial of the fan graph $F_n$, we have:
$$ T_{F_n}(1,x) = \sum\limits_{k=0}^{n-1} \big\vert \Kimb[n-k,k] \big\vert \cdot x^k. $$
\end{theorem}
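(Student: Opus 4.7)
The plan is to assemble this theorem directly from the machinery already developed in Sections~\ref{subsec:rec_fan_char_enum} and~\ref{subsec:kimb}, since essentially all of the combinatorial work is already done; the statement is really a consolidation result. I would proceed in two stages: first establish the equinumerosity of the three sets (1)--(3), then derive the Tutte polynomial identity as a corollary.

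For the equinumerosity, I would chain together the bijections already proven. Theorem~\ref{thm:rec_fan_subgraph_path} gives a bijection $T \circ \Phi_F$ between (1) and (2) that sends level to number of edges, which immediately identifies sets (1) and (2) with the same cardinality. For the equality of (1) and (3), I would use Theorem~\ref{thm:bij_rec_fan_pm_words} to identify recurrent configurations $c \in \Rec[F_n]$ with $\level(c) = k$ with properly-marked $\{L,R\}$-words in $\PMW_{n-1}$ having $k$ marked $L$-elements. Then, partitioning such words according to their number $r$ of unmarked $L$-elements (which can range over $0 \leq r \leq n-1-k$), Corollary~\ref{cor:pm_words_kimb} identifies the words with $r$ unmarked elements with $\Kimb[n-k,k,r]$. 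Summing over $r$ yields
\[
\big\vert \{ c \in \Rec[F_n]; \, \level(c) = k \} \big\vert = \sum_{r=0}^{n-1-k} \big\vert \Kimb[n-k,k,r] \big\vert = \big\vert \Kimb[n-k,k] \big\vert,
\]
where the last equality uses the fact that the internal-vertex count $r$ partitions $\Kimb[n-k,k]$ into the subsets $\Kimb[n-k,k,r]$. This gives the desired cardinality match with (3).

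For the Tutte polynomial identity, I would invoke Theorem~\ref{thm:Tutte_level}, which states $T_G(1,x) = \mathrm{ASMLevel}_G(x) = \sum_{c \in \ASMRec[G]} x^{\level(c)}$. By Remark~\ref{rem:SR_DR_fan}, since all orientations of $P_n$ are acyclic, we have $\ASMRec[F_n] = \SSMRec[F_n] = \Rec[F_n]$, so
\[
T_{F_n}(1,x) = \sum_{c \in \Rec[F_n]} x^{\level(c)} = \sum_{k=0}^{n-1} \big\vert \{c \in \Rec[F_n]; \, \level(c) = k\} \big\vert \cdot x^k,
\]
and substituting the cardinality $\vert \Kimb[n-k,k] \vert$ from the first part completes the proof.

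There is no substantive obstacle here; the main point to be careful about is the indexing. I would double-check that the range $0 \leq k \leq n-1$ is correct (the configuration $(1,1,\ldots,1)$ has level $0$ and corresponds to $k=0$, while the maximum level is attained by the configuration whose entries sum to $\vert E(F_n) \vert - \dgr[0] = (n-1) + n - n = n-1$, consistent with $k \leq n-1$), and that the empty case $\Kimb[n-k, k]$ for $k = n-1$ indeed contains exactly the paths to $(1, n-1)$, which agrees with the single recurrent configuration where every non-sink vertex is saturated subject to stability. The rest is pure bookkeeping.
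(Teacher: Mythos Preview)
Your proof is correct and follows essentially the same approach as the paper, which simply states that the theorem follows by combining the results of Section~\ref{subsec:rec_fan_char_enum} with Corollary~\ref{cor:pm_words_kimb}; you have just made the chaining of bijections and the summation over $r$ explicit. One small slip in your parenthetical sanity check: the quantity $\vert E(F_n)\vert - \dgr[0] = (2n-1) - n = n-1$ is the \emph{minimum} total number of grains in a recurrent configuration (giving level $0$), not the sum at maximum level; the maximum level is $n-1$ because the maximal stable configuration has total $2(n-2)+1+1 = 2n-2$, and $(2n-2) - (n-1) = n-1$, so your conclusion is right but the reasoning is reversed.
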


As far as we can tell, as in the wheel graph case, this specification of the Tutte polynomial for fan graphs has not been noted anywhere in the literature. Note that, unlike in the wheel graph case, we can give an explicit formula for the coefficients here:
$$ \big\vert \Kimb[n-k,k] \big\vert = \sum\limits_{r=0}^{n-k-1} \binom{n-k-1}{r} \cdot \binom{r+k}{r}. $$

\begin{remark}\label{rem:interpret_rec_fan_unmarked_elements}
As in the wheel graph case, it is possible to give an interpretation of the parameter $r$ in terms of certain $2\{1,2\}^*0$-chains of the corresponding recurrent configuration, with suitable precautions taken to deal with the end vertices $1$ and $n$.
\end{remark}

\begin{remark}\label{rem:bij_proof_fan}
As in Section~\ref{sec:delannoy}, our proof of Theorem~\ref{thm:rec_fan_kimb} is not a direct bijection. Instead it relies on the fact that both objects are enumerated by the same formula (a product of binomial coefficients in this case). It would be useful to exhibit a direct bijection from either recurrent configurations on $F_n$ or properly-marked $\{L,R\}$-words to the Kimberling paths.
\end{remark}


\section{Conclusion and future work}\label{sec:conc}

In this paper, we have studied the sandpile model on wheel and fan graphs, focusing on bijective characterisations of the recurrent configurations. These results are in the same spirit as a growing list of similar combinatorial characterisations on other graph families (see the list in Section~\ref{sec:intro} for more details). 

In the case of the wheel graph $W_n$, it proved helpful to consider a stochastic variant of the standard Abelian sandpile model (ASM). We described (Theorem~\ref{thm:bij_rec_subgraph}) a bijection between the recurrent configurations and the set of non-empty subgraphs of the cycle graph $C_n$. This bijection maps the level of a configuration to the number of edges of the subgraph, and a statistic called the $01*$-weight of the configuration to the number of vertices absent from the subgraph. The tools used included a bijection between \emph{minimal} recurrent configurations on the wheel graph and orientations of the cycle, which we enriched to a bijection between all recurrent configurations and \emph{properly-marked} orientations of the cycle.

We then considered the special case where the wheel graph has an even number of vertices $2n$. We showed (Theorem~\ref{thm:delannoy}) that the recurrent configurations with level $n$ are equinumerous with certain lattice paths called \emph{differed Delannoy paths}. In this case, the $01*$-weight corresponds to the number of diagonal steps in the path. However, our proof of this result is non-bijective, relying instead on a variety of enumerative arguments. In the future, it would be useful to find a direct bijective proof of this result.

In the case of the fan graph $F_n$, we described (Theorem~\ref{thm:rec_fan_subgraph_path}) a bijection between the recurrent configurations and subgraphs of the path graph $P_n$ whose vertex set contains the right-most vertex of the path. The bijection again maps the level of the recurrent configuration to the number of edges in the corresponding subgraph. The initial tools used are similar to the wheel graph case, relying on enriching a bijection between minimal recurrent configurations on the fan graph and orientations of the path graph, which yields a bijection between all the recurrent configurations and objects called \emph{properly-marked words}. The bijection to the subgraphs of $P_n$ follows naturally. We then show that these sets are equinumerous with another family of lattice paths, which we called \emph{Kimberling paths}, named after the author of the corresponding entry in the OEIS~\cite{OEIS}. As in the wheel graph case, this proof is non-bijective, and in the future it would be useful to find a direct bijection instead.

Because of the well-known link between the enumeration of recurrent configurations for the ASM according to their level, and the ubiquitous polynomial (Theorem~\ref{thm:Tutte_level}), these results yield specialisations of the Tutte polynomial of wheel and fan graphs. It would be interesting to investigate whether the bijective correspondences exhibited here can be enriched in some way to account for the second variable of the Tutte polynomial. This would potentially yield simpler and more explicit formulae for the Tutte polynomial of wheel and fan graphs than those that are currently known.

\section*{Acknowledgments}

The research leading to these results received funding from the National Natural Science Foundation of China (NSFC) under Grant Agreement No 12101505. In addition, we would like to thank the two anonymous referees for their many helpful comments and corrections, as well as the suggestion to look into subgraph characterisations of the recurrent configurations in the fan graph case.


\bibliographystyle{plain}
\bibliography{sandpile_wheel_bibliography}

\begin{thebibliography}{10}

\bibitem{AD}
Amal Alofi and Mark Dukes.
\newblock Parallelogram polyominoes and rectangular ew-tableaux:
  Correspondences through the sandpile model.
\newblock {\em Enumer. Combin. Appl.}, 1(1):Art--S2R8, 2018.

\bibitem{AADB}
Jean-Christophe {Aval}, Michele {D'Adderio}, Mark {Dukes}, and Yvan {Le
  Borgne}.
\newblock {Two operators on sandpile configurations, the sandpile model on the
  complete bipartite graph, and a cyclic lemma}.
\newblock {\em {Adv. Appl. Math.}}, 73:59--98, 2016.

\bibitem{BTW1}
Per {Bak}, Chao {Tang}, and Kurt {Wiesenfeld}.
\newblock {Self-organized criticality: An explanation of the 1/f noise}.
\newblock {\em {Phys. Rev. Lett.}}, 59:381--384, 1987.

\bibitem{BTW2}
Per {Bak}, Chao {Tang}, and Kurt {Wiesenfeld}.
\newblock {Self-organized criticality}.
\newblock {\em {Phys. Rev. A (3)}}, 38(1):364--374, 1988.

\bibitem{Baker}
Matthew {Baker} and Serguei {Norine}.
\newblock {Riemann-Roch and Abel-Jacobi theory on a finite graph}.
\newblock {\em {Adv. Math.}}, 215(2):766--788, 2007.

\bibitem{Band}
Cyril {Banderier} and Sylviane {Schwer}.
\newblock {Why Delannoy numbers?}
\newblock {\em {J. Stat. Plann. Inference}}, 135(1):40--54, 2005.

\bibitem{Ber}
Olivier {Bernardi}.
\newblock {Tutte polynomial, subgraphs, orientations and sandpile model: new
  connections via embeddings}.
\newblock {\em {Electron. J. Comb.}}, 15(1):research paper r109, 53, 2008.

\bibitem{BDS}
N.~L. {Biggs}, R.~M. {Damerell}, and D.~A. {Sands}.
\newblock {Recursive families of graphs}.
\newblock {\em {J. Comb. Theory, Ser. B}}, 12:123--131, 1972.

\bibitem{Biggs1}
Norman {Biggs}.
\newblock {Chip-firing and the critical group of a graph}.
\newblock {\em {J. Algebr. Comb.}}, 9(1):25--45, 1999.

\bibitem{Biggs2}
Norman {Biggs}.
\newblock {The Tutte polynomial as a growth function}.
\newblock {\em {J. Algebr. Comb.}}, 10(2):115--133, 1999.

\bibitem{BLS}
Anders Bj\"orner, L\'aszl\'o Lov\'asz, and Peter~W. Shor.
\newblock Chip-firing games on graphs.
\newblock {\em {Eur. J. Comb.}}, 12(4):283--291, 1991.

\bibitem{BMM}
C.~{Brennan}, T.~{Mansour}, and E.~{Mphako-Banda}.
\newblock {Tutte polynomials of wheels via generating functions}.
\newblock {\em {Bull. Iran. Math. Soc.}}, 39(5):881--891, 2013.

\bibitem{Cal}
David Callan.
\newblock Some bijections for lattice paths, 2021.

\bibitem{CMS}
Yao-Ban {Chan}, Jean-Fran\c{c}ois {Marckert}, and Thomas {Selig}.
\newblock {A natural stochastic extension of the sandpile model on a graph}.
\newblock {\em {J. Comb. Theory, Ser. A}}, 120(7):1913--1928, 2013.

\bibitem{Chen}
Haiyan Chen and Bojan Mohar.
\newblock The sandpile group of polygon rings and twisted polygon rings, 2020.

\bibitem{CLB}
Robert {Cori} and Yvan {Le Borgne}.
\newblock {The sand-pile model and Tutte polynomials}.
\newblock {\em {Adv. Appl. Math.}}, 30(1-2):44--52, 2003.

\bibitem{CorPou}
Robert {Cori} and Dominique {Poulalhon}.
\newblock {Enumeration of \((p,q)\)-parking functions}.
\newblock {\em {Discrete Math.}}, 256(3):609--623, 2002.

\bibitem{Cori}
Robert {Cori} and Dominique {Rossin}.
\newblock {On the sandpile group of dual graphs}.
\newblock {\em {Eur. J. Comb.}}, 21(4):447--459, 2000.

\bibitem{ALB}
Michele D'Adderio and Yvan Le~Borgne.
\newblock The sandpile model on $ k\_m, n $ and the rank of its configurations.
\newblock {\em S{\'e}m. Loth. Comb.}, 77:Art--B77h, 2018.

\bibitem{DFF}
Arnaud {Dartois}, Francesca {Fiorenzi}, and Paolo {Francini}.
\newblock {Sandpile group on the graph \(\mathcal D_n\) of the dihedral group}.
\newblock {\em {Eur. J. Comb.}}, 24(7):815--824, 2003.

\bibitem{Dhar1}
Deepak {Dhar}.
\newblock {Self-organized critical state of sandpile automaton models}.
\newblock {\em {Phys. Rev. Lett.}}, 64(14):1613--1616, 1990.

\bibitem{Dhar}
Deepak Dhar.
\newblock Theoretical studies of self-organized criticality.
\newblock {\em Physica A}, 369(1):29--70, 2006.
\newblock Fundamental Problems in Statistical Physics.

\bibitem{Dukes}
Mark {Dukes}.
\newblock {The sandpile model on the complete split graph, Motzkin words, and
  tiered parking functions}.
\newblock {\em {J. Comb. Theory, Ser. A}}, 180:15, 2021.
\newblock Id/No 105418.

\bibitem{DLB}
Mark {Dukes} and Yvan {Le Borgne}.
\newblock {Parallelogram polyominoes, the sandpile model on a complete
  bipartite graph, and a \(q,t\)-Narayana polynomial}.
\newblock {\em {J. Comb. Theory, Ser. A}}, 120(4):816--842, 2013.

\bibitem{DSSS2}
Mark {Dukes}, Thomas {Selig}, Jason~P. {Smith}, and Einar {Steingr\'{\i}msson}.
\newblock {Permutation graphs and the abelian sandpile model, tiered trees and
  non-ambiguous binary trees}.
\newblock {\em {Electron. J. Comb.}}, 26(3):research paper p3.29, 25, 2019.

\bibitem{DSSS1}
Mark {Dukes}, Thomas {Selig}, Jason~P. {Smith}, and Einar {Steingr\'{\i}msson}.
\newblock {The abelian sandpile model on Ferrers graphs -- a classification of
  recurrent configurations}.
\newblock {\em {Eur. J. Comb.}}, 81:221--241, 2019.

\bibitem{GZ}
Curtis {Greene} and Thomas {Zaslavsky}.
\newblock {On the interpretation of Whitney numbers through arrangements of
  hyperplanes, zonotopes, non-Radon partitions, and orientations of graphs}.
\newblock {\em {Trans. Am. Math. Soc.}}, 280:97--126, 1983.

\bibitem{Het}
G\'abor {Hetyei}.
\newblock {Central Delannoy numbers and balanced Cohen-Macaulay complexes}.
\newblock {\em {Ann. Comb.}}, 10(4):443--462, 2006.

\bibitem{OEIS}
The OEIS~Foundation Inc.
\newblock {The On-line Encyclopedia of Integer Sequences}, 2022.

\bibitem{KW}
Seungki {Kim} and Yuntao {Wang}.
\newblock {A stochastic variant of the abelian sandpile model}.
\newblock {\em {J. Stat. Phys.}}, 178(3):711--724, 2020.

\bibitem{Kliv}
Caroline~J. {Klivans}.
\newblock {\em {The mathematics of chip-firing}}.
\newblock Boca Raton, FL: CRC Press, 2019.

\bibitem{Manna}
S~S Manna.
\newblock Two-state model of self-organized criticality.
\newblock {\em Journal of Physics A: Mathematical and General},
  24(7):L363--L369, apr 1991.

\bibitem{Mer}
Criel Merino.
\newblock {Chip firing and the Tutte polynomial}.
\newblock {\em {Ann. Comb.}}, 1(3):253--259, 1997.

\bibitem{Nunzi}
François Nunzi.
\newblock On the abelianity of the stochastic sandpile model, 2016.

\bibitem{Raza2}
Zahid {Raza}, Mohammed M.~M. {Jaradat}, Mohammed~S. {Bataineh}, and Faiz
  {Ullah}.
\newblock {On the sandpile model of modified wheels. II}.
\newblock {\em {Open Math.}}, 18:1531--1539, 2020.

\bibitem{Raza1}
Zahid {Raza}, Saleha {Tariq}, and M.~Tariq {Rahim}.
\newblock {Sandpile model on subdivided wheels \(W_{n,l}\)}.
\newblock {\em {Util. Math.}}, 105:291--302, 2017.

\bibitem{DharSad}
Tridib {Sadhu} and Deepak {Dhar}.
\newblock {Steady state of stochastic sandpile models}.
\newblock {\em {J. Stat. Phys.}}, 134(3):427--441, 2009.

\bibitem{Schulz}
Matthias {Schulz}.
\newblock {Minimal recurrent configurations of chip firing games and directed
  acyclic graphs}.
\newblock In {\em {Automata 2010. Selected papers based on the presentations at
  the 16th international workshop on cellular automata (CA) and discrete
  complex systems (DCS), Nancy, France, June 14--16, 2010}}, pages 111--124.
  Nancy: The Association. Discrete Mathematics \& Theoretical Computer Science
  (DMTCS), 2010.

\bibitem{SSS}
Thomas {Selig}, Jason~P. {Smith}, and Einar {Steingr\'{\i}msson}.
\newblock {EW-tableaux, Le-tableaux, tree-like tableaux and the abelian
  sandpile model}.
\newblock {\em {Electron. J. Comb.}}, 25(3):research paper p3.14, 32, 2018.

\bibitem{Sul}
Robert~A. {Sulanke}.
\newblock {Objects counted by the central Delannoy numbers}.
\newblock {\em {J. Integer Seq.}}, 6(1):art. 03.1.5, 19, 2003.

\bibitem{Zhou}
Yufang {Zhou} and Haiyan {Chen}.
\newblock {The sandpile group of a family of nearly complete graphs}.
\newblock {\em {Bull. Malays. Math. Sci. Soc. (2)}}, 44(2):625--637, 2021.

\end{thebibliography}

\end{document}